\def\@adminfootnotes{%
  \let\@makefnmark\relax  \let\@thefnmark\relax
  \ifx\@empty\@date\else \@footnotetext{\@setdate}\fi
  \ifx\@empty\@subjclass\else \@footnotetext{\@setsubjclass}\fi
  \ifx\@empty\@keywords\else \@footnotetext{\@setkeywords}\fi
  \ifx\@empty\thankses\else \@footnotetext{%
    \def\par{\let\par\@par}\@setthanks}%
  \fi
}
\newtheorem{theorem}{Theorem}[section]
\newtheorem{prop}[theorem]{Proposition}
\newtheorem{corollary}[theorem]{Corollary}
\theoremstyle{definition}
\newtheorem{example}[theorem]{Example}
\theoremstyle{remark}
\newtheorem{remark}[theorem]{Remark}
\numberwithin{equation}{section}
\let\oldtocsection=\tocsection
\let\oldtocsubsection=\tocsubsection
\renewcommand{\tocsection}[2]{\hspace{0em}\oldtocsection{#1}{#2}}
\renewcommand{\tocsubsection}[2]{\hspace{1em}\oldtocsubsection{#1}{#2}}
\title[TQFT and sofic systems]{Boolean TQFTs with accumulating defects, sofic systems, and automata for infinite words}
\author{Paul Gustafson}
\address{The Ambrus Group, New York, NY 10036, USA}
\email{\href{mailto:paul.gustafson@theambrusgroup.com}{paul.gustafson@theambrusgroup.com}}
\author{Mee Seong Im}
\address{Department of Mathematics, United States Naval Academy, Annapolis, MD 21402, USA}
\email{\href{mailto:meeseongim@gmail.com}{meeseongim@gmail.com}}
\author{Mikhail Khovanov}
\address{Department of Mathematics, Columbia University, New York, NY 10027, USA}
\email{\href{mailto:khovanov@math.columbia.edu}{khovanov@math.columbia.edu}}
\address{Department of Mathematics, Johns Hopkins University, Baltimore, MD 21218, USA}
\email{\href{mailto:khovanov@jhu.edu}{khovanov@jhu.edu}}
\subjclass[2020]{Primary 57K16, 68Q45, 18M05, 37B10;
Secondary 06A12, 68Q70, 18B20}
\date{December 25, 2023}
\providecommand{\keywords}[1]{\textbf{\textit{Key words and phrases.}} #1}
\keywords{Regular language, automaton, topological theory, universal construction,  TQFT, Boolean semiring,  semilattice, semimodule, sofic system, symbolic dynamics, $\omega$-automata, infinite automata.}
\begin{document}

\def\ac{\mathsf{ac}}
\def\concatenate{\mathsf{concatenate}}
\def\gen{\mathsf{generators}}
\def\init{\mathsf{in}}
\def\t{\mathsf{t}}
\def\out{\mathsf{out}}
\def\Left{\mathsf{L}}
\def\Right{\mathsf{R}}
\def\I{\mathsf I}
\def\R{\mathbb R}
\def\Q{\mathbb Q}
\def\Z{\mathbb Z}
\def\mc{\mathcal{c}}
\def\finite{\mathsf{finite}}
\def\infinite{\mathsf{infinite}}
\def\N{\mathbb N} 
\def\C{\mathbb C}
\def\S{\mathbb S}
\def\SS{\mathbb S} 
\def\CP{\mathbb P}
\def\Ob{\mathsf{Ob}}
\def\op{\mathsf{op}}
\def\new{\mathsf{new}}
\def\old{\mathsf{old}}
\def\rat{\mathsf{rat}}
\def\rec{\mathsf{rec}}
\def\tail{\mathsf{tail}}
\def\coev{\mathsf{coev}}
\def\ev{\mathsf{ev}}
\def\id{\mathsf{id}}
\def\s{\mathsf{s}}
\def\t{\mathsf{t}}
\def\start{\textsf{starting}}
\def\Notation{\textsf{Notation}}
\def\circleft{\raisebox{-.68ex}{\scalebox{1}[3.00]{\rotatebox[origin=c]{180}{$\curvearrowright$}}}}
\renewcommand\SS{\ensuremath{\mathbb{S}}}
\newcommand{\kllS}{\kk\llangle  S \rrangle} 
\newcommand{\kllSS}[1]{\kk\llangle  #1 \rrangle}
\newcommand{\klS}{\kk\langle S\rangle}  
\newcommand{\aver}{\mathsf{av}}  
\newcommand{\ophana}{\overline{\phantom{a}}}
\newcommand{\Bool}{\mathbb{B}}
\newcommand{\dmod}{\mathsf{-mod}}
\newcommand{\lang}{\mathsf{lang}}
\newcommand{\pfmod}{\mathsf{-pfmod}}
\newcommand{\primitive}{\mathsf{irr}}
\newcommand{\Bmod}{\Bool\mathsf{-mod}}  
\newcommand{\Bmodo}[1]{\Bool_{#1}\mathsf{-mod}}  
\newcommand{\Bfmod}{\Bool\mathsf{-fmod}} 
\newcommand{\Bfpmod}{\Bool\mathsf{-fpmod}} 
\newcommand{\Bfsmod}{\Bool\mathsf{-}\underline{\mathsf{fmod}}}  
\newcommand{\undvar}{\underline{\varepsilon}} 
\newcommand{\RLang}{\mathsf{RLang}}
\newcommand{\undotimes}{\underline{\otimes}}
\newcommand{\sigmaacirc}{\Sigma^{\ast}_{\circ}} 
\newcommand{\cl}{\mathsf{cl}}
\newcommand{\PP}{\mathcal{P}} 
\newcommand{\wedgezero}{\{ \vee ,0\} } 
\newcommand{\whA}{\widehat{A}}
\newcommand{\whC}{\widehat{C}}
\newcommand{\whM}{\widehat{M}}
\newcommand{\Sigmalr}{\Sigma^{\Z}}
\newcommand{\Sigmal}{\Sigma^{\t}}
\newcommand{\Sigmar}{\Sigma^{\h}}
\newcommand{\Sigmaa}{\Sigma^{\ast}}
\newcommand{\SigmaZ}{\Sigma^{\Z}}  
\newcommand{\Sigmac}{\Sigma^{\circ}}
\newcommand{\mcF}{\mathcal{F}} 
\newcommand{\mInf}{\mathsf{Inf}}
\newcommand{\mcCinfS}{\Cob^{\infty}_{\Sigma}}

\newcommand{\alphai}{\alpha_I}  
\newcommand{\alphac}{\alpha_{\circ}}  
\newcommand{\alphap}{(\alphai,\alphac)} 
\newcommand{\alphalr}{\alpha_{\leftrightarrow}}
\newcommand{\alphaZ}{\alpha_{\Z}}
\newcommand{\mcCinfalpha}{\Cob^{\infty}_{\alpha}}

\let\oldemptyset\emptyset
\let\emptyset\varnothing

\newcommand{\undempty}{\underline{\emptyset}}
\def\basis{\mathsf{basis}}
\def\irr{\mathsf{irr}} 
\def\spanning{\mathsf{spanning}}
\def\elmt{\mathsf{elmt}}

\def\f{\mathsf{f}}
\def\h{\mathsf{h}}
\def\t{\mathsf{t}}
\def\l{\lbrace}
\def\r{\rbrace}
\def\o{\otimes}
\def\lra{\longrightarrow}
\def\Ext{\mathsf{Ext}}
\def\mf{\mathfrak} 
\def\mcC{\mathcal{C}}
\def\mcO{\mathcal{O}}
\def\Fr{\mathsf{Fr}}

\def\ovb{\overline{b}}
\def\tr{{\sf tr}} 
\def\det{{\sf det }} 
\def\tral{\tr_{\alpha}}
\def\one{\mathbf{1}}   

\def\lra{\longrightarrow}
\def\twoheadlra{\longrightarrow\hspace{-4.6mm}\longrightarrow}
\def\hooklra{\raisebox{.2ex}{$\subset$}\!\!\!\raisebox{-0.21ex}{$\longrightarrow$}}
\def\kk{\mathbf{k}}  
\def\gdim{\mathsf{gdim}}  
\def\rk{\mathsf{rk}}
\def\undep{\underline{\epsilon}}
\def\mathM{\mathbf{M}}  


\def\complement{\mathsf{comp}}

\def\Cob{\mathcal{C}} 
\def\Kar{\mathsf{Kar}}   

\def\dmod{\mathsf{-mod}}   
\def\pmod{\mathsf{-pmod}}    

\newcommand{\brak}[1]{\ensuremath{\left\langle #1\right\rangle}}
\newcommand{\oplusop}[1]{{\mathop{\oplus}\limits_{#1}}}
\newcommand{\ang}[1]{\langle #1 \rangle } 
\newcommand{\ppartial}[1]{\frac{\partial}{\partial #1}} 

\newcommand{\mcA}{{\mathcal A}}
\newcommand{\cZ}{{\mathcal Z}}
\newcommand{\sq}{$\square$}
\newcommand{\bi}{\bar \imath}
\newcommand{\bj}{\bar \jmath}

\newcommand{\undn}{\mathbf{n}}
\newcommand{\undm}{\mathbf{m}}
\newcommand{\cob}{\mathsf{cob}} 
\newcommand{\comp}{\mathsf{comp}} 

\newcommand{\Aut}{\mathsf{Aut}}
\newcommand{\Hom}{\mathsf{Hom}}
\newcommand{\Idem}{\mathsf{Idem}}
\newcommand{\Ind}{\mbox{Ind}}
\newcommand{\Id}{\textsf{Id}}
\newcommand{\End}{\mathsf{End}}
\newcommand{\iHom}{\underline{\mathsf{Hom}}}
\newcommand{\omcC}{\overline{\Cob}}

\newcommand{\drawing}[1]{
\begin{center}{\psfig{figure=fig/#1}}\end{center}}

\def\MS#1{{\color{blue}[MS: #1]}}
\def\MK#1{{\color{red}[MK: #1]}}
\def\PG#1{{\color{magenta}[PG: #1]}}

\begin{abstract} 
Any finite state automaton gives rise to a Boolean one-dimensional TQFT with defects and inner endpoints of cobordisms. This paper extends the correspondence to Boolean TQFTs where defects accumulate toward inner endpoints, relating such TQFTs and topological theories to sofic systems and $\omega$-automata.
\end{abstract}

\maketitle
\tableofcontents

%
%

\section{Introduction}
\label{section:intro}

This paper continues a study of Boolean one-dimensional TQFTs and topological theories, complementing the papers~\cite{IK-top-automata,IKV23,GIKKL23}, see also~\cite{IK_TQFTjourney23} (and \cite{IK_22_linear,IKV23,IZ} for the corresponding theories over a field). We consider one-dimensional cobordisms decorated by point defects labelled by elements of a finite set $\Sigma$ and allow defects to accumulate toward inner boundary points of a cobordism. Near an inner boundary point, a cobordism is encoded by a semi-infinite sequence of labels $a_1a_2a_3\cdots$, $a_i\in \Sigma$. We then study Boolean topological theories and TQFTs for two versions of this cobordism category and link them to sofic systems in Section~\ref{section:sofic-systems}  and to automata on infinite words ($\omega$-automata) in Section~\ref{sec_one_side}. Examples are worked out in Sections~\ref{sec_examples_sofic} and~\ref{sec_examples_automata}. 

Section~\ref{sec_review} contains a brief review of the relation between automata and one-dimensional Boolean TQFTs as well as that of the universal construction for Boolean topological theories in one dimension. 
In Section~\ref{section:sofic-systems} we consider the category of one-dimensional oriented cobordisms with $\Sigma$-labelled defects and inner endpoints with the defects accumulating towards each inner endpoint. We relate Boolean TQFTs and topological theories for this cobordism category to sofic systems in  
Theorems~\ref{theorem:regular_sofic} and~\ref{thm_inficirc}. Section~\ref{sec_examples_sofic} contains examples for this correspondence. 


\vspace{0.07in} 

In Section~\ref{sec_one_side} 
we consider a variation of the 
cobordism category, 
where defects only accumulate towards inner endpoints with a particular orientation.  Boolean TQFTs for this category are provided, in particular, by $\omega$-automata, which are various types of automata on semi-infinite words, including B\"uchi and Muller automata. Examples of TQFTs associated to such automata are provided in Section~\ref{sec_examples_automata}.

\vspace{0.07in}

{\bf Acknowledgements.} 
The second and the third author would like to thank retreat-style ``Merging Categorification, Gauge Theory, and Physics" conference at the Swiss Alps in Switzerland for a productive working atmosphere. P.G. was supported by AFRL grant FA865015D1845 (subcontract 669737-1) and ONR grant N00014-16-1-2817, a Vannevar Bush Fellowship held by Dan Koditschek and sponsored by the Basic Research Office of the Assistant Secretary of Defense for Research and Engineering. M.K. was partially supported by NSF grant DMS-2204033 and Simons Collaboration Award 994328 while working on this paper. 

%
%

\section{A review of the automata and Boolean TQFT correspondence}\label{sec_review}


\subsection{One-dimensional Boolean TQFT and finite state automata}
\label{subsec_qreview}

\quad 

A one-dimensional oriented topological quantum field theory (TQFT) is a symmetric monoidal functor 
\[
\mcF \ : \ \Cob \lra R\mathsf{-mod}
\]
from the category $\Cob$ of oriented one-dimensional cobordisms to the category $R\mathsf{-mod}$ of modules over a commutative ring $R$.
 It is determined by its value $\mcF(+)\cong P$ on a positive-oriented  
$0$-manifold (on a point labelled $+$), and that value can be any finitely-generated projective $R$-module $P$. This module determines the theory up to isomorphism, with 
\[\mcF(-)\, \cong \, P^{\ast}:=\Hom_R(P,R), 
\]   
and \emph{cup} and \emph{cap} morphisms, see Figure~\ref{figure-2}, given by coevaluation and evaluation morphisms between $R$-modules $R$,  $P\otimes P^{\ast}$ and $P^{\ast}\otimes P$. This gives a classification of one-dimensional topological theories. 

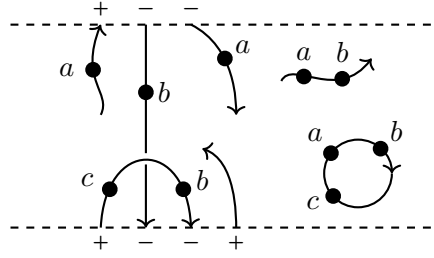
\begin{figure}
    \centering
\begin{tikzpicture}[scale=0.6]
\begin{scope}[shift={(0,0)}]
\draw[thick,dashed] (-1.5,4.5) -- (8.0,4.5);
\draw[thick,dashed] (-1.5,0) -- (8.0,0);

\node at (0.5,-0.35) {$+$};
\node at (1.5,-0.35) {$-$};
\node at (2.5,-0.35) {$-$};
\node at (3.5,-0.35) {$+$};

\node at (0.5,4.85) {$+$};
\node at (1.5,4.85) {$-$};
\node at (2.5,4.85) {$-$};

\draw[thick,->] (0.5,0) .. controls (0.6,2) and (2.4,2) .. (2.5,0);
\draw[thick,fill] (0.85,0.85) arc (0:360:1.5mm);
\node at (0.20,1.05) {$c$};
\draw[thick,fill] (2.48,0.85) arc (0:360:1.5mm);
\node at (2.75,1.05) {$b$};

\draw[thick,<-] (0.5,4.5) .. controls (0,3.25) and (0.75,3.0) .. (0.5,2.5);
\draw[thick,fill] (0.48,3.5) arc (0:360:1.5mm); 
\node at (-0.25,3.5) {$a$};

%
%
%
%

\draw[thick] (1.5,4.5) -- (1.5,1.65);
\draw[thick,->] (1.5,1.4) -- (1.5,0);
\draw[thick,fill] (1.65,3.0) arc (0:360:1.5mm); 
\node at (1.9,3.0) {$b$};

\draw[thick,->] (2.5,4.5) .. controls (3.5,4) and (3.5,2.75) .. (3.5,2.5);
\draw[thick,fill] (3.40,3.75) arc (0:360:1.5mm); 
\node at (3.65,4.0) {$a$};

\draw[thick,<-] (2.75,1.75) .. controls (3.5,1.50) and (3.5,0.25) .. (3.5,0);

\draw[thick,->] (4.5,3.25) .. controls (4.75,3.75) and (5.75,2.75) .. (6.50,3.75);
\draw[thick,fill] (5.15,3.35) arc (0:360:1.5mm);
\node at (5,3.85) {$a$};
\draw[thick,fill] (6.00,3.30) arc (0:360:1.5mm);
\node at (5.85,3.90) {$b$};

\draw[thick,<-] (6.95,1.2) arc (0:360:0.75); 
\draw[thick,fill] (6.85,1.75) arc (0:360:1.5mm);
\node at (7.05,2.15) {$b$};
\draw[thick,fill] (5.75,1.65) arc (0:360:1.5mm);
\node at (5.25,2.05) {$a$};
\draw[thick,fill] (5.80,0.70) arc (0:360:1.5mm);
\node at (5.20,0.55) {$c$};

\end{scope}

\end{tikzpicture}
    \caption{A morphism from $(+--+)$ to $(+--)$ in the category $\Cob_{\Sigma}$. Labels $a,b,c$ are elements of $\Sigma$. This morphism has seven outer and five inner boundary points. Objects of $\Cob_{\Sigma}$ are sign sequences. Outer endpoints induce sequences $(+--+)$ and $(+--)$ of orientations at the bottom and top boundary of the cobordism, which are the source and target objects of this morphism.  The morphism has two floating components, one interval and one circle, which define a word $ab$ and a circular word $cab$, respectively.  }
    \label{figure-1}
\end{figure}

To enrich the theory, one introduces defects (dots sliding along components of a cobordism) labelled by elements of a finite set $\Sigma$ and allow one-manifolds to end ``inside'' the cobordism, see Figure~\ref{figure-1}. Endpoints of components of a cobordism that end ``inside'' the cobordism are called \emph{inner}, while those that end on the ``boundary'' of a cobordism are called \emph{outer}. The resulting category $\Cob_{\Sigma}$ is symmetric monoidal and isomorphism classes of TQFTs for it, that is, symmetric monoidal functors 
\[
\mcF \ : \ \Cob_{\Sigma} \lra R\mathsf{-mod}, 
\]
are classified by isomorphism classes of data $(P,\{m_a\}_{a\in \Sigma},v_0,v^{\ast})$, where $P=\mcF(+)$ is a finitely-generated projective $P$-module, $m_a:P\lra P$ is an endomorphism of $P$ for each $a\in \Sigma$, $v_0\in P$ and $v^{\ast}:P\lra R$ is an $R$-module map~\cite{GIKKL23,IK_TQFTjourney23}.

Furthermore, one can replace a commutative ring $R$ by a commutative semiring, with the same classification of TQFTs as for rings. A projective finitely-generated $R$-semimodule $P$ is then defined as a retract of a free semimodule, with semimodule maps $P\stackrel{\iota}{\lra} R^n\stackrel{p}{\lra} P$ such that $p\circ \iota=\id_P$.

All of that is explained in~\cite{GIKKL23}, which then specializes to the Boolean semiring $R=\Bool=\{0,1|1+1=1\}$ and considers TQFTs (symmetric monoidal functors) 
\begin{equation}\label{eq_TQFT_B}
\mcF \ : \ \Cob_{\Sigma}\lra \Bool\mathsf{-fmod}
\end{equation}
from this category of $\Sigma$-decorated one-dimensional cobordisms to  the category of free $\Bool$-modules. The state space $\mcF(+)\cong \Bool^n$ of a $+$ point is necessarily of finite rank, and the free $\Bool$-module $\mcF(+)$ has a unique basis, up to permutations of terms. Writing $\mcF(+)\cong \Bool Q$, for a finite set $Q$, allows to associate to the functor $\mcF$ a nondeterministic finite state automaton with $\mcF$ on the set of states $Q$ as follows.  

Note that  the powerset $\mathcal{P}(Q)$ with the union of sets as the addition operation can be identified with the $\Bool$-semimodule $\Bool Q$ by taking a subset $S\subset Q$ to be the sum $\sum_{s\in S}s \in \Bool Q$. 

  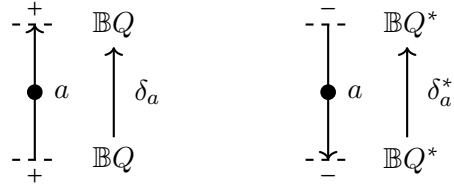
\begin{figure}
    \centering
\begin{tikzpicture}[scale=0.6]
\begin{scope}[shift={(0,0)}]
\node at (0.5, 3.35) {$+$};
\node at (0.5,-0.35) {$+$};
\draw[thick,dashed] (0,3) -- (1,3);
\draw[thick,dashed] (0,0) -- (1,0);
\draw[thick,->] (0.5,0) -- (0.5,3); 
\draw[thick,fill] (0.65,1.5) arc (0:360:1.5mm);
\node at (1.1,1.5) {$a$};

\node at (2.25,3) {$\Bool Q$};
\draw[thick,<-] (2.25,2.5) -- (2.25,0.5);
\node at (3,1.5) {$\delta_a$};
\node at (2.25,0) {$\Bool Q$};
\end{scope}

\begin{scope}[shift={(6.5,0)}]
\node at (0.5, 3.35) {$-$};
\node at (0.5,-0.35) {$-$};
\draw[thick,dashed] (0,3) -- (1,3);
\draw[thick,dashed] (0,0) -- (1,0);
\draw[thick,<-] (0.5,0) -- (0.5,3); 
\draw[thick,fill] (0.65,1.5) arc (0:360:1.5mm);
\node at (1.1,1.5) {$a$};

\node at (2.35,3) {$\Bool Q^*$};
\draw[thick,<-] (2.25,2.5) -- (2.25,0.5);
\node at (3,1.5) {$\delta_a^*$};
\node at (2.35,0) {$\Bool Q^*$};
\end{scope}

\end{tikzpicture}
    \caption{To a labelled dot on an upward-oriented interval functor $\mcF$ associates endomorphism $\delta_a$ of $\Bool Q$. These endomorphisms, over all $a\in \Sigma$, encode the oriented labelled graph of the automaton $(Q)$. To a dot on a downward-oriented interval functor $\mcF$  associates the dual operator $\delta_a^{\ast}$ on $\Bool Q^{\ast}$.}
    \label{figure-3}
\end{figure} 

Functor $\mcF$ associates an endomorphism $\delta_a$ of the free $\Bool$-module $\Bool Q$ to a point defect labelled $a$ on an upward-oriented line, see Figure~\ref{figure-3}. This endomorphism is determined by its values $\delta_a(q)\subset Q$  on the basis elements $q\in Q$, via the above isomorphism $\Bool Q\cong \mathcal{P}(Q)$. To endomorphisms $\delta_a,$ $a\in \Sigma$, associate a decorated oriented graph with $Q$ as the set of vertices and an oriented edge labelled $a$ out of $q$ and into $q'$ if and only if $q'\in \delta_a(q)$, over all $a,q$. 

   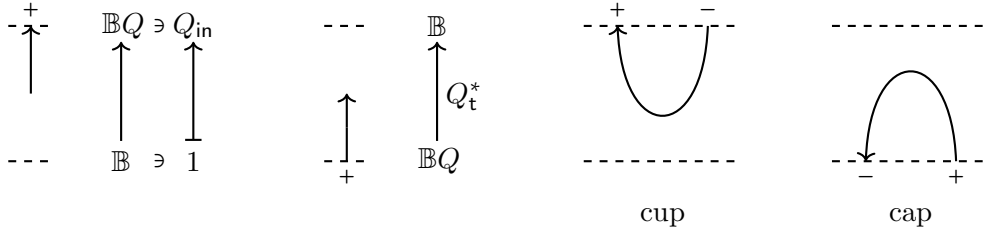
\begin{figure}
    \centering
\begin{tikzpicture}[scale=0.6]

    \begin{scope}[shift={(13,0)}]
    \draw[thick,dashed] (-0.25,3) -- (3.25,3);
    \draw[thick,dashed] (-0.25,0) -- (3.25,0);
    
    \node at (0.5,3.35) {$+$};
    \node at (2.5,3.35) {$-$};
    \draw[thick,<-] (0.5,3) .. controls (0.6,0.35) and (2.4,0.35) .. (2.5,3);
    \node at (1.5,-1.25) {cup};
    \end{scope}  

    \begin{scope}[shift={(18.5,0)}]
    \draw[thick,dashed] (-0.25,3) -- (3.25,3);
    \draw[thick,dashed] (-0.25,0) -- (3.25,0);
    
    \node at (0.5,-0.35) {$-$};
    \node at (2.5,-0.35) {$+$};
    
    \draw[thick,<-] (0.5,0) .. controls (0.6,2.65) and (2.4,2.65) .. (2.5,0);
    \node at (1.5,-1.25) {cap};
    \end{scope}

\begin{scope}[shift={(0,0)}]
\draw[thick,dashed] (0,3) -- (1,3);
\draw[thick,dashed] (0,0) -- (1,0);
\node at (0.5,3.35) {$+$};
\draw[thick,->] (0.5,1.5) -- (0.5,3);
\end{scope}

\begin{scope}[shift={(0.8,0)}]
\node at (1.75,3) {$\Bool Q$};
\draw[thick,->] (1.7,0.45) -- (1.7,2.65);
\node at (1.7,0) {$\Bool$};

\node at (3.30,3) {$Q_{\init}$};
\node at (2.55,3) {$\ni$};
\draw[thick,|->] (3.30,0.45) -- (3.30,2.65);
\node at (2.55,0) {$\ni$};
\node at (3.30,0) {$1$};
\end{scope}

\begin{scope}[shift={(7,0)}]
\draw[thick,dashed] (0,3) -- (1,3);
\draw[thick,dashed] (0,0) -- (1,0);
\node at (0.5,-0.35) {$+$};
\draw[thick] (0.5,0) -- (0.5,0.75);
\draw[thick,->] (0.5,0.75) -- (0.5,1.5);
\end{scope}

\begin{scope}[shift={(7.8,0)}]
\node at (1.7,3) {$\Bool$};
\draw[thick,->] (1.7,0.45) -- (1.7,2.65);
\node at (1.75,0) {$\Bool Q$};
\node at (2.30,1.45) {$Q_{\t}^{\ast}$};
\end{scope}

\end{tikzpicture}
    \caption{Left: maps assigned to the half-intervals with a $+$ boundary points. Right: ``cup'' and ``cap'' cobordisms.
    }
    \label{figure-2}
\end{figure} 

To the negative point $-$ functor $\mcF$ associates a free semimodule which can be canonically identified with the dual of $\Bool Q$: 
\[\mcF(-)\cong (\Bool Q)^{\ast}= \Bool Q^{\ast},
\]
where $Q^{\ast}$ is just a copy of $Q$ with elements denoted $q^{\ast}$, $q\in Q$. 
The identification is such that to ``cup'' and ``cap'' cobordisms shown in Figure~\ref{figure-2} on the right functor $\mcF$ associates the standard duality morphisms: \emph{coevaluation} morphism to the cup and \emph{evaluation} morphism to the cap 

\begin{equation}
\label{align_1}
\begin{aligned}[c]
    &\mcF(+-) \cong \Bool Q\otimes \Bool Q^{\ast}, \\
    &\mathsf{coev} : \ \Bool \lra \Bool Q \otimes \Bool Q^{\ast}, \\ 
    &\mathsf{coev}(1) = \sum_{q\in Q}q\otimes q^{\ast}, \\
\end{aligned}
\hspace{2cm}
\begin{aligned}[c]
    &\mcF(-+) = \Bool Q^{\ast}\otimes \Bool Q, \\
    &\mathsf{ev} : \ \Bool Q^{\ast} \otimes \Bool Q\lra \Bool, \\  
    &\mathsf{ev}(q^{\ast}\otimes q') = \delta_{q,q'}. \\ 
\end{aligned}
\end{equation}

Consider two half-intervals, shown in Figure~\ref{figure-2} on the left. To the half-interval oriented ``in'' at its inner endpoint functor $\mcF$ associates a semimodule 
homomorphism $\Bool\lra \Bool Q$, determined by the image of $1$, which is a subset $Q_{\init}\subset Q$, the set of initial states of the automaton associated to $\mcF$. To the ``out'' oriented half-interval functor $\mcF$ associates a linear map $\Bool Q\lra \Bool$. Denote by $Q_{\t}$ the set of states $q\in Q$ taken to $1$ by this map. This is the subset of \emph{terminal} or \emph{accepting} states of the automaton. 

At this point the construction is complete. The nondeterministic finite state automaton associated to $\mcF$ and denoted $(Q)$ has $Q$ as the set of states, where $\mcF(+)\cong \Bool Q$, with the transition function $\delta: \Sigma\times Q\lra \mathcal{P}(Q)$ determined by applying $\mcF$ near defect points of cobordisms, and sets of initial states $Q_{\init}$ and accepting states $Q_{\t}$ encoded by applying $\mcF$ to half-interval cobordisms.

This results in a bijection between isomorphism classes of Boolean TQFTs as in \eqref{eq_TQFT_B} and isomorphism classes of nondeterministic finite state automata with alphabet $\Sigma$. 

Regular language $L_{(Q)}$ associated with an automaton $(Q)$ describes evaluation of \emph{floating intervals}, which are one-manifolds homeomorphic to an interval with both endpoints inner. Figure~\ref{figure-1} contains one floating interval, decorated by word $ab$. 
Additionally $(Q)$ determines a regular circular language $L_{\circ,(Q)}$ which consists of circular words $\omega$ for which there exists an oriented loop in $(Q)$, see~\cite{GIKKL23,IK_TQFTjourney23}. Equivalently, a word $\omega$ on a circle evaluates to the trace of the operator on $\Bool Q$ that it induces.  

\begin{remark}
One possible generalization of the above correspondence is that to \emph{quasi-automata}, as defined in~\cite{GIKKL23}, by considering symmetric monoidal functors similar to those in \eqref{eq_TQFT_B} but with the target the entire category of $\Bool$-modules:  
\begin{equation}\label{eq_TQFT_Ball}
\mcF \ : \ \Cob_{\Sigma}\lra \Bool\dmod.
\end{equation}
Then $\mcF(+)$ is a projective finite $\Bool$-module which is not necessarily free. Any such module is isomorphic to the module $\mathcal{U}(Y)$ of open sets of a finite topological space $Y$, with the sum given by the union of open sets and $0$ given by the empty set. An element $a\in \Sigma$ must act on  $\mathcal{U}(Y)\cong \mcF(+)$ taking open sets to open sets preserving the union operation and the empty set. The set of initial states is replaced by an element $Q_{\init}\in \mcF(+)$ and the set of terminal (accepting) states is replaced by a $\Bool$-module map $Q_{\t}:\mcF(+)\lra \Bool$. 

Isomorphism classes of such \emph{quasi-automata} are in a bijection with isomorphism classes of symmetric monoidal functors \eqref{eq_TQFT_Ball}. 
\end{remark} 


\subsection{Boolean topological theories in one dimension}
\label{subsec_treview}

An earlier approach in~\cite{IK-top-automata} considers Boolean-valued lax TQFTs which may fail the tensor product axiom. One starts with a Boolean-valued evaluation of closed cobodisms in $\Cob_{\Sigma}$, that is, floating intervals and circles decorated by words in $\Sigma^{\ast}$. 
This requires a pair of Boolean functions $\alpha=(\alphai,\alpha_{\circ})$:
\begin{equation}\label{eq_pair} 
    \alphai: \Sigma^{\ast}\lra \Bool, \hspace{1cm}
    \alpha_{\circ}: \Sigma^{\circ} \lra \Bool, 
\end{equation}
where $\Sigma^{\ast}$ is the set of finite words in the alphabet $\Sigma$. 
Function $\alphai$ on words $\omega\in \Sigma^{\ast}$ determines how to evaluate floating intervals decorated by $\omega$. Function $\alpha_{\circ}$ is rotation-invariant, $\alphac(\omega_1\omega_2)=\alphac(\omega_2\omega_1)$, where $\Sigma^{\circ}$ above is the set of \emph{circular} words, that is, the quotient of $\Sigma^{\ast}$ by the rotation equivalence relation.  

Given $\alpha$ as above, pick a sign sequence $\varepsilon$ and consider the free $\Bool$-module $\Fr(\varepsilon)$ with the basis the set of cobordisms from $\emptyset$ to $\varepsilon$ in $\Cob_{\Sigma}$ without floating components. This module comes with a $\Bool$-bilinear form 
\[
(\:\:, \:\: )_{\varepsilon} \ : \ \Fr(\varepsilon)\otimes \Fr(\varepsilon) \lra \Bool, 
\hspace{1cm}
(x,y)_{\varepsilon} := \alpha(\overline{x}y). 
\]
Here $x,y$ are cobordisms from the empty 0-manifold $\emptyset_0$ to $\varepsilon$. Reflect $x$ and reverse its orientation to view it as a cobordism from $\varepsilon$ to $\emptyset_0$. The composition $\overline{x}y$ is an endomorphism of the empty $0$-manifold (a floating cobordism). It is a union of floating intervals and circles and can be evaluated via $\alpha$, giving the bilinear form $(\:\:,\:\:)_{\varepsilon}$. 

Define the state space 
\[A(\varepsilon)=A_{\alpha}(\varepsilon):=\Fr(\varepsilon)/\mathsf{ker}((\:\:,\:\:)_{\varepsilon})
\]
as the quotient of the free module $\Fr(\varepsilon)$ by the kernel of the form. Since we are working over a semiring, the quotient must be understood in the set-theoretic sense. Two finite $\Bool$-linear combinations $\sum_i x_i$, $\sum_{i'}x_{i'}$ are equal in $A(\varepsilon)$ if for any cobordism $y$ from $\emptyset_0$ to $\varepsilon$ we have 
\[
\sum_i \alpha(\overline{y}x_i) = \sum_{i'}\alpha(\overline{y}x_{i'}).
\]
The assignment $\varepsilon \longmapsto A(\varepsilon)$ extends to a functor 
\[
A=A_{\alpha} \ : \ \Cob_{\Sigma} \lra \Bool\dmod
\]
that to a sign sequence $\varepsilon$ assigns $A(\varepsilon)$ and to a cobordism $z$ from 
$\varepsilon$ to $\varepsilon'$ assigns the $\Bool$-module map $A(z):A(\varepsilon)\lra A(\varepsilon')$ given by composing cobordisms from $\emptyset_0$ to $\varepsilon$ with $z$. 

\begin{theorem}\cite{IK-top-automata} The following conditions are equivalent: 
\begin{enumerate}
    \item Languages $\alphai^{-1}(1), \alphac^{-1}(1)\subset \Sigma^{\ast}$ are regular, 
    \item State spaces $A(\varepsilon)$ are finite for all sequences $\varepsilon$, 
    \item State spaces $A(+),A(+-)$ are finite. 
\end{enumerate}
\end{theorem}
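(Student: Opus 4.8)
The plan is to prove the cyclic chain $(1) \Rightarrow (2) \Rightarrow (3) \Rightarrow (1)$. The implication $(2) \Rightarrow (3)$ is immediate, since $(3)$ only asks for finiteness of $A(\varepsilon)$ at the two particular sequences $\varepsilon = +$ and $\varepsilon = +-$, a special case of $(2)$. The content lies in the other two implications, and I would route both through the Myhill--Nerode theorem, using that over $\Bool$ an element of $\Fr(\varepsilon)$ is the same data as a finite subset $S$ of the basis and the pairing is a Boolean disjunction, $(S,y)_\varepsilon = \bigvee_{x\in S}(x,y)_\varepsilon$.

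For $(3)\Rightarrow(1)$ I would treat the two spaces separately. A basis cobordism of $\Fr(+)$ is a single interval from an inner endpoint to the outer point $+$, decorated by a word $u\in\Sigma^\ast$; since there is only one outer point no circle can form, and composing $\overline{x_u}$ with $x_v$ yields one floating interval, so $(x_u,x_v)_+ = \alphai(v\,\widetilde u)$, where $\widetilde u$ is $u$ read backwards. Hence if $A(+)$ is finite there are finitely many classes of singletons $x_u$, i.e.\ finitely many classes of words $u$ under the one-sided syntactic congruence of $\alphai^{-1}(1)$, so that language is regular by Myhill--Nerode. For $\alphac^{-1}(1)$ I would instead use the \emph{cup} cobordisms of $\Fr(+-)$, arcs from $+$ to $-$ decorated by a word $w$: composing two cups forms a single circle, so the pairing of two cup-singletons is $\alphac([v\,\widetilde w])$, evaluated on the circular word $[v\,\widetilde w]$. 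Finiteness of $A(+-)$ forces finitely many classes of cup-singletons, hence finitely many distinct functions $w \mapsto \bigl(v\mapsto \alphac([v\,\widetilde w])\bigr)$; by rotation-invariance of $\alphac$ these are exactly the syntactic classes of the rotation-closed language $\alphac^{-1}(1)\subset\Sigma^\ast$, which is therefore regular.

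For $(1)\Rightarrow(2)$ I would show directly that finite syntactic data forces every $A(\varepsilon)$ to be finite. A basis cobordism of $\Fr(\varepsilon)$ is a pairing of the outer points of $\varepsilon$ into oriented arcs together with intervals from the remaining points to inner endpoints, each arc and interval carrying a decorating word. Composing $\overline{x}$ with a second such cobordism $y$ produces a disjoint union of floating intervals and circles, and I would use multiplicativity of the evaluation over connected components, so that $(x,y)_\varepsilon$ is the Boolean product of $\alphai$-values on the interval components and $\alphac$-values on the circle components. Each component word is a concatenation of decorating words from $x$ and $y$, so the value depends on the words of $x$ only through their classes in the finite syntactic monoids of $\alphai^{-1}(1)$ and $\alphac^{-1}(1)$, together with the finitely many matching patterns on the points of $\varepsilon$. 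This bounds the number $N$ of classes of basis cobordisms; and since the pairing is a disjunction over a subset, the class of a finite subset in $A(\varepsilon)$ depends only on which of these $N$ basis-classes it meets, giving $|A(\varepsilon)|\le 2^{N}$ and hence $(2)$.

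The main obstacle I anticipate is the bookkeeping in $(1)\Rightarrow(2)$: one must verify that when the arcs and intervals of $x$ and $y$ are glued, the partition of the result into interval-versus-circle components and the decorating words on them are governed only by the finite congruence classes of the pieces --- in particular that the syntactic congruences, being monoid congruences, survive concatenation and, in the circular case, the rotation identification. Pinning down the orientation and word-reversal conventions consistently across the reflection $x\mapsto\overline{x}$ is the fiddly part; once that is fixed, each implication reduces to a standard application of Myhill--Nerode.
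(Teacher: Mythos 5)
The paper does not actually prove this theorem in the text---it is quoted from \cite{IK-top-automata}---so there is no in-paper proof to compare against. Your argument is correct and is essentially the standard route taken in the cited source: identifying the pairing on singleton basis cobordisms of $\Fr(+)$ and on cups in $\Fr(+-)$ with the Nerode/syntactic congruences of $\alphai^{-1}(1)$ and of the rotation-closed language $\alphac^{-1}(1)$ gives $(3)\Rightarrow(1)$ via Myhill--Nerode, while $(1)\Rightarrow(2)$ follows from multiplicativity of $\alpha$ over connected components together with the bound $|A(\varepsilon)|\le 2^{N}$ that the disjunctive form of the Boolean pairing provides. The reversal/orientation bookkeeping you flag is real but routine and does not hide a gap.
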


If one of these equivalent conditions holds, we say that $\alpha$ is a \emph{rational} evaluation. Recall that a language $L\subset \Sigma^{\ast}$ is called \emph{regular} is it is given by a regular expression, equivalently, if it can be described by a finite state automaton. 

Assume from now on that $\alpha$ is rational. Then $A(+)$ and $A(-)$ are dual semimodules, of the same cardinality, with the duality given by the pairing that glues a pair of cobordisms from $\emptyset_0$ into $+$ and $-$, respectively, into a floating cobordism and evaluates it via $\alpha$. In particular, $|A(-)|=|A(+)|$. 

State space $A(+)$ depends only on $\alphai$. 
For some rational evaluations $\alphai$ this state space is a \emph{projective} $\Bool$-semimodule. When this happen there is a canonical circular regular language $\alpha_{\circ}\subset \Sigma^{\circ}$ with $\omega\in \alpha_{\circ}$ if and only if $\tr_{A(+)}(\omega)=1$, see~\cite[Section 4.4]{IK-top-automata}. The topological theory for such a pair $\alpha=(\alphai,\alphac)$ is a TQFT, with $A(\varepsilon)\cong A(\varepsilon_1)\otimes \cdots \otimes A(\varepsilon_n)$ for a sign sequence $\varepsilon=(\varepsilon_1,\ldots, \varepsilon_n)$. In this case there is a decomposition of the identity, where the cup cobordism from $\emptyset_0$ to $+-$, see Figure~\ref{figure-2}, is in the image of the natural homomorphism $A(+)\otimes A(-)\lra A(+-)$ and the latter map is an isomorphism of $\Bool$-modules.   

Note that for general rational pairs $\alpha$ the map $A(+)\otimes A(-)\lra A(+-)$ is not an isomorphism and the cup cobordism is not in its image. 

\vspace{0.1in} 

 Cobordisms which are morphisms in $\Cob_{\Sigma}$ carry finitely many defects, which are dots decorated by elements of $\Sigma$. In the next section we allow defects to accumulate towards inner boundary points and, in fact, require to have such an infinite countable chain of defects near each inner boundary point. In particular, any half-interval contains an inner and an outer boundary point and necessarily carries infinitely-many defects.

%
%

\section{Categories from sofic systems}
\label{section:sofic-systems}

\subsection{Summary of key notations used in this section}

The summary is provided here for convenience, with definitions and discussions in the follow-up subsections. 

\begin{itemize}
\item $\SigmaZ$ is the set of doubly-infinite sequences, $\Sigmal$ is the set left-infinite sequences, $\Sigmar$ -- the set of right-infinite sequences, $\Sigma^{\ast}$ --the set of finite sequences, $\Sigma^{\circ}$ -- the set of finite sequences up to rotation.
\item $\alphaZ:\SigmaZ\lra \Bool$ is a $\Bool$-valued evaluation of doubly-infinite words, with $X:=\alphaZ^{-1}(1)$ a closed subset of $\SigmaZ$, also denoted $L(\alphaZ)$ and called \emph{a shift}. 
\item $W\subset \Sigma^{\ast}$ the language of prohibited finite words, either initially given or determined by  $\alphaZ$,  see \eqref{eq_Wprime}. It consists of finite words that do not appear as subwords of any infinite word in $X$.
\item A shift can also be defined starting with a subset $W\subset \Sigma^{\ast}$ by $X:=W^{\perp}, X\subset \SigmaZ$. Here $W^{\perp}\subset \SigmaZ$ is the set of infinite words that do not contain any subword in $W$, 
\item 
A shift is sofic if $W$ is a regular language, where we start with $W\subset \Sigma^{\ast}$, then form $X=W^{\perp}\subset \SigmaZ$. 
\item $W^{\perp}_{\f}$ is the set of finite words that do not contain any subword in $W$, see \eqref{eq-Wperp}. 
\item Some of these notations appear in Table~\ref{tab:my_label}. 
\end{itemize}


\subsection{Infinite words and shifts}
\label{subsec_infinite}
For a finite set $\Sigma$ of letters, denote by 
\[\SigmaZ=\{ \cdots a_{i-1}a_ia_{i+1}\cdots \mid \,  a_i \in \Sigma\},
\]
the set of doubly-infinite sequences of elements of $\Sigma$. Set $\SigmaZ$ carries the direct product topology, where $\Sigma$ is given the discrete topology.  The infinite cyclic group $\Z$ acts continuously on $\SigmaZ$ by shifting  each term of a sequence $n$ steps to the left, for $n\in \Z$ (for negative $n$, this means shifting each term by $-n$ steps to the right). 
The action of $\Z$ on $\Sigma^{\Z}$ is called the \emph{shift action}, and shift by one step to the left is also known as the \emph{shift transformation}.
The orbits under this action are elements of $\SigmaZ/\Z$.

\vspace{0.07in} 

In symbolic dynamics, $\SigmaZ$ is called the \emph{full shift}. A shift $X$ is a closed $\Z$-invariant subspace of $\SigmaZ$ \cite{Kit98, lind2021introduction,Williams04, adler1998symbolic}.

We call a $\Z$-invariant subset of $\SigmaZ$ an \emph{infinite language} (a language of infinite words). An infinite language is called \emph{closed} if the corresponding subset is closed in $\SigmaZ$. Closed infinite languages are in a bijection with shift spaces $X$. 

Subsets of a set $Y$ are in a bijection with maps $Y\lra \{0,1\}$ into a 2-element set. 
To a Boolean evaluation of infinite words $\alphaZ:\SigmaZ/\Z\lra \Bool=\{0,1\}$ we associate the infinite language (a set of infinite words) 
 \begin{equation}
 L(\alphaZ)=\{ \omega \in \SigmaZ \mid \alpha(\omega)=1\}.
 \end{equation}
 Here and later we view $\alphaZ$ as both a function on $\Sigmalr/\Z$ and a $\Z$-invariant function on $\Sigmalr$. 

A finite subword of an infinite word $\omega=\cdots a_{-1}a_0a_1\cdots $ is any finite sequence $a_ia_{i+1}\cdots a_j$, $i\le j+1$, of consecutive letters in $\omega$ (case $i=j+1$ corresponds to the empty subword $\emptyset$). 

\vspace{0.07in}
  
Any shift $X$ can be described as follows. Choose a subset $W\subset \Sigma^{\ast}$ of the set of finite words. 
Consider the infinite language $W^{\perp}\subset \SigmaZ$ which consists of infinite words that do not contain any finite word from $W$ as a subword. Thus, the complement $\SigmaZ\setminus W^{\perp}$ consists of words of the form $\omega_0\omega_1\omega_2$, where $\omega_0$ and $\omega_2$ are left-infinite, respectively right-infinite words, and $\omega_1\in W$, so that 
\begin{equation}\label{eq_perp_W}
    W^{\perp} \ = \SigmaZ \setminus \Sigmal W \Sigmar ,
\end{equation}
see formula \eqref{eq_sigmas} below for these notations. 
It is convenient to also associate to $W$ the finite language 
\begin{equation}\label{eq-Wperp}
    W^{\perp}_{\f} \ := \Sigma^{\ast} \setminus \Sigma^{\ast}W\Sigma^{\ast}
\end{equation}
consisting of all finite words that do not contain a word in $W$. An infinite word is in $W^{\perp}$ if and only if any finite subword of it is in $W^{\perp}_{\f}$. Note that both $W^{\perp}$ and $W^{\perp}_{\f}$ only depend on the closure $\Sigma^{\ast}W\Sigma^{\ast}$, so that different $W$'s can give rise to the same languages $W^{\perp}$ and $W^{\perp}_{\f}$.

For any shift $X$ there exists a subset $W\subset \Sigma^{\ast}$ so that $X=W^{\perp}$. 
Languages $W,W^{\perp}$ and $W^{\perp}_{\f}$ are summarized in Table~\ref{tab:my_label}. 

\vspace{0.07in} 
 
\begin{table}[hbt!]
    \centering
    \begin{tabular}{|c|c|}
    \hline 
     Notation & Description of language \\ 
    \hline 
        $W$ &  $\text{finite subwords  that  do not  appear   in  words  in}$ $X$  \\
    \hline 
        $X=W^\perp $ & $\text{{\it infinite}  words  with no  subwords in}$  $W$ \\
    \hline 
        $W^{\perp}_{\f}$ &  $\text{{\it finite}  words  with no  subwords in}$  $W$ \\
    \hline         
    \end{tabular} 
    \caption{Languages associated with $X$.}
    \label{tab:my_label}
\end{table} 
\begin{itemize}
\item A shift is called \emph{finite} if $X=W^{\perp}$ for some finite subset $W$. 
\item A shift is called \emph{sofic} if $X=W^{\perp}$ for some regular subset $W$, where $W\subset \Sigma^{\ast}$ is viewed as a language \cite{weiss1973subshifts,Kit98,lind2021introduction}. A regular language is one accepted by a finite state automaton. 
\end{itemize} 
A finite shift is necessarily sofic. An example of a sofic shift which is not finite is given by the language $W=a(bb)^{\ast}ba$, with $\Sigma=\{a,b\}$. Shift $X$ then consists of all infinite words which do not contain any odd-length finite runs of consecutive $b$'s. 

\begin{remark}
    Many naturally occurring invertible dynamical systems on manifolds and measure spaces can be encoded or approximated by the full shift or its finite and sofic subshifts, see~\cite{adler1998symbolic,Kit98,Williams04}, \cite[Section 6.5]{lind2021introduction}, \cite[Section 5]{GH83}.

\end{remark}

Denote by \begin{equation}
   \label{eq_sigmas} \Sigmal = \{ \cdots a_{-3}a_{-2}a_{-1} |\,  a_{-i}\in \Sigma \} , 
     \ \ \ \ 
     \Sigmar = \{ a_0a_1a_2\cdots |\,  a_i \in \Sigma\} 
\end{equation}
the sets of left-infinite and right-infinite sequences of elements of $\Sigma$. Concatenation gives a natural bijection 
$\Sigmal \times \Sigmar \lra \Sigmalr$. We use $\t$ for \emph{tail} and $\h$ for \emph{head}. 

Recall that $\Sigma^{\ast}$ denotes the set of finite words in the alphabet $\Sigma$ and $\Sigma^{\circ}=\Sigma^{\ast}/\sim$ is the set of circular words (the quotient of $\Sigma^{\ast}$ by the equivalence relation $\omega_1\omega_2\sim \omega_2\omega_1$ for $\omega_1\omega_2 \in \Sigma^{\ast}$). Concatenation gives maps 
\[ \Sigmal \times \Sigma^{\ast}\lra \Sigmal, \ \ \Sigma^{\ast}\times \Sigmar \lra \Sigmar.
\] 
To summarize, sequence notations are shown in Table~\ref{tab:sofic-summary}:

\begin{table}[h!]
    \centering
    \begin{tabular}{|c|c|c|c|c|}
        \hline 
         $\SigmaZ$ & $\Sigmal$ & $\Sigmar$ & $\Sigmaa$ &  $\Sigmac$ \\
         \hline 
         infinite words & left-infinite words & right-infinite words & finite words & circular words \\
         \hline 
         $\cdots a_{-1}a_0 a_1 \cdots$ & $\cdots a_{-2}a_{-1}a_0 $ & $a_1a_2a_3\cdots $ & $a_1\cdots a_n$ & $\omega_1\omega_2\sim \omega_2\omega_1$ \\
         \hline 
    \end{tabular}
    \caption{
    Types of infinite and finite words, with $\omega_i\in \Sigma^{\ast}$. }
    \label{tab:sofic-summary}
\end{table}
Sets $\Sigmaa,\Sigmac$ are countably infinite, the other three sets are uncountable.

For $a\in \Sigma$, denote by $a^{\t}$ or $a^{-\infty}\in \Sigmal$ the left-infinite word $\cdots aaa$, by $a^{\h}$ or  $a^{+\infty}\in \Sigmar$ the right-infinite word $aaa\cdots $, and by $a^{\Z}$ the infinite word $\cdots aaa \cdots \in \Sigma^{\Z}$.

\vspace{0.1in} 

In the description above, one starts with a finite or regular language $W\subset \Sigma^{\ast}$ and assigns to it a finite or sofic shift $X$ and the associated infinite evaluation function $\alphaZ:\SigmaZ/Z\lra \Bool$ with $\alphaZ^{-1}(1)=X$. We also denote $X$ by $L=L(\alphaZ)$.

\vspace{0.05in} 

Going in the opposite direction, 
to a \emph{closed} evaluation $\alphaZ:\Sigmalr/\Z\lra \Bool$ associate the set of finite words \begin{equation}\label{eq_Wprime}
W:=W(\alphaZ)\subset \Sigma^{\ast}
\end{equation}
with $\omega\in W(\alphaZ)$ if and only if $\omega$ is not a subword of any infinite word $\omega\in L(\alphaZ)$. In other words, $\omega\in W$ if and only if for any words $\omega_-\in\Sigmal,\omega_+\in \Sigmar$ the equation $\alphaZ(\omega_-\omega\omega_+)=0$ holds. 
Thus, language $W=W(\alphaZ)$ consists of words that are avoided by infinite words in $L(\alphaZ)$.
Language $W$ is prefix- and suffix-closed, in the sense that $\Sigma^{\ast}W\Sigma^{\ast}=W$. Then $L=W^{\perp}$ can be reconstructed from $W$. Indeed, the condition that $\alphaZ$ is closed says that $\alphaZ^{-1}(0)$ is open in $\SigmaZ$. Then $\alphaZ^{-1}(0)$ is the union of base open sets, which correspond to finite words, exactly those avoided by infinite words in $L=\alphaZ^{-1}(1)$.


\subsection{Cobordisms with accumulating defects}
\label{subsec_accumulating}

We next introduce the category $\Cob^{\infty}_{\Sigma}$ of cobordisms with $\Sigma$-defects where defects accumulate towards each floating (inner) endpoint of a  cobordism. 
   
Category $\Cob^{\infty}_{\Sigma}$ is defined similarly to $\Cob_{\Sigma}$, see Section~\ref{subsec_qreview}. Its objects are oriented 0-manifolds, parameterized by finite sequences of signs $\varepsilon=(\varepsilon_1,\ldots, \varepsilon_n)$. A morphism from $\varepsilon$ to $\varepsilon'$ is an oriented 1-manifold $U$ with \emph{outer boundary} $\partial_o U = \varepsilon'\cup -\varepsilon$. Manifold $U$ is allowed to have ``inner'' boundary points, so its boundary decomposes $\partial U = \partial_o U\sqcup \partial_i U$ into the disjoint union of ``outer'' boundary points (elements of $\partial_o U$) and ``inner'' boundary points (elements of $\partial_i U$). Same boundary decomposition holds for morphisms in $\Cob^{\infty}_{\Sigma}$. The difference is that at each floating endpoint of a morphism $U$ in $\Cob_{\Sigma}^{\infty}$ we require an accumulation of defects, a countably infinite sequence of defects approaching the floating endpoint.   
  
Depending on the orientation near an inner (floating) endpoint, we parametrize possible sequences of defects by elements of $\Sigmal$, respectively $\Sigmar$, see Figure~\ref{sofic-0001}. Such parametrization requires a \emph{cut-off} position between two consecutive dots, where the semi-infinite sequence of dots terminates. If an interval $I$ with a floating endpoint has the other endpoint on the outer boundary, the cut-off point can be chosen to be the outer boundary point. Such semi-floating intervals necessarily carry accumulation of defects towards their floating endpoints and, depending on orientation, can be parametrized by elements of $\Sigmal$ or $\Sigmar$, respectively, see Figure~\ref{sofic-0001}.  

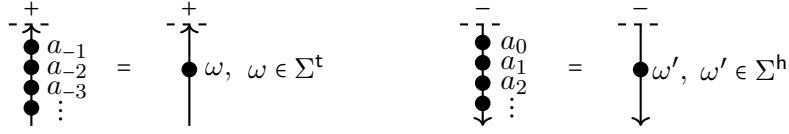
\begin{figure}
    \centering
\begin{tikzpicture}[scale=0.6]

\begin{scope}[shift={(-0.5,0)}]
\node at (0.5,2.35) {$+$};
\draw[thick,dashed] (0,2) -- (1,2);
\draw[thick,<-] (0.5,2) -- (0.5,-0.25);

\draw[thick,fill] (0.65,1.5) arc (0:360:1.5mm);
\node at (1.3,1.45) {$a_{-1}$};
\draw[thick,fill] (0.65,1.05) arc (0:360:1.5mm);
\node at (1.3,1.0) {$a_{-2}$};
\draw[thick,fill] (0.65,0.6) arc (0:360:1.5mm);
\node at (1.3,0.55) {$a_{-3}$};
\draw[thick,fill] (0.65,0.15) arc (0:360:1.5mm);
\node at (1.15,0.10) {$\vdots$};

\node at  (2.55,1) {$=$};
\end{scope}

\begin{scope}[shift={(3,0)}]
\node at (0.5,2.35) {$+$};
\draw[thick,dashed] (0,2) -- (1,2);
\draw[thick,<-] (0.5,2) -- (0.5,-0.25);

\draw[thick,fill] (0.65,1.0) arc (0:360:1.5mm);
\node at (1.15,0.95) {$\omega$,};
\node at (2.65,1.07) {$\omega \in \Sigmal $}; 
\end{scope}

\begin{scope}[shift={(9.5,0)}]
\node at (0.5,2.35) {$-$};
\draw[thick,dashed] (0,2) -- (1,2);
\draw[thick,->] (0.5,2) -- (0.5,-0.25);

\draw[thick,fill] (0.65,1.6) arc (0:360:1.5mm);
\node at (1.2,1.55) {$a_{0}$};
\draw[thick,fill] (0.65,1.15) arc (0:360:1.5mm);
\node at (1.2,1.1) {$a_{1}$};
\draw[thick,fill] (0.65,0.7) arc (0:360:1.5mm);
\node at (1.2,0.65) {$a_{2}$};
\draw[thick,fill] (0.65,0.25) arc (0:360:1.5mm);
\node at (1.15,0.15) {$\vdots$};

\node at  (2.55,1) {$=$};
\end{scope}

\begin{scope}[shift={(13,0)}]
\node at (0.5,2.35) {$-$};
\draw[thick,dashed] (0,2) -- (1,2);
\draw[thick,->] (0.5,2) -- (0.5,-0.25);

\draw[thick,fill] (0.65,1.0) arc (0:360:1.5mm);
\node at (1.15,0.97) {$\omega'$,};
\node at (2.8,1.05) {$\omega' \in \Sigmar $}; 
\end{scope}

\end{tikzpicture}
    \caption{Homeomorphism (or diffeomorphism) classes of semifloating intervals are in a bijection with semiinfinite words $\omega=\cdots a_{-2}a_{-1}\in \Sigmal$ (left-infinite) and $\omega'=a_0a_1\cdots\in \Sigmar$ (right-infinite).  }
    \label{sofic-0001}
\end{figure}

An arc in $U$ without floating endpoints (so both of its endpoints are at the outer boundary) carries a finite set of defects, possibly the empty set, see the top row in Figure~\ref{sofic-0002}.
Floating components in $U$ are of two types, see Figure~\ref{sofic-0002}, bottom row:  
\begin{itemize}
    \item A circle with finitely many defects, decorated by an element of $\Sigma^{\circ}$.
    \item An interval with defects accumulating towards both endpoints. Such an interval is described by a sequence in $\Sigmalr$ up to an overall shift, thus an element of $\Sigmalr/\Z$. 
\end{itemize}

\begin{figure}
    \centering
\begin{tikzpicture}[scale=0.6]

\begin{scope}[shift={(5,0)}]

\node at (0.5,3.35) {$-$};
\node at (2.5,3.35) {$+$};
\draw[thick,dashed] (0,3) -- (3,3);
\draw[thick,->] (0.5,3) .. controls (0.6,0.5) and (2.4,0.5) .. (2.5,3);
\draw[thick,fill] (0.7,2.30) arc (0:360:1.5mm);
\node at (0,2.1) {$a_1$};
\draw[thick,fill] (1.00,1.50) arc (0:360:1.5mm);
\node at (0.4,1.15) {$a_2$};
\draw[thick,fill] (1.65,1.1) arc (0:360:1.5mm);
\node at (1.5,0.6) {$a_3$};
\draw[thick,fill] (2.4,1.8) arc (0:360:1.5mm);
\node at (2.6,1.4) {$\rotatebox[origin=c]{53}{$\cdots$}$};
\draw[thick,fill] (2.65,2.5) arc (0:360:1.5mm);
\node at (3.10,2.5) {$a_n$};

\node at (4.2,1.75) {$=$};
\end{scope}

\begin{scope}[shift={(10,0)}]

\node at (0.5,3.35) {$-$};
\node at (2.5,3.35) {$+$};
\draw[thick,dashed] (0,3) -- (3,3);
\draw[thick,->] (0.5,3) .. controls (0.6,0.5) and (2.4,0.5) .. (2.5,3);

\draw[thick,fill] (1.65,1.1) arc (0:360:1.5mm);
\node at (1.7,0.5) {$\omega'$};
\node at (5.00,1.75) {$\omega'\in \Sigma^*$};
\end{scope}

\begin{scope}[shift={(-0.25,-2.25)}]
\draw[thick,<-] (0,1) arc (135:-225:1cm);

\draw[thick,fill] (-0.13,0.3) arc (0:360:1.5mm);
\node at (-0.9,0.25) {$a_1$};
\draw[thick,fill] (0.24,-0.5) arc (0:360:1.5mm);
\node at (-0.4,-0.9) {$a_2$};
\draw[thick,fill] (1.24,-0.6) arc (0:360:1.5mm);
\node at (1.3,-1.12) {$a_3$};
\draw[thick,fill] (1.86,0.3) arc (0:360:1.5mm);
\node at (2.05, 0.53) {$\rotatebox[origin=c]{20}{\vdots}$};
\node at (2.05,-0.13) {$\rotatebox[origin=c]{-20}{\vdots}$};

\draw[thick,fill] (1.03,1.25) arc (0:360:1.5mm);
\node at (1.1,1.7) {$a_n$};

\node at (3,0) {$=$};
\end{scope}

\begin{scope}[shift={(4.25,-2.25)}]
\draw[thick,<-] (0,1) arc (135:-225:1cm);

\draw[thick,fill] (0.24,-0.5) arc (0:360:1.5mm);
\node at (-0.2,-1.0) {$\omega$};

\node at (3.75,0.4) {$\omega\in \Sigma^{\circ}$};
\end{scope}

\begin{scope}[shift={(11.5,-2)}]
\draw[thick,->] (0,0) -- (4.5,0);

\draw[thick,fill] (0.65,0) arc (0:360:1.5mm);
\node at (0.5,-0.5) {$\cdots$};
\draw[thick,fill] (1.25,0) arc (0:360:1.5mm);
\node at (1.25,-0.5) {$a_{-1}$};
\draw[thick,fill] (2.0,0) arc (0:360:1.5mm);
\node at (2.0,-0.5) {$a_0$};
\draw[thick,fill] (2.7,0) arc (0:360:1.5mm);
\node at (2.7,-.5) {$a_1$};
\draw[thick,fill] (3.3,0) arc (0:360:1.5mm);
\node at (3.3,-.5) {$a_2$};
\draw[thick,fill] (3.85,0) arc (0:360:1.5mm);
\node at (3.85,-.5) {$\cdots$};

\node at (5,0) {$=$};

\end{scope}

\begin{scope}[shift={(17.0,-2)}]
\draw[thick,->] (0,0) -- (2,0);

\draw[thick,fill] (1.15,0) arc (0:360:1.5mm);
\node at (1.15,-0.5) {$\omega$};
 
\node at (4,0) {$\omega\in \Sigma^{\Z}$}; 
\end{scope}

\end{tikzpicture}
    \caption{Top: finite word $\omega'=a_1\dots a_n$ along an arc with two outer endpoints. Bottom left: floating circle component with a finite circular word. Bottom right: floating interval component with an infinite word $\omega$.  }
    \label{sofic-0002}
\end{figure}
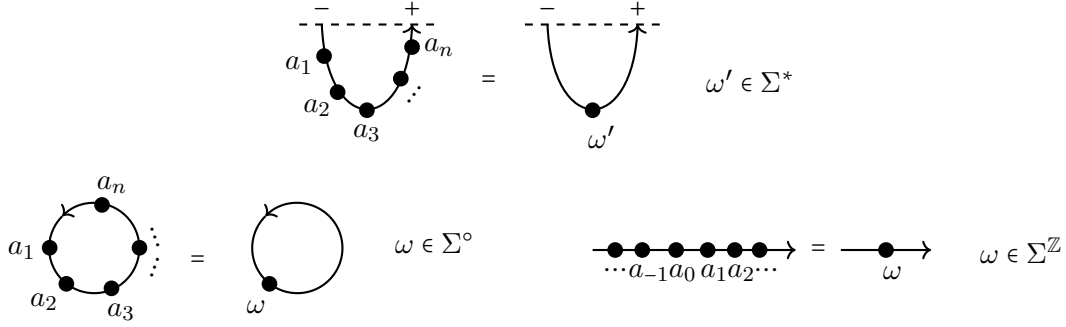


\subsection{Evaluations of infinite and finite circular words}

By an \emph{inficircular evaluation} or \emph{$\Z_{\circ}$-evaluation function} $\alpha=(\alphaZ,\alphac)$ we mean a pair of maps 
\begin{equation}\label{eq_alpha_Z}
    \alphaZ : \Sigmalr \lra \Bool, \hspace{1cm}
    \alphac : \Sigmac \lra \Bool,
\end{equation}
with a $\Z$-invariant $\alphaZ$, 
evaluating $\Z$-orbits on doubly-infinite words, respectively $\alphac$ evaluating circular words, to elements of $\Bool$.  

Choose an evaluation function $\alpha$ as in \eqref{eq_alpha_Z}. 
Equivalently, $\alpha$ can be described by a pair of languages 
\begin{equation}
    L=(L_{\Z},L_{\circ}), \hspace{1cm}
    L_{\Z} = \alphaZ^{-1}(1),  \hspace{1cm}
    L_{\circ} = \alphac^{-1}(1). 
\end{equation}
Here $L_{\Z}\subset \Sigmalr$ is a $\Z$-invariant subset that consists of all doubly-infinite words which evaluate to $1$ under $\alphaZ$, and $L_{\circ}$ consists of circular words evaluating to $1$ under $\alphac$.  

Using inficircular evaluation $\alpha$, we pass to the intermediate category ${\omcC}_{\alpha}^{\infty}$. It has the same objects as $\Cob^{\infty}_{\Sigma}$. Any floating component $C$ of a cobordism $U$ evaluates to $\alpha(C)\in \Bool$, and finite $\Bool$-linear combinations of cobordisms are allowed as morphisms. Hom spaces $\Hom(\varepsilon, 
\varepsilon')$ in $\Cob^{\infty}_{\alpha}$ are free $\Bool$-modules with a basis of all (diffeomorphism classes of) cobordisms with defects from $\varepsilon$ to  $\varepsilon'$ and without floating components. For instance, the basis of $\Hom(+,+)$ is parametrized by pairs of infinite words $(\omega_1,\omega_2),$ $\omega_1\in \Sigmal,\omega_2\in \Sigmar$ and finite words $\omega\in \Sigma^{\ast}$, see Figure~\ref{sofic-0003}.  

\vspace{0.07in} 

\begin{figure}
    \centering
\begin{tikzpicture}[scale=0.6]

\begin{scope}[shift={(0,0)}]
\draw[thick,<-] (0,1) arc (135:-225:1cm);
\draw[thick,fill] (0.24,-0.5) arc (0:360:1.5mm);
\node at (-0.2,-1.0) {$\omega$};

\draw[thick,->] (2.25,0.25) -- (3.5,0.25);
\node at (2.75,0.75) {$\alphac$};
\node at (4,0.25) {$\Bool$};
\node at (6.5,0.25) {$\omega\in \Sigma^{\circ}$};
\end{scope}

\begin{scope}[shift={(10,0.25)}]
\draw[thick,->] (0,0) -- (2,0);

\draw[thick,fill] (1.15,0) arc (0:360:1.5mm);
\node at (1.00,-0.5) {$\omega$};
 
\draw[thick,->] (2.5,0) -- (3.75,0);
\node at (3,0.5) {$\alphaZ$};
\node at (4.25,0) {$\Bool$};
\node at (6.75,0) {$\omega\in \Sigma^{\mathbb{Z}}$};
\end{scope}

\begin{scope}[shift={(0.5,-4.5)}]
\node at (0.5,2.35) {$+$};
\draw[thick,dashed] (0,2) -- (1,2);
\draw[thick,<-] (0.5,2) .. controls (0.5,1.4) and (1.25,0.5) ..  (1,0);

\draw[thick,fill] (0.95,1.0) arc (0:360:1.5mm);
\node at (1.45,0.97) {$\omega_2$};
\node at (4.95,1.05) {$\omega_2 =\cdots a_{-3}a_{-2}a_{-1} $}; 
\end{scope}

\begin{scope}[shift={(0.0,-6.5)}]
\node at (0.5,-0.35) {$+$};
\draw[thick,dashed] (0,0) -- (1,0);
\draw[thick,<-] (0.5,2) .. controls (1.5,1.5) and (0.5,0.5) .. (0.5,0);

\draw[thick,fill] (1.05,1.0) arc (0:360:1.5mm);
\node at (0.27,0.97) {$\omega_1$};
\node at (4.1,0.75) {$\omega_1 =a_0a_1a_2\cdots$}; 
\end{scope}

\begin{scope}[shift={(10.5,-5.75)}]
\node at (0.5,-0.35) {$+$};
\draw[thick,dashed] (0,0) -- (1,0);
\node at (0.5,3.35) {$+$};
\draw[thick,dashed] (0,3) -- (1,3);

\draw[thick,->] (0.5,0) -- (0.5,3);
\draw[thick,fill] (0.65,1.5) arc (0:360:1.5mm);
\node at (1.05,1.5) {$\omega$};
\node at (4.75,1.5) {$\omega = a_1 \cdots a_n\in \Sigma^*$};

\end{scope}

\end{tikzpicture}
    \caption{Top row: two types of connected floating components: circles and intervals, and their evaluations. Bottom right: two types of diagrams in $\Hom(+,+)$ without floating components, including a pair of half-intervals (left) and an arc (right).
    }
    \label{sofic-0003}
\end{figure}
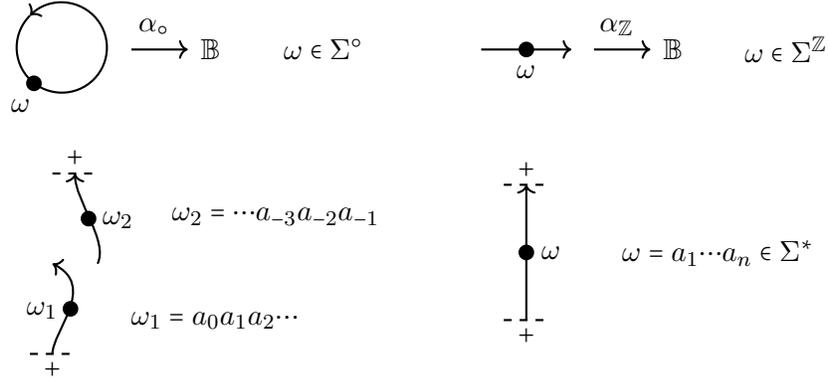

From the intermediate category $\omcC^{\infty }_{\alpha}$ we pass to the category $\Cob^{\infty}_{\alpha}$ via the universal construction for Boolean evaluations~\cite{IK-top-automata}. The new category has the same objects as the earlier two categories -- sign sequences. Hom spaces in  $\Cob^{\infty}_{\alpha}$ are quotient semimodules of the corresponding homs in $\omcC^{\infty}_{\alpha}$. Two elements $f_1,f_2$ of the hom space   $\Hom(\varepsilon,\varepsilon')$ in the category  $\omcC^{\infty}_{\alpha}$ are equal in $\Cob^{\infty}_{\alpha}$ if for any morphism $g\in \Hom(\varepsilon'^{\ast},\varepsilon^{\ast})$ the evaluations $\alpha(\mathrm{cl}(f_1,g))=\alpha(\mathrm{cl}(f_2,g))$, see Figure~\ref{closed-evaluation}. 

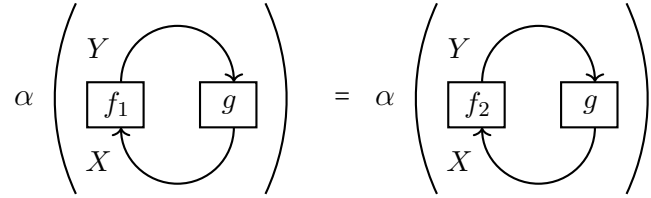
\begin{figure}
    \centering
\begin{tikzpicture}[scale=0.6]

\draw[thick] (-3.75,-3.75) rectangle (-2.5,-2.75);
\node at (-3.1,-3.25) {$f_1$};
\draw[thick] (-1.25,-2.75) rectangle (0,-3.75);
\node at (-0.6,-3.25) {$g$};

\draw[thick,<-] (-0.5,-2.75) arc (0:180:1.25);
\node at (-3.5,-2) {$Y$};

\draw[thick,<-] (-3,-3.75) arc (180:360:1.25);
\node at (-3.5,-4.5) {$X$};

\node at (-5.15,-3.1) {\large $\alpha$};
\draw[thick] (-4,-1) arc (155:205:5);

\draw[thick] (0.2,-1) arc (25:-25:5);
\node at (1.9,-3.1) {\large $=$};

\draw[thick] (4.25,-3.75) rectangle (5.5,-2.75);
\node at (4.9,-3.25) {$f_2$};
\draw[thick] (6.75,-2.75) rectangle (8,-3.75);
\node at (7.4,-3.25) {$g$};

\draw[thick,<-] (7.5,-2.75) arc (0:180:1.25);
\node at (4.5,-2) {$Y$};

\draw[thick,<-] (5,-3.75) arc (180:360:1.25);
\node at (4.5,-4.5) {$X$};

\node at (2.85,-3.1) {\large $\alpha$};
\draw[thick] (4,-1) arc (155:205:5);

\draw[thick] (8.2,-1) arc (25:-25:5);
\end{tikzpicture}
    \caption{Evaluation for the equivalence relation: $f_1\sim_{\alpha} f_2$ if 
$\alpha(\cl(f_1,g))=\alpha(\cl(f_2,g))$ for all $g \in \Hom(X^{\ast}, Y^{\ast})$. Here, $\cl$ is the closure of a pair of morphisms, see~\cite{IK-top-automata} for instance.}
    \label{closed-evaluation}
\end{figure}

\vspace{0.07in}

We thus have a chain of categories and functors 
\begin{equation} \Cob^{\infty}_{\Sigma} \lra \omcC^{\infty}_{\alpha}  \lra \Cob^{\infty}_{\alpha}.
\end{equation}  
Hom spaces in the second and third categories are $\Bool$-semilinear, and the last category is obtained via the universal construction for the evaluation $\alpha$. 

Define the state space of a sequence $\varepsilon$: 
\[ A(\varepsilon) =A_{\alpha}(\varepsilon) \ := \ \Hom_{\mcCinfalpha}(\emptyset, \varepsilon).
\] 
It consists of finite $\Bool$-linear sums of decorated 1-manifolds, as above, with boundary $\varepsilon$, modulo the universal construction equivalence relation for the evaluation $\alpha$. 
A morphism $\phi$ in $\Cob^{\infty}_{\Sigma}$ from $\varepsilon$ to $\varepsilon'$ induces a $\Bool$-linear map of state spaces  $\phi_{\ast}:A(\varepsilon)\lra A(\varepsilon')$. 
Varying over all morphisms in $\mcCinfS$ results in a lax monoidal functor 
\[
\mcF_{\alpha} \ : \ \mcCinfS \lra \Bool\dmod 
\]
to the category of $\Bool$-semimodules.


\subsection{Regular and sofic evaluations and their state spaces}

 Here is the key terminology for this section, see also Section~\ref{subsec_infinite}:  
\begin{itemize}
\item 
Evaluation $\alphaZ:\SigmaZ\lra \Bool$ on infinite words (or the infinite language $L(\alphaZ)$) is called \emph{sofic} if $W^{\perp}_{\f}$ is a regular language. Equivalently, $W=W(\alphaZ)$ is regular. (See Table~\ref{tab:my_label} and discussion there for terminology.) 
\item 
Evaluation $\alphaZ$ on infinite words is called \emph{regular} 
if the state space $A(+)$ is finite. (Note that $\alphaZ^{-1}(1)$ may not be closed in $\SigmaZ$, see Examples~\ref{ex_lang1},~\ref{example_two}.) 
\item 
\emph  {Inficircular} evaluation $\alpha=(\alphaZ,\alphac)$ for infinite words and for finite circular words, as in (\ref{eq_alpha_Z}),  is called \emph{regular} if state spaces $A(\varepsilon)$ are finite $\Bool$-modules for all sign sequences $\varepsilon$. This is equivalent to $A(+)$ and $A(+-)$  being finite. 
\end{itemize}

The state spaces $A(+),A(-)$, as defined above, depend only on the infinite evaluation $\alphaZ$ and not on the circular finite evaluation $\alphac$.

\begin{example}\label{ex_lang1} 
 Consider a non-closed infinite language $L$ on the alphabet $\{a,b\}$ which consists of all words $\omega=\cdots a_i \cdots $ which have only $a$'s in the right tail: $a_i=a$ for all $i>N$ for some $N\in \mathbb{Z}$. Any two words $\omega_1,\omega_2\in \Sigmal$ give the same state $\langle \omega_1 | = \langle \omega_2 | \in A(+)$ since for any $\omega'\in \Sigmar$, words $\omega_1\omega'$ and $\omega_2\omega'$ are either both in $L$ or both not in $L$ (this depends only on $\omega'$). 
 
 Consequently, $A(+)$ is a free rank one $\Bool$-module, $A(+)=\Bool\langle \omega_1|$ for any $\omega_1\in \Sigmal$. Likewise, $A(-)=\Bool |\omega'\rangle$ is a free rank one $\Bool$-module, where $\omega'=aaa\cdots\in \Sigmar$. 
 
 The language $L$ is not a closed subset of $\SigmaZ$ since the complement $\SigmaZ\setminus L$ is not open - it does not contain any basic open subsets of $\SigmaZ$ in the product topology. 
  $L$ is an example of an non-closed language with finite state spaces $A(-),A(+)$. Such examples (also see the example that follows) motivate restricting to closed languages when studying state spaces and related categories $\Cob^{\infty}_{\alpha}$. Likewise, symbolic dynamics restricts to studying closed $\Z$-invariants subspaces of $\SigmaZ$ (the latter are in a bijection with closed languages, according to our definition). 
\end{example}

\begin{example}\label{example_two} (A generalization of Example~\ref{ex_lang1}.) 
Consider an equivalence relation $\sim_{\h}$ on $\Sigma^{\h}$ where two words $\omega_1,\omega_2\in \Sigma^{\h}$ are equivalent if they have the same (infinite) head:
\begin{equation}
    \omega_1\sim_{\h} \omega_2 \ \ \Leftrightarrow \exists u_1,u_2\in \Sigma^{\ast}: \ \ \omega_i=u_i \omega, \ \ \omega\in \Sigma^{\h}.
\end{equation}
There are countably many elements in each equivalence class and the number of equivalence classes $\Sigma^{\h}/\sim_{\h}$ is uncountable. 

Choose a function $f:\Sigma^{\h}/\sim_{\h}\lra \Bool$ which is not identically $0$. To $f$, associate an evaluation 
\begin{equation}
    \alpha_{f} :\Sigma^{\Z}\lra \Bool, \ \ \alpha_{f}(\omega'\omega'') = f(\omega''), \hspace{0.5cm} 
    \omega'\in \Sigma^{\t},
    \hspace{0.5cm}
    \omega''\in \Sigma^{\h}. 
\end{equation}
Note that $\alpha_{f}$ depends only on the head of $\omega'\omega''$, where it is determined by $f$. 

The state space $A(+)$ of $\alpha_f$ is one-dimensional, $A(+)\cong \Bool v'$, with $\langle \omega'|=v'$ for any $\omega'\in \Sigma^{\t}$. The dual state space $A(-)=\Bool v''$ has 
$|\omega''\rangle=v''$ if and only if $f(\omega'')=1$. Otherwise, $|\omega''\rangle=0$. The pairing $A(+)\times A(-)\lra \Bool$ is $(v',v'')=1$.

Note that there are $2^{\aleph_0}$ such evaluations, one for each $f$ as above, since $\Sigma^{\h}/\sim_{\h}$ has cardinality $\aleph_0$. Each of these evaluations produces one-dimensional state spaces $A(+),A(-)\cong \Bool$. 

See~\cite{Sta83} and references therein for essentially this example. 
\end{example}

Pick a regular evaluation $\alphaZ:\SigmaZ\lra \Bool$, so that state spaces $A(+),A(-)$ are finite. Consider free $\Bool$-modules $\Bool^{A(+)}$ and $\Bool^{A(-)}$ with bases of all elements of these two finite sets. Right action of $\Sigma$ on the set $A(+)$ extends to a right action on the free module $\Bool^{A(+)}$. Likewise, left action $\Sigma$ on $A(-)$ extends to a left action on $\Bool^{A(-)}$. There is a natural bilinear pairing 
\begin{equation}
    (\:\:,\:\:) \ : \ \Bool^{A(+)} \times \Bool^{A(-)} \lra \Bool 
\end{equation}
induced by the corresponding pairing $A(+)\otimes A(-)\lra \Bool$.
We can now map words in $\Sigmal$ to $\Bool^{A(+)}$ by lifting the map to $A(+)$, and likewise for $\Sigmar$ and $\Bool^{A(-)}$. 

Let us now reverse the pairing $(\:\:,\:\:)$ on free modules to the pairing 
\begin{equation}
    (\:\:,\:\:)' \ : \ \Bool^{A(+)} \times \Bool^{A(-)} \lra \Bool ,
\end{equation}
so that  on basis vectors $(v,w)'=1$ if and only if $(v,w)=0$ and $(v,w)'=0$ if and only if $(v,w)=1$. Then the new pairing gives evaluation $\alphaZ'$ complementary to $\alpha$: 
\begin{equation}
    \alphaZ'(\omega)=1 \ \ \Leftrightarrow \alphaZ(\omega)=0, \ \  \omega \in \SigmaZ. 
\end{equation}
Consequently, an evaluation $\alpha$ is regular if and only if its complementary evaluation $\alpha'$ is regular.

\begin{theorem}
\label{theorem:regular_sofic}
For a closed evaluation $\alphaZ$ the following are equivalent: 
\begin{enumerate}
    \item\label{item:closedZ-ev-first} 
        The evaluation $\alphaZ$ is regular. 
    \item\label{item:closedZ-ev-second} 
        The state space $A(+)$ is finite. 
    \item\label{item:closedZ-ev-third} 
        The evaluation $\alphaZ$ is \emph{sofic}. 
\end{enumerate}
\end{theorem}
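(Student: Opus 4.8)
Since the evaluation $\alphaZ$ is by definition \emph{regular} exactly when $A(+)$ is finite, conditions \eqref{item:closedZ-ev-first} and \eqref{item:closedZ-ev-second} coincide, and the real content is their equivalence with soficity \eqref{item:closedZ-ev-third}. The plan is to realize $A(+)$ concretely as a join-semilattice of ``follower functions'' of left-infinite words, and then translate finiteness of this semilattice into regularity of the block language of the shift $X=\alphaZ^{-1}(1)$. First I would describe $A(+)$ explicitly: a basis diagram for $\Hom(\emptyset,+)$ is a semifloating interval labelled by $\omega\in\Sigmal$, and its pairing against the dual diagram labelled $\omega'\in\Sigmar$ is $\alphaZ(\omega\omega')$. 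Thus the state $\langle\omega|$ is recorded by the function $f_\omega\colon\Sigmar\lra\Bool$, $f_\omega(\omega')=\alphaZ(\omega\omega')$, and over $\Bool$ the module $A(+)$ is the join-closure of $\{f_\omega:\omega\in\Sigmal\}$. Hence $A(+)$ is finite if and only if there are finitely many distinct $f_\omega$, since each generator already lies in $A(+)$ while $k$ generators produce a closure of size at most $2^k$. Using that $\alphaZ$ is closed, I would also record that $W^{\perp}_{\f}$ equals the block language $\mathcal B(X)$ of finite words occurring in some word of $X$, so that soficity is precisely regularity of $\mathcal B(X)$.

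For \eqref{item:closedZ-ev-second}$\Rightarrow$\eqref{item:closedZ-ev-third} I would build the \emph{follower-set automaton}. Its state set is the finite set $Q=\{f_\omega\}$; the transition $\delta(f_\omega,a):=f_{\omega a}$ is well defined because $f_{\omega a}(\omega')=f_\omega(a\omega')$ depends only on $f_\omega$ and $a$; every state is taken to be initial, and the accepting states are those $f_\omega\neq\mathbf 0$, where $\mathbf 0$ is the zero follower function (the common value on left-infinite words carrying a forbidden subword, and an absorbing state of $\delta$). Reading $u=a_1\cdots a_n$ from the initial state $f_\tau$ lands in $f_{\tau u}$, and $f_{\tau u}\neq\mathbf 0$ for some $\tau$ exactly when $u$ occurs in some bi-infinite word of $X$; thus this finite automaton recognizes $\mathcal B(X)=W^{\perp}_{\f}$, which is therefore regular, so $\alphaZ$ is sofic.

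For \eqref{item:closedZ-ev-third}$\Rightarrow$\eqref{item:closedZ-ev-second} I would invoke a finite labelled-graph presentation $G$ of the sofic shift $X$ \cite{lind2021introduction,Kit98}. To a left-infinite word $\omega$ I associate the subset $S(\omega)\subseteq V(G)$ of vertices terminating a left-infinite path labelled $\omega$. Splitting a bi-infinite path at the vertex sitting between $\omega$ and $\omega'$ shows that $f_\omega(\omega')=1$ if and only if some vertex of $S(\omega)$ begins a right-infinite path labelled $\omega'$; hence $f_\omega$ is determined by $S(\omega)$. As $S(\omega)$ ranges over the finitely many subsets of $V(G)$, there are finitely many functions $f_\omega$, so $A(+)$ is finite.

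The main obstacle I anticipate is bookkeeping at the interface of finite and infinite words. Concretely, I must verify $W^{\perp}_{\f}=\mathcal B(X)$ from the closedness hypothesis (without it, Examples~\ref{ex_lang1} and~\ref{example_two} show $A(+)$ may be finite with no sensible soficity statement); I must confirm the well-definedness of $\delta$ on equivalence classes together with the correct role of the absorbing state $\mathbf 0$; and, in the last implication, I must justify the path-splitting, a König's-lemma style matching of a left-infinite and a right-infinite path at a common vertex, which is what lets the \emph{infinite} follower function $f_\omega$ be recovered from the \emph{finite} datum $S(\omega)$.
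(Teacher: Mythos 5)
Your proof is correct, and for two of the three implications it is essentially the paper's argument. The identification of (1) with (2) is definitional, and your follower-function automaton for (2) $\Rightarrow$ (3) is the complementary twin of the paper's construction: the paper takes states $S=A(+)\sqcup\{*\}$ with right-concatenation transitions, initial state $*$ feeding into the top element $\sum_{v\in A(+)}v$, and sole accepting state $0$, so that it recognizes $W$; you take all states initial and the nonzero states accepting, recognizing $\Sigma^{\ast}\setminus W=W^{\perp}_{\f}$. Either way regularity of $W$ follows. The genuine divergence is in (3) $\Rightarrow$ (2). The paper argues from a minimal DFA for $W^{\perp}_{\f}$: it attaches to $\omega\in\Sigmal$ the set $S^{+}_{\omega}$ of accepting states occurring infinitely often among partial executions, to $\sigma\in\Sigmar$ the set $S^{-}_{\sigma}$ of states from which every partial execution of $\sigma$ stays accepting, proves $\alphaZ(\omega\sigma)=(f^{+}(\omega),f^{-}(\sigma))$ (the delicate point being the finiteness/pigeonhole step showing $S^{+}_{\omega}\cap S^{-}_{\sigma}\neq\emptyset$ when $\alphaZ(\omega\sigma)=1$), and then verifies that $f^{\pm}$ descend to $\Sigma$-equivariant embeddings of $A(+)$ and $A(-)$ into $\Bool S_{\t}$ and its dual. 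You instead pass to a finite labelled-graph presentation of the sofic shift $X$ --- a classical fact the paper itself quotes later as Proposition 3.16 --- and bound the number of follower functions by $2^{|V(G)|}$; the path-splitting you flag as a worry is immediate from the definition of a presentation, and no K\"onig-type argument is actually needed there. Your route is shorter and offloads the analytic content onto a standard cited theorem; the paper's route is self-contained and yields, as a by-product, an explicit equivariant embedding of $A(+)$ into a free $\Bool$-module on the accepting states, which is additional structure. The one place to be explicit is that closedness of $\alphaZ$ is what makes $X=\alphaZ^{-1}(1)$ equal to $W^{\perp}$ and hence a sofic shift admitting a graph presentation; the identity $W^{\perp}_{\f}=\mathcal{B}(X)$ itself follows from $\Sigma^{\ast}W\Sigma^{\ast}=W$ and does not need closedness.
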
 
 
 \begin{proof}
The pairing $A(+)\times A(-)\lra \Bool$ is nondegenerate (i.e., separating). In particular, $A(+)$ is finite if and only if $A(-)$ is finite, in which case they have the same cardinality, $|A(+)|=|A(-)|$. This takes care of the equivalence between the first two descriptions above. 

To show \eqref{item:closedZ-ev-first} implies \eqref{item:closedZ-ev-third}, assume $A(+)$ is finite. Recall the map $\Sigmal\lra A(+)$ taking a semi-infinite word $\omega\in\Sigmal$ to the state $\langle\omega|$.  Let $S = A(+) \sqcup \{ * \}$. Define a transition function $\delta : S \times \Sigma \to S$ to be the semilinear extension of the right concatenation map $\delta(v, a) \mapsto v a$ if $v \in A(+)$, together with 
\begin{equation}
\delta(*, a) = \left(\sum_{v \in A(+)} v\right) a \, \in A(+), \ \  a\in \Sigma. 
\end{equation}
Note that $\sum_{v \in A(+)}v$ is the largest element in the finite semilattice $A(+)$. 
  We claim that the tuple $D = (S, \cdot, *, \{0\})$, where $*$ is the initial state and $0$ is the only accepting state, defines a DFA for $W$ in \eqref{eq_Wprime}. It is clear that the execution of a word $w \in W$ will lead to an accepting state of $D$ since $\langle\omega^-| w = \langle\omega^- w|=0$ for all $\omega^- \in \Sigmal$.  On the other hand, if $w \in \Sigma^* \setminus W$, then there exist semi-infinite words $\omega^- \in \Sigmal$ and $\omega^+ \in \Sigmar$ such that $\alpha(\omega^- w \omega^+) = 1$.   In particular $\langle\omega^- w |\neq 0$, so the execution of $w$ must be rejected by $D$.  Thus the language accepted by $D$ is precisely $W$, so the language $W$ is regular.
 
To show that \eqref{item:closedZ-ev-third} implies \eqref{item:closedZ-ev-first}, assume that the evaluation $\alphaZ$ is sofic.  Then the set of forbidden finite words $W \subset \Sigma^{\ast}$ forms a regular language.   Let $W^{\perp}_{\f}\subset \Sigma^{\ast}$, see \eqref{eq-Wperp}, denote the set of finite words containing no subword in $W$, which is regular since it is the complement of $\Sigma^{\ast} W \Sigma^{\ast}$ in $\Sigma^{\ast}$. 
Let $F = (S, \cdot, s_{\init}, S_{\t})$ be a minimal DFA for $W$, where $S$ is a finite set of states, $\cdot : \Sigma \times S \to S$ is the transition function, $s_{\init} \in S$ is the initial state, and $S_{\t} \subset S$ is the set of accepting states. Note that $F$ has a unique non-accepting state, so that $S\setminus S_{\t}$ has cardinality one.   The $\Bool$-semimodule $\Bool S_{\t}$ is equipped with right action of $\Sigma$ given by composing the transition function $\cdot$ with the map $S \to \Bool S_{\t}$ sending the non-accepting state to 0, and leaving accepting states unchanged.

 It suffices to construct an embedding $A(+) \lra \Bool S_{\t}$.   For a semi--infinite word 
     \[\omega=\cdots a_{-n} \cdots a_{-2}a_{-1} \in \Sigmal,
     \]
 let $S_{\omega}^{+} \subset S_{\t}$  denote the set of  accepting states that appear infinitely many times in its sequence of partial executions, i.e.
 $$ S_{\omega}^{+} = S_{\t} \cap \bigcap_{N=1}^\infty   
 \{ s_{\init} \cdot  a_{-n} a_{-n+1} \cdots a_{-1} \}_{n = N}^\infty. $$
 
 For $\sigma=\sigma_0\sigma_1\cdots  \in \Sigmar$, let $S_\sigma^{-} \subset S_{\t}$ denote the set of starting states from which all partial executions of $\sigma$ are accepting,  i.e.
$$ S_\sigma^{-} =   \bigcap_{n=0} ^\infty \left\{ s \in S_{\t} \mid s \cdot \sigma_0\sigma_1\cdots \sigma_n \in S_{\t} \right\}.$$  

Let $f^+:\Sigmal \lra \Bool S_{\t}$ be defined by 
\[
f^+(\omega) = \sum_{s \in S_\omega^+} s,
\]
and let $f^-: \Sigmar \lra (\Bool S_{\t})^*$ be defined by  
\[
f^-(\sigma) = \sum_{s \in S_\sigma^-} \delta_s.
\] 
Extend $f^+,f^-$ to $\Bool$-linear maps $f^+:\Bool \Sigmal \lra \Bool S_{\t}$ and $f^-:\Bool \Sigmar \lra (\Bool S_{\t})^*.$

We claim $\alpha(\omega \sigma) = (f^+(\omega), f^-(\sigma))$ for $\omega \in \Sigmal$ and $\sigma \in \Sigmar$, where $(\:\:,\:\:)$ denotes the pairing $\Bool S_{\t} \times (\Bool S_{\t})^{\ast}\lra \Bool$.  Suppose $\alpha(\omega \sigma) = 0$.  Then for all large $n$ the word $a_{-n} a_{-n+1} \cdots a_{-1} \sigma_{0} \cdots \sigma_n$ is rejected by the DFA $F$.  Thus $S_\omega^+ \cap S_\sigma^-= \emptyset$, which implies $(f^+(\omega), f^-(\sigma)) = 0$. On the other hand, if $\alpha(\omega \sigma) = 1$, then every finite substring of $\omega \sigma$ is accepted by the DFA $F$. In particular, the finiteness of $S_{\t}$ implies that $S_\omega^+$ must contain at least one state, which must also lie in $S_\sigma^-$.  This implies $(f^+(\omega),f^-(\sigma)) = 1$. 

The following holds:  
(a) $f^+$ intertwines the action of $\Sigma$ on $A(+)$ and $\Bool S_{\t}$, 
(b) $f^-$ intertwines the action of $\Sigma$ on $A(-)$ and $(\Bool S_{\t})^{\ast}$, 
(c) maps $f^+,f^-$ are  $\Bool$-semilinear.

For (a), let $b \in \Sigma$.  We have

\begin{align*}
f^+(\omega b)  & = \sum  \left\{s : s \in 
 \bigcap_{N=1}^\infty  
\left\{ s_{\init} \cdot  a_{-n} a_{-n+1} \cdots a_{-1} b \right\}_{n = N}^\infty \cap S_{\t}
\right 
\}  \\
 & = \sum \left\{s \cdot b : s \in    \bigcap_{N=1}^\infty  
 \left\{ s_{\init} \cdot  a_{-n} a_{-n+1} \cdots a_{-1} 
 \right\}_{n = N}^\infty   \cap  S_{\t}, \quad  s \cdot b \in S_{\t} \right \} \\
  & = f^+(\omega) b
\end{align*}

For (b), 
\begin{align*}
f^-(b \omega)  & = \sum  \left\{ \delta_t(x) :  t \in \bigcap_{n=0} ^\infty \left\{ s \in S_{\t} \mid s \cdot b \sigma_0\sigma_1\cdots \sigma_n \in S_{\t} \right\} \right\} \\
`& = \sum  \left\{ \delta_t(x \cdot b) :  t \in \bigcap_{n=0} ^\infty \left\{ s \in S_{\t} \mid s \cdot \sigma_0\sigma_1\cdots \sigma_n \in S_{\t} \right\} \right\} \\
 & =  \sum  \left\{ b \cdot \delta_{t} :  t \in \bigcap_{n=0} ^\infty \left\{ s \in S_{\t} \mid s \cdot \sigma_0\sigma_1\cdots \sigma_n \in S_{\t} \right\} \right\} \\
  & = b f^-(\omega) 
\end{align*}

Property (c) follows from the definitions of $f^+$ and $f^-$.

\vspace{0.07in}

Note that $f^+$ factors through the quotient map, $f^+: \Bool\Sigmal\lra A(+)\lra \Bool S_{\t}$, and likewise for $f^-$. 
We claim that $f^+$ descends to an embedding of $\Bool$-semimodules $A(+) \to \Bool S_{\t}$, and similarly for $f^-$:

\begin{equation*}
   \begin{tikzcd}
\Bool S_{\t}  \times (\Bool S_{\t})^{\ast}     \arrow{rr} & & \Bool \\
&  & \\ 
A(+)  \times A(-) \arrow[uu, "f^-" left, hookrightarrow, shift right=5]  \arrow[uu, "f^+" left, hookrightarrow,shift left=5]  \arrow{uurr}& &  
\end{tikzcd}  
\end{equation*}

To see that $f^+$ and $f^-$ descend to $A(+)$ and $A(-)$, respectively, suppose $f^+(\omega) \neq f^+(\omega')$ for some $\omega, \omega' \in \Sigmal$.  Then without loss of generality, we have $(f^+(\omega), \delta_s) = 1$ and $(f^+(\omega'), \delta_s) = 0$ for some state $s \in S_{\t}$.  By the minimality of $F$, there exists $\sigma \in \Sigmar$ such that  $s \in S_\sigma^{-}$.  Thus $\alpha(\omega \sigma) = f^+(\omega) f^-(\sigma) = 1$, but $\alpha(\omega' \sigma) = f^+(\omega) f^-(\sigma) = 0$.   Thus, $\omega \neq \omega'$ as elements of $A(+)$.

Similarly, suppose $f^-(\sigma) \neq f^-(\sigma')$ for some $\sigma, \sigma' \in \Sigmar$.  Then without loss of generality, there exists a functional $\delta_s \in S_\sigma^- \setminus S_{\sigma'}^-$.   Pick  any $\omega \in \Sigmar$ such that $s \in S_\omega^+$.  Then $\alpha(\omega \sigma) = 1$, but $\alpha(\omega \sigma') = 0$.  Thus $\sigma \neq \sigma'$ in $A(-)$.

To see that $f^+$ is an embedding, suppose that $\omega \neq \omega'$ for some $\omega, \omega' \in A(+)$.  Then without loss of generality, there exists $\sigma \in \Sigmar$ such that $\alpha(\omega \sigma)  \neq \alpha(\omega' \sigma)$.  Thus, $(f^+(\omega), f^-(\sigma)) \neq (f^+(\omega'), f^-(\sigma))$.  It follows from the nondegeneracy of the pairing $\Bool S_{\t} \times (\Bool S_{\t})^* \to \Bool$ that $f^+(\omega) \neq f^+( \omega')$.  A similar argument applies for $f^-$.
\end{proof}

We can restate the theorem above as follows. 
\begin{corollary}
 A closed evaluation $\alphaZ$ is regular if and only if it is sofic. 
\end{corollary}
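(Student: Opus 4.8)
The plan is to read this corollary off directly from Theorem~\ref{theorem:regular_sofic}. By definition a closed evaluation $\alphaZ$ is \emph{regular} exactly when the state space $A(+)$ is finite, so conditions~(1) and~(2) of the theorem are literally the same statement. The corollary therefore asserts only the equivalence of regularity (condition~(1)) with soficity (condition~(3)), which is precisely the equivalence the theorem establishes. Thus the whole argument amounts to unwinding the definition of ``regular'' and quoting the theorem.

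For the record, I would recall the two substantive implications bundled inside the theorem. Assuming $A(+)$ is finite, one manufactures a deterministic finite automaton on the state set $A(+)\sqcup\{\ast\}$ whose transition is right concatenation by letters of $\Sigma$, with $\ast$ the start state and $0\in A(+)$ the sole accepting state, and checks that it accepts exactly the forbidden language $W=W(\alphaZ)$ of \eqref{eq_Wprime}; hence $W$ is regular and $\alphaZ$ is sofic. Conversely, assuming soficity, one fixes a minimal DFA $F$ for $W$ (which has a unique non-accepting state) and builds $\Bool$-semilinear, $\Sigma$-equivariant maps $f^+$ and $f^-$ into $\Bool S_{\t}$ and $(\Bool S_{\t})^{\ast}$ intertwining the duality pairings; minimality of $F$ together with nondegeneracy of the pairing forces these maps to descend to embeddings of $A(+)$ and $A(-)$ into finite modules, so $A(+)$ is finite and $\alphaZ$ is regular.

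Since both directions are already in hand, no genuine obstacle remains and the corollary is a pure restatement. The only point worth emphasizing---were one to reprove the equivalence from scratch rather than cite the theorem---is the delicate heart of the reverse implication: one must verify that the set $S_\omega^+$ of accepting states recurring infinitely often along a left-infinite word, paired against the set $S_\sigma^-$ of accepting states all of whose finite executions of the right-infinite word stay accepting, faithfully computes the value $\alphaZ(\omega\sigma)$, and that these finite data separate the states of $A(+)$ and $A(-)$. Everything else is routine bookkeeping about the pairing $\Bool S_{\t}\times(\Bool S_{\t})^{\ast}\to\Bool$.
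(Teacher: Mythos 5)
Your proposal is correct and matches the paper exactly: the corollary is stated there as a direct restatement of Theorem~\ref{theorem:regular_sofic}, with no separate proof, and your observation that ``regular'' is by definition the finiteness of $A(+)$ (so conditions (1) and (2) coincide) is precisely the point. Your summary of the two substantive implications also faithfully reflects the paper's proof of the theorem.
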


\begin{theorem} \label{thm_inficirc} For an inficircular evaluation $\alpha=(\alphaZ,\alphac)$ with $\alphaZ$ closed the category $\Cob^{\infty}_{\alpha}$ has finite hom spaces if and only if $\alphaZ$ is sofic and $\alphac$ is regular. 
\end{theorem}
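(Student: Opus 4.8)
The plan is to reduce the statement to the two basic state spaces and then analyze $A(+-)$ in detail. A morphism space $\Hom_{\mcCinfalpha}(\varepsilon,\varepsilon')$ is identified, via the duality (cup/cap) morphisms of the category, with the state space $A(\varepsilon^{\ast}\varepsilon')=\Hom_{\mcCinfalpha}(\emptyset,\varepsilon^{\ast}\varepsilon')$; hence all hom spaces are finite if and only if all state spaces are, which by the definition of a regular inficircular evaluation recalled above is equivalent to $A(+)$ and $A(+-)$ being finite. Since $A(+)$ depends only on $\alphaZ$ and $\alphaZ$ is closed, Theorem~\ref{theorem:regular_sofic} gives that $A(+)$ is finite if and only if $\alphaZ$ is sofic, in which case $A(-)$ is finite of the same cardinality. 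It remains to prove that, \emph{assuming $\alphaZ$ sofic}, the space $A(+-)$ is finite if and only if $\alphac$ is regular.

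First I would fix the basis of $A(+-)$ given by floating-component-free cobordisms from $\emptyset$ to $+-$: either (i) an \emph{arc} joining the two outer points, decorated by a finite word $\omega\in\Sigma^{\ast}$, or (ii) a disjoint pair of semifloating intervals, one at each outer point, decorated by $(\omega_1,\omega_2)\in\Sigmal\times\Sigmar$. Next I would record the closures against test morphisms $g\in\Hom(+-,\emptyset)$, whose basis elements are caps decorated by $u\in\Sigma^{\ast}$ or test pairs of semifloating intervals. Closing an arc $\omega$ with a cap $u$ produces a floating circle with circular word $[\omega u]$, evaluated by $\alphac$; closing it with a test pair produces one doubly infinite interval $\eta_1\omega\eta_2$, evaluated by $\alphaZ$. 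Closing a pair $(\omega_1,\omega_2)$ with a cap $u$ produces the doubly infinite interval $\omega_1 u\omega_2$, and with a test pair produces two such intervals whose evaluations are multiplied in $\Bool$. Since $\cl(-,g)$ is $\Bool$-linear and $\alpha$ of a disjoint union is the join of the evaluations, an element of $A(+-)$ is determined by the function $g\mapsto\alpha(\cl(f,g))$, and the achievable functions form the $\vee$-closure of those attached to single basis elements; hence $A(+-)$ is finite as soon as the basis elements fall into finitely many \emph{types}, and conversely finiteness of $A(+-)$ forces finitely many types among the arcs.

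For the pairs, the closures above show that the type of $(\omega_1,\omega_2)$ depends only on the image $(\langle\omega_1|,|\omega_2\rangle)\in A(+)\times A(-)$, using $\langle\omega_1|\cdot u=\langle\omega_1 u|$ and the induced pairing; so under the sofic hypothesis the pairs contribute at most $|A(+)|\cdot|A(-)|$ types. For the arcs, the type of $\omega$ splits into a pair-test signature, which depends only on the operator of right multiplication by $\omega$ on the finite module $A(+)$ and so takes finitely many values, and a cap signature $u\mapsto\alphac([\omega u])$. Writing $L=\{w\in\Sigma^{\ast}\mid\alphac([w])=1\}$, a rotation-closed language, two arcs have equal cap signature exactly when they are Myhill--Nerode equivalent for $L$; hence the arcs realize finitely many cap signatures if and only if $L$ has finite Myhill--Nerode index, that is, if and only if $L$ is regular, which is the meaning of $\alphac$ being regular. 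Combining the two cases, under $\alphaZ$ sofic the basis of $A(+-)$ has finitely many types if and only if $\alphac$ is regular, which supplies the remaining equivalence and hence the theorem.

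The main obstacle I anticipate is the arc analysis: one must verify that the cap-signature relation is \emph{precisely} the Myhill--Nerode relation of $L$, paying attention to the orientation with which the cap word is appended and to the rotation-invariance of $\alphac$, and check that the finitely many pair-test signatures neither refine nor coarsen these classes in a way that breaks the equivalence. The second delicate point is the idempotent-semiring bookkeeping: one must argue that finitely many basis types yield a finite set-theoretic quotient through the $\vee$-closure, and, in the converse direction, that finiteness of $A(+-)$ does project down to finitely many cap signatures on the sub-collection of arcs, so that no circular information about $\alphac$ is lost.
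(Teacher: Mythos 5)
Your proposal is correct and follows essentially the same route as the paper: reduce to finiteness of $A(+)$, $A(-)$, $A(+-)$, invoke Theorem~\ref{theorem:regular_sofic} for $A(+)$ via closedness of $\alphaZ$, and then analyze the two spanning families of $A(+-)$ (pairs of semifloating intervals, controlled by $A(+)\times A(-)$, and arcs decorated by finite words, controlled by $\alphac$). The only difference is one of explicitness: where the paper defers the circular-word part to a citation of \cite[Proposition~3.19]{IK-top-automata} and to ``inspecting each relevant cobordism,'' you spell out the Myhill--Nerode argument for the rotation-closed language $\alphac^{-1}(1)$ and the join-closure bookkeeping, which is a faithful expansion rather than a different proof.
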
 
\begin{proof}
Suppose $\Cob^\infty_\alpha$ has finite hom spaces.  Then $\alphaZ$ is sofic by the previous theorem, and $\alphac$ is regular by  \cite[Proposition~3.19]{IK-top-automata}.

Conversely, suppose that $\alphaZ$ is sofic and $\alphac$ is regular.  It suffices to show that $A(+)$, $A(-)$, and $A(+-)$ are finite since this implies $A(\varepsilon)$ is finite for an arbitrary sign sequence $\varepsilon$.    By the previous theorem, $A(+)$ and $A(-)$ are finite. 

\begin{figure}
    \centering
\begin{tikzpicture}[scale=0.6]
\begin{scope}[shift={(0,0)},decoration={markings,mark=at position 0.85 with {\arrow{>}}}]

\node at (1,4.5) {$+$};
\node at (3,4.5) {$-$};

\draw[thick,postaction={decorate}] (1,1) -- (1,4);
\draw[thick,postaction={decorate}] (3,4) -- (3,1);

\node at (0.25,2.5) {$\omega_1$};
\node at (3.75,2.5) {$\omega_2$};

\draw[thick,dashed] (0,4) -- (4,4);

\draw[thick,fill] (1.15,2.5) arc (0:360:1.5mm);
\draw[thick,fill] (3.15,2.5) arc (0:360:1.5mm);
\end{scope}

\begin{scope}[shift={(8,0)},decoration={markings,mark=at position 0.85 with {\arrow{>}}}]

\node at (1,4.5) {$+$};
\node at (3,4.5) {$-$};

\draw[thick,dashed] (0,4) -- (4,4);

\draw[thick,postaction={decorate}] (3,4) .. controls (2.85,1) and (1.15,1) .. (1,4);

\draw[thick,fill] (2.15,1.7) arc (0:360:1.5mm);
\node at (2,1.15) {$\omega$};

\end{scope}

\end{tikzpicture}
    \caption{Elements $\uparrow\downarrow \!\!(\omega_1,\omega_2)$ and $\circleft(\omega)$ span $A(+-)$.}
    \label{fig_sofic-0007}
\end{figure}

\vspace{0.07in}

$A(+-)$ is spanned by elements $\uparrow\downarrow \!\!(\omega_1,\omega_2)$ and $\circleft(\omega)$ for $\omega_1\in \Sigma^{\t},\omega_2\in\Sigma^{\h}$ and $\omega\in \Sigma^{\ast}$, see Figure~\ref{fig_sofic-0007}. Elements of the first type span the image of $A(+)\otimes A(-)$ in $A(+-)$, which is a quotient of $A(+)\otimes A(-)$, hence a finite subsemilattice of $A(+-)$.  

By inspecting each relevant cobordism for $A(+-)$, one finds that $A(+-)$ must be finite, using the regularity of $\alphac$ and the finiteness of $A(+)$ and $A(-)$. 
\end{proof}

\subsection{Decomposition of the identity}

$\quad$ 

A sofic  evaluation $\alphaZ$ on (doubly)-infinite words gives rise to finite state spaces $A(+)$ and $A(-)\cong A(+)^{\ast}$. Suppose furthermore that $A(+)$ is a projective $\Bool$-module, that is, a retract of the free finite semimodule. This means that for some $n$ there exists semimodule maps  
\begin{equation}
    \Bool^n \stackrel{p}{\lra} A(+)\stackrel{\iota}{\lra} \Bool^n, \hspace{1cm} 
    p\circ \iota = \id_{A(+)}.  
\end{equation}
Then there are natural coevaluation and evaluation maps 
\[
\mathsf{coev} : \Bool \lra A(+)\otimes A(-),  \hspace{1cm} 
\mathsf{ev}: A(+)\otimes A(-)\lra \Bool 
\]
that satisfy isotopy relations and give rise to a Boolean 1D TQFT (without defects and inner endpoints).  Coevaluation and evaluation maps are associated to the \emph{cup} and \emph{cap} cobordisms in Figure~\ref{figure-2}. The evaluation can be defined more generally for any finite $\Bool$-semimodule $A(+)$ but coevaluation requires projectivity, see~\cite[Section 3.2]{IK-top-automata} and~\cite{GIKKL23}. 

\begin{prop}\label{prop_dec_id}
    If $\alphaZ$ is sofic and $A(+)$ is projective, there is a Boolean TQFT functor 
    \begin{equation}
        \mcF_{\alphaZ} \ : \ \Cob_{\Sigma}^{\infty} \lra \Bool\dmod
    \end{equation}
    which evaluates floating intervals via  $\alphaZ$ and a circle with a word $\omega\in \Sigma^{\ast}$ on it to the trace $\tr_{A(+)}(\omega)$ of action of $\omega$ on  $A(+)$. 
\end{prop}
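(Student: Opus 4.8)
The plan is to promote the lax monoidal functor $\mcF_\alpha$ built in the previous subsection to a genuine symmetric monoidal functor (a TQFT), the only missing ingredient being the tensor product axiom, which the projectivity of $A(+)$ will supply. Throughout I work with the fixed retract data $\Bool^n \stackrel{p}{\lra} A(+) \stackrel{\iota}{\lra} \Bool^n$, $p\circ\iota=\id_{A(+)}$, provided by the projectivity hypothesis.

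First I would fix the circular evaluation, since the hypothesis only names $\alphaZ$. As $A(+)$ is finite, each word $\omega\in\Sigma^\ast$ induces a $\Bool$-linear endomorphism $m_\omega$ of $A(+)$ by composing the letter-actions, and I set $\alphac(\omega):=\tr_{A(+)}(m_\omega)$. Cyclicity of the trace over the commutative semiring $\Bool$ gives $\tr(m_{\omega_1}m_{\omega_2})=\tr(m_{\omega_2}m_{\omega_1})$, so $\alphac$ descends to $\Sigmac$, i.e.\ is rotation-invariant; it is regular because $A(+)$ is finite. This yields an inficircular evaluation $\alpha=(\alphaZ,\alphac)$ to which the constructions of the preceding subsections apply, and it is the unique choice consistent with the claimed trace formula.

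Next I would assemble the duality data. By Theorem~\ref{theorem:regular_sofic} the state spaces $A(+),A(-)$ are finite, the pairing $A(+)\times A(-)\lra\Bool$ is nondegenerate (separating), and $A(-)\cong A(+)^\ast$. The evaluation $\ev$ is this pairing and exists for any finite $A(+)$; the coevaluation $\coev:\Bool\lra A(+)\otimes A(-)$ is built from the retract $(p,\iota)$ exactly as in~\cite[Section~3.2]{IK-top-automata}, and this is the sole place where projectivity is used. The decisive step is then to verify the two zig-zag (snake) identities for $(\ev,\coev)$, equivalently that the natural map $A(+)\otimes A(-)\lra A(+-)$ is an isomorphism with the defect-free cup cobordism in its image (the image of $\coev$). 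This is the decomposition of the identity, and it forces the tensor product axiom $A(\varepsilon)\cong A(\varepsilon_1)\otimes\cdots\otimes A(\varepsilon_n)$ for every sign sequence $\varepsilon$, upgrading $\mcF_\alpha$ to a strong symmetric monoidal functor $\mcF_{\alphaZ}:\Cob^{\infty}_{\Sigma}\lra\Bool\dmod$. I would finish by reading off the stated values: a floating interval decorated by $\omega\in\SigmaZ$ is evaluated by cutting it into a $\Sigmal$-half and a $\Sigmar$-half, which under $\mcF_{\alphaZ}$ is exactly the pairing $\langle\omega_-|\omega_+\rangle=\alphaZ(\omega_-\omega_+)$, so floating intervals are evaluated via $\alphaZ$ by construction; and a circle carrying $\omega\in\Sigma^\ast$ factors as $\ev\circ(m_\omega\otimes\id)\circ\coev$, which in a rigid symmetric monoidal category equals $\tr_{A(+)}(m_\omega)$, matching $\alphac$.

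The main obstacle is the zig-zag verification in the duality step: one must check that the coevaluation assembled from the projective retract is genuinely compatible with cobordism gluing, so that composing a $\Sigmal$-decorated half-interval with a $\Sigmar$-decorated half-interval through the cup and cap reproduces $\alphaZ$ on the glued doubly-infinite word. The favorable point is that the cup and cap themselves carry no accumulating defects and hence live entirely in the ``finite'' duality already governed by the nondegenerate pairing; combining its separating property with the retract identity $p\circ\iota=\id_{A(+)}$ yields the snake identities. Pinning down precisely this interface between the infinite evaluation $\alphaZ$ and the finite duality data is the delicate part of the argument.
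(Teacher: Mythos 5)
Your proposal is correct and follows essentially the same route as the paper, which itself only sketches this proposition by deferring to the finite-defect case in \cite[Section 4.4]{IK-top-automata}: projectivity of $A(+)$ supplies the coevaluation via the retract $\Bool^n \to A(+) \to \Bool^n$, the nondegenerate pairing supplies the evaluation, the resulting decomposition of the identity forces the tensor product axiom, and floating intervals and circles then evaluate to the pairing (hence $\alphaZ$) and to $\tr_{A(+)}(\omega)$ respectively. You correctly isolate the zig-zag verification as the point where projectivity enters and where the accumulating-defect half-intervals must interface with the finite duality data, which is exactly the content the paper leaves to the reader.
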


In this situation we say that $\alphaZ$ admits a \emph{decomposition of the identity}. Detailed explanations for this proposition in the case of finitely many defects on intervals can be found in~\cite[Section 4.4]{IK-top-automata}; the accumulating defects case is similar and left to the reader. Note that a finite semilattice $A(+)$ is projective if it is a distributive lattice, see~\cite[Proposition 3.8]{IK-top-automata} and references therein.

In examples in Section~\ref{sec_examples_sofic} we compute TQFT extension of sofic evaluations $\alphaZ$ when $A(+)$ is projective. 

\vspace{0.07in} 

A more general class of TQFT liftings of a sofic shift exists, even when $A(+)$ is not projective. Recall the following result, which is Proposition 3.3 in~\cite{BBE21}. 

\begin{prop} A shift $X$ is sofic if and only if there is a finite automaton $(Q)$ such that $X=L_{(Q)}$. 
\end{prop}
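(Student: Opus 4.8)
The plan is to prove both implications by translating between the forbidden-word description of a sofic shift and the bi-infinite-path description coming from a finite labelled graph. Here I read $L_{(Q)}$ as the set of bi-infinite sequences obtained by reading edge labels along bi-infinite walks in the underlying $\Sigma$-labelled directed graph $G$ of the automaton $(Q)$; this is the standard symbolic-dynamics meaning that makes the equality $X=L_{(Q)}$ of two subsets of $\SigmaZ$ typecheck.

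For the direction that an automaton presentation yields a sofic shift, I would start from $(Q)$ with $X=L_{(Q)}$ and first \emph{trim} $G$: iteratively delete any state admitting no infinite forward walk or no infinite backward walk, leaving a finite graph $G'$ on which every finite walk extends to a bi-infinite one, without changing $L_{(Q)}$. The set $\mathcal{B}(X)$ of finite factors of points of $X$ is then exactly the set of finite words labelling walks in $G'$, which is regular since it is recognized by $G'$ with every state both initial and accepting. Setting $W:=\Sigma^{\ast}\setminus\mathcal{B}(X)$, which is regular as the complement of a regular language, one verifies $X=W^{\perp}$ using the standard fact that a point lies in a shift space precisely when all of its finite subwords are factors of the shift. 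Hence $X$ is sofic.

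For the converse, suppose $X=W^{\perp}$ with $W$ regular. As already observed in the proof of Theorem~\ref{theorem:regular_sofic}, the language $W^{\perp}_{\f}=\Sigma^{\ast}\setminus\Sigma^{\ast}W\Sigma^{\ast}$ of allowed finite words is then regular and factorial. I would build the follower-set graph $G$: take as states the finitely many distinct follower sets $F(u)=\{v\in\Sigma^{\ast}: uv\in W^{\perp}_{\f}\}$, and draw an edge labelled $a$ from $F(u)$ to $F(ua)=a^{-1}F(u)$ whenever $a\in F(u)$; this is well defined because $F(ua)$ depends only on $F(u)$ and $a$. By construction a finite word labels a walk in $G$ iff it lies in $W^{\perp}_{\f}$, so after trimming the dead states the bi-infinite label sequences of $G$ are exactly the bi-infinite words all of whose factors avoid $W$, that is $W^{\perp}=X$. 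Declaring all states initial and accepting turns $G$ into a finite automaton $(Q)$ with $L_{(Q)}=X$.

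The main obstacle in both directions is the passage between finite and bi-infinite walks: one must show that each $\omega\in X$ is realized by a genuine bi-infinite walk, not merely that every finite factor of $\omega$ labels a finite walk. I expect to handle this by a compactness (K\"onig's lemma) argument: for each $n$ the factor $\omega_{[-n,n]}$ labels a walk in the finite trimmed graph, and since there are only finitely many states these finite walks can be threaded through a common sequence of states to assemble a bi-infinite walk labelled by $\omega$. Trimming the graph beforehand, so that every finite walk extends infinitely in both directions, is precisely what makes this threading possible and is the step I anticipate requiring the most care.
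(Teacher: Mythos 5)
Your argument is correct, but note that the paper does not actually prove this proposition: it is quoted verbatim as Proposition~3.3 of the cited reference \cite{BBE21}, so there is no in-paper proof to compare against. What you have written is the standard symbolic-dynamics proof (essentially the one in Lind--Marcus): trimming the labelled graph to an essential one so that finite walks extend bi-infinitely, observing that the factor language of $L_{(Q)}$ is then recognized by the graph with all states initial and accepting, and, in the converse direction, building the follower-set presentation of the factorial regular language $W^{\perp}_{\f}$ and threading finite walks into a bi-infinite one by K\"onig's lemma. You correctly identify the only genuinely delicate point, namely that ``every factor of $\omega$ labels a finite walk'' must be upgraded to ``$\omega$ labels a bi-infinite walk,'' and the pigeonhole/compactness threading you sketch does handle it (in both directions, since the same issue arises when verifying $X=W^{\perp}$ for $W:=\Sigma^{\ast}\setminus\mathcal{B}(X)$). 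One cosmetic point: when forming the follower-set graph you should restrict to the nonempty follower sets $F(u)$ with $u\in W^{\perp}_{\f}$ (equivalently, discard the empty follower set as a dead state), which your edge condition $a\in F(u)$ and subsequent trimming already accomplish. The proof is complete as a roadmap and would be accepted as a full proof once the K\"onig's lemma step is written out.
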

Here $L_{(Q)}$ is the set of bi-infinite paths in $Q$ and automaton $(Q)$ is non-deterministic. Initial and terminal states in $(Q)$ are not specified, since they are not needed to describe bi-infinite paths. 

Given such $(Q)$, we build a TQFT $\mcF_{(Q)}$ extending evaluation $\alphai:\SigmaZ\lra \Bool$  associated with $X$ as follows. To $+$ and $-$ oriented $0$-manifolds $\mcF_{(Q)}$ associates free $\Bool$-modules $\Bool Q, \Bool Q^{\ast}$, respectively, see \eqref{align_1}. To the cup and cap cobordisms in Figure~\ref{figure-2} associate the coevaluation and evaluation maps as in \eqref{align_1}. 

To a left-infinite word $\omega=\cdots a_{-2}a_{-1}\in \Sigmal$ associate the element $\mcF(\omega)\in \Bool Q$ which is the sum over all states $q$ such that $(Q)$ contains a semi-infinite path $\omega$ terminating in $q$. 

To a right-infinite word $\omega'=a_0a_1a_2\cdots \in \Sigmar$ associate the element $\mcF(\omega')\in \Bool Q^{\ast}$ which is the sum over all $q^{\ast}$ such that there is a right-infinite path $\omega'$ in $(Q)$ starting in $q$. 

These assignments are consistent and give rise to a TQFT \[\mcF_{(Q)}: \Cob_{\Sigma}^{\infty}\lra \Bool\mathsf{-fmod}
\]
valued in free $\Bool$-modules. A floating interval carrying a bi-infinite word $\omega$ evaluates to $1$ if and only if $\omega \in X=L_{(Q)}$. A circle carrying a finite word $\omega$ evaluates to $\tr_{\Bool Q}(\omega)$, the trace of operator $
\omega$ on $\Bool Q$. The latter is $1$ if and only if there is a closed path $\omega$ in $(Q)$. Evaluation of decorated circles defines a regular circular language $L_{\circ}$ which depends on the automaton $(Q)$ and not only on $X$.

%

\section{Examples of state spaces for sofic systems} \label{sec_examples_sofic}

Denote by $\ast\omega\in \Sigma^{\t}$ a left-infinite word that ends in $\omega$ and does not contain 
any prohibited words. This notation is only used in the case when the element $\langle \ast \omega |\in A(+)$ is well-defined, that is, does not depend on a choice of completion of $\omega$ to such an left-infinite word. Notation $\omega\ast\in \Sigma^{\h}$ is used likewise, with $|\omega \ast\rangle$ to denote an element of $A(-)$.

\begin{example}
\label{ex:sofic-system-even-number-of-b}

Let $W=ab(b^2)^{\ast}a$ consist of odd-length products of $b$ surrounded by $a$. 
The language $W^{\perp}\subset \SigmaZ$ associated with $W$ consists of infinite words $\omega$ that do not contain any subwords of the form  $ab^{2n+1}a, n\ge 0$. 

Consider the following  states in $A(+)$, where 
$b^{-\infty}:= \cdots bbb$:
\[v_1 = \langle \ast a b^{2n+1}|, \hspace{1cm}
v_2 = \langle \ast a b^{2n}|, \hspace{1cm}
v_3 = \langle b^{-\infty} | . 
\] 
Here $v_1$, for instance, is a non-zero state represented by a word with an odd number of $b$'s at its end ($v_1$ does not depend on the choice of $n\ge 0$). These three states, clearly, span $A(-)$.   

Consider the following states in $A(-)$, where $b^{+\infty}:=bbb\cdots$: 
\[v_1' = | b^{2m+1} a\ast \rangle, \hspace{1cm} 
v_2' = | b^{2m} a \ast \rangle,  \hspace{1cm}
v_3' = | b^{+\infty} \rangle. 
\]
The matrix of the bilinear pairing on these spanning vectors is shown in Figure~\ref{sofic-0005}.  

\begin{figure}
    \centering
\begin{tikzpicture}[scale=0.6]
\begin{scope}[shift={(0.5,0)}]
\node at (0.25,3) {$v_1=\langle \ast ab^{2n+1}|$};
\node at (0,2) {$v_2=\langle \ast ab^{2n}|$};
\node at (-0.25,1) {$v_3=\langle b^{-\infty}|$};

\node at (7.25,3) {$v_1' = |b^{2m+1} a\ast \rangle $};
\node at (7.00,2) {$v_2' = |b^{2m} a \ast \rangle$};
\node at (6.60,1) {$v_3' = | b^{\infty} \rangle $};

\node at (3.375,-1) {$k,m,n\geq 0$};

\end{scope}

\begin{scope}[shift={(14.5,0.5)}]
\draw[thick] (0,0) grid (3,3);
\node at (0.5,2.5) {$1$};
\node at (0.5,1.5) {$0$};
\node at (0.5,0.5) {$1$};

\node at (1.5,2.5) {$0$};
\node at (1.5,1.5) {$1$};
\node at (1.5,0.5) {$1$};

\node at (2.5,2.5) {$1$};
\node at (2.5,1.5) {$1$};
\node at (2.5,0.5) {$1$};


\node at (0.5,3.5) {$v_1$};
\node at (1.5,3.5) {$v_2$};
\node at (2.5,3.5) {$v_3$};

\node at (-0.5,2.5) {$v_1'$};
\node at (-0.5,1.5) {$v_2'$};
\node at (-0.5,0.5) {$v_3'$};

\end{scope}

\begin{scope}[shift={(-0.45,-5)}]
\node at (0,3) {$(v_1,v_1')=\langle \ast ab^{2(m+n+1)}a \ast \rangle=1$};
\node at (0,2) {$(v_1,v_2')=\langle \ast ab^{2(m+n)+1}a \ast \rangle=0$};
\node at (-0.4,1) {$(v_1,v_3')=\langle \ast ab^{2n+1} b^{\infty}\rangle =1$};

\node at (9.15,3) {$(v_2,v_1')=\langle \ast ab^{2(n+m)+1}a \ast \rangle=0$};
\node at (8.9,2) {$(v_2,v_2')=\langle \ast ab^{2(m+n)}a \ast \rangle=1$};
\node at (8.55,1) {$(v_2,v_3')=\langle \ast ab^{2n}  b^{\infty} \rangle=1$};

\node at (17.95,3) {$(v_3,v_1')=\langle  b^{-\infty} b^{2m+1}a \ast \rangle=1$};
\node at (17.7,2) {$(v_3,v_2')=\langle  b^{-\infty}b^{2m}a \ast \rangle=1$};
\node at (17.35,1) {$(v_3,v_3')=\langle  b^{-\infty}  b^{\infty}  \rangle=1$};

\end{scope}

\end{tikzpicture}
    \caption{Bilinear form for  Example~\ref{ex:sofic-system-even-number-of-b}. Spanning vectors and the matrix of the form in the top row and computation of the entries below that. 
    }
    \label{sofic-0005}
\end{figure}

\begin{figure}
    \centering
\begin{tikzpicture}[>=stealth',shorten >=1pt,auto,node distance=2cm]
\node[thick,state] (v1) {$v_1$}; 
\node[thick] (xblank) [right of=v1] {};
\node[thick,state] (v3) [above of=xblank] {$v_3$};
\node[thick,state] (v2) [right of=xblank] {$v_2$};
\path[thick,->] (v1) edge [bend left=20] node {$b$} (v2);
\path[thick,->] (v2) edge node [xshift= 21pt,yshift=7pt] {$a$} (v3);
\path[thick,->] (v3) edge node [xshift=-18pt,yshift=9pt] {$a$} (v1);
\path[thick,->] (v2) edge [bend left=20] node {$b$} (v1);
\path[thick,->] (v3) edge [loop right] node {$a$} (v3);

\end{tikzpicture}
    \caption{Action of $\Sigma$ on nonzero elements of $A(+)$ in  Example~\ref{ex:sofic-system-even-number-of-b}. Note that $av_1=0$, so there is no $a$-arrow out of $v_1$. }
    \label{sofic-0004}
\end{figure}
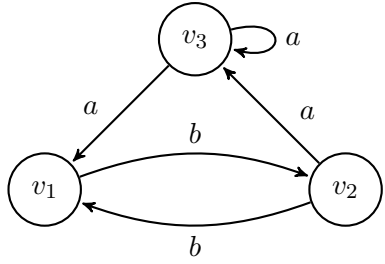

We see that $v_3=v_1+v_2$,  and $A(+)$ is a free $\Bool$-module with a basis $Q=(v_1,v_2)$. Likewise, $v_3'=v_1'+v_2'$, and $A(-)$ is a free $\Bool$-module with a basis $(v_1',v_2')$: 
\[A(+)=\Bool v_1 \oplus \Bool v_2,   \hspace{1cm}
A(-)=\Bool v_1'\oplus \Bool v_2'. 
\] 
Figure~\ref{sofic-0004} shows the action of $\Sigma=\{a,b\}$ on the three nonzero vectors  $v_1,v_2,v_3$ of $A(+)$. 
Figure~\ref{sofic-0006} shows the minimal nondeterministic automaton for $W^{\perp}$, with $v_1,v_2$ corresponding to the states. Letter $a$ acts by $0$ on $v_1$, and this is shown by a dotted arrow to the state $0$. Normally, state $0$ and arrows to it are excluded from an NFA. Here and below in this section we depict NFAs without initial and terminal states, only using them to describe sofic languages of doubly-infinite words as well as circular languages. 

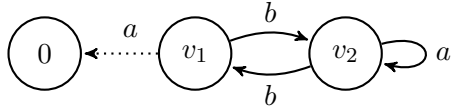
\begin{figure}
    \centering
\begin{tikzpicture}[>=stealth',shorten >=1pt,auto,node distance=2cm]
\node[thick,state] (v1) {$v_1$};
\node[thick,state] (v2) [right of=v1] {$v_2$}; 
\node[thick,state] (0) [left of=v1] {$0$};

\path[thick,->] (v1) edge [bend left=20] node {$b$} (v2);
\path[thick,->] (v2) edge [bend left=20] node {$b$} (v1);
\path[thick,->] (v2) edge [loop right] node {$a$} (v2); 
\path[thick,->,dotted] (v1) edge node [xshift=4pt,yshift=15pt] {$a$} (0);

\end{tikzpicture}
    \caption{Minimal nondeterministic automaton for the subshift in Example~\ref{ex:sofic-system-even-number-of-b}.  $A(-)=\Bool v_1 \oplus \Bool v_2$ and $A(+)=\Bool v_1'\oplus \Bool v_2'$. 
    }
    \label{sofic-0006}
\end{figure}

Since the state space $A(+)$ is a free $\Bool$-module, it is also a projective $\Bool$-module, and the decomposition of the identity is given by 
\[\id_+=v_1\otimes v_1'+v_2\otimes v_2'.
\] 
The associated circular language $L_{\circ}\subset \Sigmac$ for this sofic evaluation $\alphaZ$ is described by the evaluation 
\begin{equation} \alpha_{\circ}(\omega) = \alphaZ(v_1\omega v_1')+ 
\alphaZ(v_2\omega v_2') = \alphaZ(\ast ab\omega b a\ast) + \alphaZ (\ast a \omega a \ast), \ \ \omega\in \Sigma^{\ast}. 
\end{equation} 
Language $L_{\circ}$ consists of circular words $\omega$ that do not contain any odd-length products of $b$'s surrounded by $a$'s. In particular,  $b^{2n+1}\notin L_{\circ}, n\ge 0$. 
      
This construction gives a one-dimensional accumulating defect TQFT with 
\[A(+)=\Bool Q, \ \ A(-)=A(+)^{\ast}=\Bool Q^{\ast},
\]
the infinite words language $W^{\perp}$ and circular language $L_{\circ}$ as above.

\end{example}

\begin{example} Let $\alphaZ=\SigmaZ$ for $\Sigma=\{a,b\}$. Then $W=\emptyset$ is the empty language. The state spaces $A(+)\cong A(-)\cong \Bool$ and words  $\omega \in \Sigmal$, $\omega'\in \Sigmar$ give nonzero elements $\langle \omega |\in A(+)$ and $|\omega' \rangle  \in A(-)$, respectively, for any $\omega,\omega'$. 
\end{example}

\begin{example}
\label{ex:sofic-system-000}
Let $\Sigma=\{a,b\}$, $W=\{ab,ba\}$ and $W^{\perp}=\{a^{\Z},b^{\Z} \}$ consists of two words. The state space $A(+)$ is a free rank two $\Bool$-module, with the basis $\{a^{-\infty},b^{-\infty}\}$.  The minimal NFA is shown in  Figure~\ref{sofic-0024}.  
The action of $a$, $b$ on $A(+)$ is given by 
\[  
a = 
\begin{pmatrix}
1 & 0 \\
0 & 0 \\ 
\end{pmatrix}, 
\qquad 
b = 
\begin{pmatrix}
0 & 0 \\
0 & 1 \\ 
\end{pmatrix}, 
\qquad 
\mbox{ with } ab=ba=0. 
\] 
This evaluation $\alphaZ$ extends to a TQFT with the circular language $L_{\circ} = a^{\ast}+b^{\ast}$. 

\begin{figure}
    \centering
\begin{tikzpicture}[scale=0.6]

 

\begin{scope}[shift={(12,0)}, decoration={markings,mark=at position 0.5 with {\arrow{>}}}]
\node at (-1,3.0) {$a$}; 
\draw[thick,postaction={decorate}] (-0.25,2.35) .. controls (-3.00,3.5) and (-3.00,0.5) .. (-0.25,1.65);

\draw[thick] (0.35,2) arc (0:360:3.5mm);
\draw[thick,<-] (0.0,1.6) -- (0.0,1);

\draw[thick] (3.35,2.0) arc (0:360:3.5mm);
\draw[thick,<-] (3.0,1.6) -- (3.0,1);

\node at (4,3) {$b$};
\draw[thick,postaction={decorate}] (3.25,2.35) .. controls (6,3.5) and (6,0.5) .. (3.25,1.65);
\end{scope}

\end{tikzpicture}
    \caption{Minimal NFA for Example~\ref{ex:sofic-system-000}.}
    \label{sofic-0024}
\end{figure}
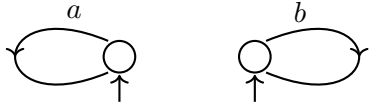

\end{example}

\begin{remark}
Given a sofic shift $X$, suppose that the state space $A(+)$ is a free $\Bool$-module. Then the minimal NFA for 
    $W^{\perp}$ is unique, with its states given by the basis of $A(+)$ and transition function described by the action of $\Sigma^{\ast}$ on $A(+)$. 
\end{remark}

\begin{remark}
 Switching to non-minimal automata describing the shift in 
 Example~\ref{ex:sofic-system-000}, 
 it is straightforward to produce TQFTs with  this $\alphaZ$ with the circular language $L_{\circ}=(a^n)^{\ast}+(b^m)^{\ast}$ and, more generally, 
 with $L_{\circ}$ given by Boolean sums of circular languages $(a^n)^{\ast}$ and $(b^m)^{\ast}$ for finite collections of $n$'s and $m$'s. 
\end{remark}

\begin{example}\label{ex_golden}
Let $\Sigma=\{a,b\}$ and $W=\{bb\}$. Then $X$ consists of infinite words that do not contain two consecutive $b$'s and is called the \emph{golden mean shift}. The state space $A(+)$ is spanned by vectors $\langle \ast a|$ and $\langle \ast ab|$, with the relation
\[
\langle \ast a| + \langle \ast ab| = \langle \ast a| . 
\]
Thus, $A(+)$ is a projective but not a free semilattice, the quotient of the free semilattice on two generators $\{x,y\}$ by the relation $x+y=x$.
The dual module $A(-)$ is spanned by $|a\ast\rangle$ and $|ba\ast \rangle$, with the relation 
\[
|a\ast\rangle + |ba\ast \rangle = |a\ast\rangle. 
\]
Since $A(+)$ is projective, decomposition of the identity exists. It is given by 
\[
\id_+ \ = \ \langle \ast a | \otimes |ba\ast\rangle + \langle \ast ab | \otimes |a \ast \rangle. 
\]
To compute the associated circular language, evaluate a finite word $\omega$ via 
\[
\omega \longmapsto \langle \ast a \omega ba\ast\rangle +  \langle \ast ab \omega a \ast \rangle.
\]
The associated circular language consists of circular words that do not contain two consecutive $b$'s: 
\begin{equation}\label{eq_golden}
L_{\circ}=\Sigma^{\ast}\setminus \left( (a+b)^{\ast}b^2(a+b)^{\ast}+b(a+b)^{\ast}b\right) .
\end{equation}

\vspace{0.07in} 

\begin{figure}
    \centering
\begin{tikzpicture}[>=stealth',shorten >=1pt,auto,node distance=2cm]
    
\node[thick,state] (one) {$1$};
\node[thick,state,right of=one] (two) {$2$};

\path[thick,->] 

(one) edge [bend left=15] node{$b$} (two)
(two) edge [bend left=15] node[xshift=0,yshift=0pt]{$a$} (one) 
(one) edge[loop left] node{$a$} (one);

\end{tikzpicture}
    \caption{A minimal NFA for the golden mean shift.}
    \label{fig_sofic-0008}
\end{figure}
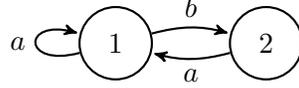

\vspace{0.07in} 

The golden mean shift consists of bi-infinite words that admit a path in the automaton in Figure~\ref{fig_sofic-0008} on the left. This graph gives rise to a TQFT with $A(+)=\Bool v_1\oplus \Bool v_2$ and the circular language given by \eqref{eq_golden}. Another minimal NFA for the same infinite language (and producing the same circular language) can be obtained by adding an $a$-loop to state $2$. Taking a finite cover of this graph or modifying it in other ways while preserving $\alphaZ$ leads to TQFTs with the same infinite evaluation function $\alphaZ$ but different circular languages. For instance, taking the cyclic $n$-cover of Figure~\ref{fig_sofic-0008} automata along the 1-2 loop gives a circular language $L'_{\circ}$ which consists of words in $L_{\circ}$ in \eqref{eq_golden} where the number of $b$'s is $0$ modulo $n$.
\end{example}

\begin{example}\label{example_nondiss}
Let $W=\{a^3,b^2\}$. The language $L_{\Z}=W^{\perp}$ consists of bi-infinite words $\omega$ that contain neither $a^3$ nor $b^2$ as a subword. The state space $A(+)$ is spanned by $\{\langle \ast b|,\langle \ast ba|,\langle\ast ba^2|\}$. The state space $A(-)$ is spanned by 
$\{|b\ast\rangle,|ab\ast\rangle,|a^2b\ast \rangle\}. $ Matrix of the bilinear form is shown in Figure~\ref{nondist_ex_001}, resulting in the following defining relations on the vectors: 
\begin{equation}
\label{eq_relations_ndiss}
\langle \ast b|+\langle \ast ba|=\langle \ast b|+\langle \ast ba^2|,  \hspace{1cm} 
\langle \ast ba|+\langle \ast ba^2| = \langle \ast ba|. 
\end{equation}
The state space $A(+)$ is a non-distributive semilattice (not a projective $\Bool$-module). Likewise $A(-)\cong A(+)^{\ast}$ is not projective. 

\begin{figure}
    \centering
\begin{tikzpicture}[scale=0.6]
\begin{scope}[shift={(0,0)}]

\draw[thick] (0,3) -- (5.25,3);
\draw[thick] (0,2) -- (5.25,2);
\draw[thick] (0,1) -- (5.25,1);
\draw[thick] (0,0) -- (5.25,0);

\draw[thick] (0,0) -- (0,3);
\draw[thick] (1.75,0) -- (1.75,3);
\draw[thick] (3.5,0) -- (3.5,3);
\draw[thick] (5.25,0) -- (5.25,3);

\node at (0.875,2.5) {$0$};
\node at (2.625,2.5) {$1$};
\node at (4.375,2.5) {$1$};

\node at (0.875,1.5) {$1$};
\node at (2.625,1.5) {$1$};
\node at (4.375,1.5) {$0$};

\node at (0.875,0.5) {$1$};
\node at (2.625,0.5) {$0$};
\node at (4.375,0.5) {$0$};

\node at (-0.85,2.5) {$\langle *b|$};
\node at (-1.00,1.5) {$\langle *ba|$};
\node at (-1.15,0.5) {$\langle *ba^2|$};

\node at (0.875,3.5) {$|b*\rangle$};
\node at (2.625,3.5) {$|ab*\rangle$};
\node at (4.375,3.5) {$|a^2b*\rangle$};

\end{scope}
\end{tikzpicture}
    \caption{ Pairing on the generating vectors in $A(+)$ and $A(-)$ for Example~\ref{example_nondiss}, implying relations \eqref{eq_relations_ndiss}.  }
    \label{nondist_ex_001}
\end{figure}

We cannot immediately extend $A(+)$ with the corresponding action of semi-infinite words on it to a TQFT, since $A(+)$ is not projective. Instead, we can choose an automaton for the language $L_{\Z}$, see Figure~\ref{nondist_ex_002} left. 

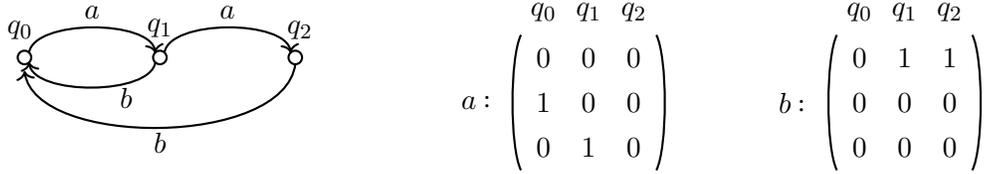
\begin{figure}
    \centering
\begin{tikzpicture}[scale=0.6]
\begin{scope}[shift={(0,0)}]

\draw[thick] (0.15,2) arc (0:360:1.5mm);
\draw[thick] (3.15,2) arc (0:360:1.5mm);
\draw[thick] (6.15,2) arc (0:360:1.5mm);

\node at (-0.1,2.60) {$q_0$};
\node at (3,2.65) {$q_1$};
\node at (6.1,2.60) {$q_2$};

\draw[thick,->] (0.10,2.10) .. controls (0.20,2.85) and (2.80,2.85) .. (2.90,2.10);
\node at (1.5,3.00) {$a$};

\begin{scope}[shift={(3,0)}]
\draw[thick,->] (0.10,2.10) .. controls (0.20,2.85) and (2.80,2.85) .. (2.90,2.10);
\node at (1.5,3.00) {$a$};
\end{scope}

\draw[thick,<-] (0.10,1.90) .. controls (0.20,1.15) and (2.80,1.15) .. (2.90,1.90);
\node at (2.25,1.10) {$b$};

\draw[thick,<-] (0,1.70) .. controls (0.25,0) and (5.75,0) .. (6,1.85);
\node at (3,0.10) {$b$};

\end{scope}

\begin{scope}[shift={(10,-1)}]


\node at (0,2) {$a:$};

\draw[thick] (1,0.5) .. controls (0.75,1) and (0.75,3) .. (1,3.5);
\draw[thick] (4,0.5) .. controls (4.25,1) and (4.25,3) .. (4,3.5);

\node at (1.5,4) {$q_0$};
\node at (2.5,4) {$q_1$};
\node at (3.5,4) {$q_2$};

\node at (1.5,3) {$0$};
\node at (1.5,2) {$1$};
\node at (1.5,1) {$0$};

\node at (2.5,3) {$0$};
\node at (2.5,2) {$0$};
\node at (2.5,1) {$1$};

\node at (3.5,3) {$0$};
\node at (3.5,2) {$0$};
\node at (3.5,1) {$0$};

\end{scope}

\begin{scope}[shift={(17,-1)}]

\node at (0,2) {$b:$};

\draw[thick] (1,0.5) .. controls (0.75,1) and (0.75,3) .. (1,3.5);
\draw[thick] (4,0.5) .. controls (4.25,1) and (4.25,3) .. (4,3.5);

\node at (1.5,4) {$q_0$};
\node at (2.5,4) {$q_1$};
\node at (3.5,4) {$q_2$};

\node at (1.5,3) {$0$};
\node at (1.5,2) {$0$};
\node at (1.5,1) {$0$};

\node at (2.5,3) {$1$};
\node at (2.5,2) {$0$};
\node at (2.5,1) {$0$};

\node at (3.5,3) {$1$};
\node at (3.5,2) {$0$};
\node at (3.5,1) {$0$};
\end{scope}

\end{tikzpicture}
    \caption{Left: an automaton with a  non-distributive language. Right: action of $a$ and $b$ in the basis $\{q_0,q_1,q_2\}$ of $\mcF(+)$.}
    \label{nondist_ex_002}
\end{figure}

The TQFT associated with this automaton has $\mcF(+)=\Bool q_0\oplus \Bool q_1\oplus\Bool q_2$, and the circular language $L_{\circ}$ of this TQFT consists of circular words that do not contain subwords $a^3$ or $b^2$. It can also be written as 
\[
L_{\circ}=\Sigma^{\ast}\setminus
\left\{
\Sigma^{\ast}a^3 \Sigma^{\ast} \cup \Sigma^{\ast}b^2\Sigma^{\ast}\cup b\Sigma^{\ast}b\cup a^2\Sigma^{\ast} a\cup a\Sigma^{\ast}a^2 
\right\}. 
\]
Taking a finite cover of this automaton, for example as in Figure~\ref{nondist_ex_003}, results in a different TQFT with the same bi-infinite language $L_{\Z}$ but a different circular language. For the TQFT  associated to the two-sheeted cover automaton in Figure~\ref{nondist_ex_003}, the circular language $L_{\circ}'$ is a subset of $L_{\circ}$ which consists of circular words with even number of appearances of $a^2$ between consecutive $b$'s (and any number of appearances of $a$ between consecutive $b$'s).

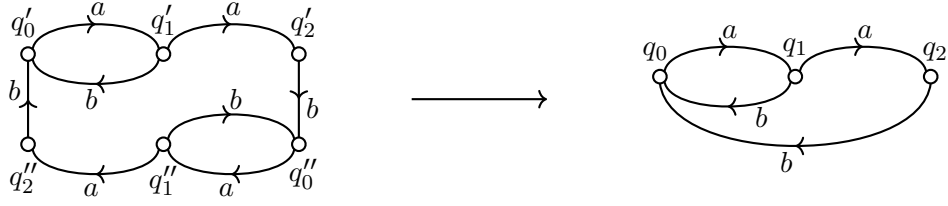
\begin{figure}
    \centering
\begin{tikzpicture}[scale=0.6,decoration={markings,mark=at position 0.5 with {\arrow{>}}}]
\begin{scope}[shift={(14,-0.5)}]

\draw[thick] (0.15,2) arc (0:360:1.5mm);
\draw[thick] (3.15,2) arc (0:360:1.5mm);
\draw[thick] (6.15,2) arc (0:360:1.5mm);

\node at (-0.1,2.55) {$q_0$};
\node at (3,2.65) {$q_1$};
\node at (6.1,2.55) {$q_2$};

\draw[thick,postaction={decorate}] (0.10,2.10) .. controls (0.20,2.85) and (2.80,2.85) .. (2.90,2.10);
\node at (1.55,3.00) {$a$};

\begin{scope}[shift={(3,0)}]
\draw[thick,postaction={decorate}] (0.10,2.10) .. controls (0.20,2.85) and (2.80,2.85) .. (2.90,2.10);
\node at (1.55,3.00) {$a$};
\end{scope}

\draw[thick,postaction={decorate}] (2.90,1.90) .. controls (2.80,1.15) and (0.20,1.15) .. (0.10,1.90);
\node at (2.25,1.10) {$b$};

\draw[thick,postaction={decorate}] (6,1.85) .. controls (5.75,0) and (0.25,0) .. (0,1.85);
\node at (2.8,0.10) {$b$};
\end{scope}

\begin{scope}[shift={(0,0)}]

\draw[thick] (0.15,2) arc (0:360:1.5mm);
\draw[thick] (3.15,2) arc (0:360:1.5mm);
\draw[thick] (6.15,2) arc (0:360:1.5mm);

\node at (-0.1,2.70) {$q_0'$};
\node at (3,2.75) {$q_1'$};
\node at (6.1,2.70) {$q_2'$};

\draw[thick,postaction={decorate}] (0.10,2.10) .. controls (0.20,2.85) and (2.80,2.85) .. (2.90,2.10);
\node at (1.55,3.00) {$a$};

\begin{scope}[shift={(3,0)}]
\draw[thick,postaction={decorate}] (0.10,2.10) .. controls (0.20,2.85) and (2.80,2.85) .. (2.90,2.10);
\node at (1.55,3.00) {$a$};
\end{scope}

\draw[thick,postaction={decorate}] (2.90,1.90) .. controls (2.80,1.15) and (0.20,1.15) .. (0.10,1.90);
\node at (1.4,1.00) {$b$};

\draw[thick,postaction={decorate}] (6,1.85) -- (6,0.15);
\draw[thick,postaction={decorate}] (0,0.15) -- (0,1.85);

\node at (-0.30,1.2) {$b$};
\node at ( 6.30,0.8) {$b$};

\begin{scope}[shift={(0,-2)}]
\draw[thick] (0.15,2) arc (0:360:1.5mm);
\draw[thick] (3.15,2) arc (0:360:1.5mm);
\draw[thick] (6.15,2) arc (0:360:1.5mm);

\draw[thick,postaction={decorate}] (2.90,1.90) .. controls (2.80,1.15) and (0.20,1.15) .. (0.10,1.90);
\node at (1.4,1.00) {$a$};

\draw[thick,postaction={decorate}] (5.90,1.90) .. controls (5.80,1.15) and (3.20,1.15) .. (3.10,1.90);
\node at (4.4,1.00) {$a$};

\begin{scope}[shift={(3,0)}]
\draw[thick,postaction={decorate}] (0.10,2.10) .. controls (0.20,2.85) and (2.80,2.85) .. (2.90,2.10);
\node at (1.6,3.00) {$b$};
\end{scope}

\node at (-0.1,1.25) {$q_2''$};
\node at (3,1.20) {$q_1''$};
\node at (6.1,1.35) {$q_0''$};
\end{scope}

\draw[thick,->] (8.5,1) -- (11.5,1);

\end{scope}

\end{tikzpicture}
    \caption{A two-sheeted cover of the automaton in Figure~\ref{nondist_ex_002}, with $q_i',q_i''$ mapped to $q_i$, $i=0,1,2$.}
    \label{nondist_ex_003}
\end{figure}

\end{example}

%
%

\section{Defects accumulating on one side and \texorpdfstring{$\omega$}{w}-automata}\label{sec_one_side}


\subsection{\texorpdfstring{$\omega$}{omega}-automata and TQFTs}\label{subsec_omega}

 \quad 
 
 \noindent 
{\it B\"uchi automata and TQFTs.}
We now consider a nondeterministic B\"uchi automaton $(Q)$ $=$ $(Q,\delta,Q_{\init},Q_{\ac})$, where $Q$ is a finite set (of states of $(Q)$), $\delta: Q\times \Sigma\lra \mathcal{P}(Q)$ is a transition function, $Q_{\init}\subset Q$ is a subset of initial states and $Q_{\ac}\subset Q$ is an acceptance condition subset. B\"uchi automaton $(Q)$ accepts a right-infinite word $\omega=a_1a_2\cdots $, $\omega\in \Sigma^{\h}$ if there exists an infinite path $\omega$ in the graph of $(Q)$ which starts in a state in $Q_{\init}$ and goes infinitely many times through at least one of the states in $Q_{\ac}$. 

To $(Q)$ we associate a one-dimensional TQFT by considering a version of the decorated one-dimensional cobordism category as follows. 

Define $\Cob^{\h}_{\Sigma}$ to be the category of one-dimensional oriented cobordisms with $\Sigma$-labelled defects and inner endpoints and additionally require that at each inner endpoint oriented \emph{out} of the cobordism there is an infinite countable sequence of accumulating defects, see Figure~\ref{fig_sofic-0009}. 

\vspace{0.1in} 

\begin{figure}
    \centering
\begin{tikzpicture}[scale=0.6]
\begin{scope}[shift={(0,0)},decoration={markings,mark=at position 0.70 with {\arrow{>}}}]
\draw[thick,dashed] (0,6) -- (13,6);
\draw[thick,dashed] (0,0) -- (13,0);

\node at (2,6.5) {$-$};
\node at (4,6.5) {$+$};
\node at (6,6.5) {$+$};

\node at (2,-0.5) {$+$};
\node at (4,-0.5) {$+$};
\node at (6,-0.5) {$-$};
\node at (8,-0.5) {$+$};

\draw[thick] (0.5,3) .. controls (0.5,3.5) and (2,4) .. (2,5);
\draw[thick,<-] (2,5) -- (2,6);

\draw[thick,fill] (1.0,3.50) arc (0:360:1.5mm);
\draw[thick,fill] (1.85,4.25) arc (0:360:1.5mm);
\draw[thick,fill] (2.10,4.75) arc (0:360:1.5mm);

\node at (2.30,3.85) {$\omega_1$};
\node at (1.50,3.40) {\rotatebox[origin=c]{36}{$\cdots$}};

\draw[thick,postaction={decorate}] (2,0) .. controls (1.5,3) and (4.5,4) .. (4,6);

\draw[thick,fill] (2.35,2) arc (0:360:1.5mm);
\node at (2.9,2) {$\omega_3$};

\draw[thick,postaction={decorate}] (4,0) .. controls (4.2,2) and (5.8,2) .. (6,0);

\draw[thick,fill] (5.15,1.5) arc (0:360:1.5mm);
\node at (5,2) {$\omega_6$};

\draw[thick,->] (8,0) .. controls (8.25,1) and (6.25,2) .. (6.5,3.5);

\draw[thick,fill] (6.68,2.75) arc (0:360:1.5mm);
\draw[thick,fill] (7.00,2.10) arc (0:360:1.5mm);
\draw[thick,fill] (7.35,1.60) arc (0:360:1.5mm);

\draw[thick,fill] (7.75,1.10) arc (0:360:1.5mm);
 
\node at (7.1,2.7) {\rotatebox[origin=c]{-65}{$\cdots$}};
\node at (7.82,1.9) {$\omega_7$};

\draw[thick,postaction={decorate}] (4.5,3) .. controls (4.5,4) and (6,5) .. (6,6);
\draw[thick,fill] (5.05,4.0) arc (0:360:1.5mm);
\node at (5.5,3.75) {$\omega_2$};

\draw[thick,postaction={decorate}] (8,4.0) .. controls (9,5) and (11,4.75) .. (12,5.25);

\draw[thick,fill] (9.5,4.65) arc (0:360:1.5mm);
\draw[thick,fill] (10.15,4.81) arc (0:360:1.5mm);

\draw[thick,fill] (11.5,5.05) arc (0:360:1.5mm);

\node at (10.18,4.23) {$\omega_4$};
\node at (11.15,4.4) {\rotatebox[origin=c]{10}{$\cdots$}};

\begin{scope}[shift={(1.75,-0.5)}]
\draw[thick,<-] (9,2) arc (-90:270:1);
\draw[thick,fill] (10.15,3) arc (0:360:1.5mm);
\node at (10.75,3) {$\omega_5$};
\end{scope}

\end{scope}

\end{tikzpicture}
    \caption{Words $\omega_1,\omega_4,\omega_7\in \Sigma^{\h}$ are right-infinite, with defects accumulating towards out-oriented (sink) inner endpoints. Words $\omega_2,\omega_3,\omega_5,\omega_6\in \Sigma^*$ are finite words, each depicted by a single dot.}
    \label{fig_sofic-0009}
\end{figure}
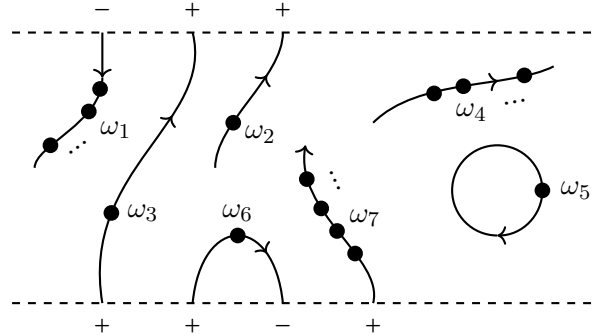

\vspace{0.1in} 

At an inner endpoint oriented \emph{into} the cobordism there are only finitely many defects, so that defects accumulate only towards \emph{out} inner endpoints. Category $\Cob^{\h}_{\Sigma}$ is a symmetric tensor category.  

\begin{prop}\label{prop_buchi}
    A B\"uchi automaton $(Q)$ determines a Boolean TQFT 
    \begin{equation}
        \mcF_{(Q)} \ : \ \Cob^{\h}_{\Sigma} \lra \Bool\mathrm{-fmod}
    \end{equation}
    valued in the category of free $\Bool$-semimodules. 
\end{prop}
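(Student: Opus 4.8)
The plan is to define $\mcF_{(Q)}$ on objects and on the standard generating morphisms exactly as in the finite-state correspondence of Section~\ref{subsec_qreview}, and then to supply the one genuinely new ingredient: the evaluation of the accumulating defects at an \emph{out}-oriented (sink) inner endpoint. First I would set $\mcF_{(Q)}(+) = \Bool Q$ and $\mcF_{(Q)}(-) = \Bool Q^{\ast}$, assign to a defect labelled $a$ on an up-oriented strand the transition endomorphism $\delta_a$ of $\Bool Q$ (and its dual $\delta_a^{\ast}$ on a down-oriented strand), assign the coevaluation and evaluation of \eqref{align_1} to the cup and cap, and assign $Q_{\init} = \sum_{q\in Q_{\init}} q \in \Bool Q$ to an \emph{in}-oriented (source) half-interval, which in $\Cob^{\h}_{\Sigma}$ carries only finitely many defects. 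All of this is a verbatim copy of the established finite-defect construction.

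The new piece is an out-oriented (sink) inner endpoint carrying the right-infinite accumulating word $\omega = a_1 a_2 \cdots \in \Sigmar$; such an endpoint generalizes the accepting-states covector of the finite case. To it I would assign the covector $\mcF_{(Q)}(\omega) \in \Bool Q^{\ast}$, that is, the map $\Bool Q \to \Bool$ whose value on a basis state $q \in Q$ equals $1$ if and only if $(Q)$ admits an infinite path labelled $\omega$ that starts at $q$ and passes through $Q_{\ac}$ infinitely often (the B\"uchi condition), extended $\Bool$-linearly. This mirrors the bi-infinite assignment constructed at the end of Section~\ref{section:sofic-systems}, with the plain ``an infinite path exists'' condition there replaced by B\"uchi acceptance.

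The main obstacle, and the only step that is not an immediate transcription of the finite case, is \textbf{well-definedness of this covector under the choice of cut-off position}: the accumulating sequence of dots must be truncated between two consecutive defects, and the resulting evaluation must not depend on where. Concretely I must establish the shift recursion $\mcF_{(Q)}(a\omega') = \delta_a^{\ast}\bigl(\mcF_{(Q)}(\omega')\bigr)$, equivalently $\mcF_{(Q)}(a\omega')(q) = \bigvee_{q' \in \delta_a(q)} \mcF_{(Q)}(\omega')(q')$. This holds because ``visiting $Q_{\ac}$ infinitely often'' is a tail property: a B\"uchi-accepting run from $q$ on $a\omega'$ is exactly a transition $q \xrightarrow{a} q'$ with $q' \in \delta_a(q)$ followed by a B\"uchi-accepting run from $q'$ on $\omega'$, since prepending a single letter and state alters neither the set of states visited infinitely often nor the acceptance verdict. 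The union over the nondeterministic branches is precisely the $\Bool$-linear structure, so the recursion is exact over $\Bool$. This one identity simultaneously makes the covector independent of the cut-off, compatible with sliding a defect $\delta_a$ past the endpoint, and compatible with decomposing a floating interval through an intermediate $+$ object.

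Finally I would assemble these assignments into a symmetric monoidal functor. Evaluating a floating interval labelled $\omega \in \Sigmar$ by cutting it through a $+$ point gives $\mcF_{(Q)}(\omega)(Q_{\init}) = \bigvee_{q \in Q_{\init}} \mcF_{(Q)}(\omega)(q)$, which is $1$ precisely when $(Q)$ B\"uchi-accepts $\omega$, as desired; a floating circle with finite word $\omega$ still evaluates to $\tr_{\Bool Q}(\omega)$ exactly as in the finite case. The remaining axioms --- isotopy invariance (the zig-zag relations coming from $\mathsf{coev}$ and $\mathsf{ev}$), defect-sliding, composition, and compatibility with the tensor product and the symmetry --- involve only finitely many defects away from the endpoints together with the endpoint consistency just established, so they reduce to the already-verified finite-defect functoriality of Section~\ref{subsec_qreview} and~\cite{GIKKL23}. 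Since every state space $\mcF_{(Q)}(\varepsilon)$ is a tensor power of the free modules $\Bool Q$ and $\Bool Q^{\ast}$, the functor indeed lands in $\Bfmod$.
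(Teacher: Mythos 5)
Your proposal follows essentially the same route as the paper's proof: the same assignments on objects, defects, cups, caps, and half-intervals, with the covector $\gamma_\omega$ at an out-oriented inner endpoint defined by the B\"uchi acceptance condition from each state. The one point where you go beyond the paper is welcome rather than divergent --- the paper simply asserts that the evaluation is independent of where the accumulating word is cut, whereas you make this explicit via the shift recursion $\mcF_{(Q)}(a\omega') = \delta_a^{\ast}\bigl(\mcF_{(Q)}(\omega')\bigr)$ and the observation that B\"uchi acceptance is a tail property.
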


\begin{proof}
    The proof consists of the construction of that TQFT, which we denote $\mcF$ here instead of $\mcF_{(Q)}$. Most of the construction matches the one in Section~\ref{subsec_qreview} where to an automaton there is associated a functor \eqref{eq_TQFT_B} from $\Cob_{\Sigma}$ to $\Bool\mathsf{-fmod}$. 
    
    Let 
    $\mcF(+)=\Bool Q$ and $\mcF(-)=\Bool Q^{\ast}$, with the cup and cap maps given as in the automata case, 
    \[
    \mcF(\cup) = \sum_{q\in Q} q\otimes q^{\ast}, \ \ 
    \mcF(\cap) (q_1\otimes q_2^{\ast}) = \delta(q_1,q_2). 
    \]
    Note that $\delta(q_1,q_2)$ stands for the $\Bool$-valued Dirac delta-function, while $\delta_a: Q\lra \mathcal{P}(Q)$, $a\in \Sigma$ denotes the transition function for $(Q)$. 

     To an \emph{in} oriented interval, viewed as a cobordism from the empty $0$-manifold $\emptyset_0$ to $+$, assign homomorphism $\Bool \lra \Bool Q = \mcF(+)$ given by $1\longmapsto Q_{\init}$. To a dot labelled $a\in \Sigma$ on an upward strand assign the linear map $m_a: \Bool Q \lra \Bool Q$ induced by the transition function $\delta_a$. 

     To an infinite sequence of dots $\omega=a_1a_2\cdots $ on a downward strand assign a linear map $\gamma_{\omega}: \Bool Q\lra \Bool$, where $\gamma_{\omega}(q)=1$ if and only if there is an infinite path $\omega$ that starts at $q$ and goes through a state in $Q_{\ac}$ infinitely many times. Otherwise $\gamma_{\omega}(q)=0$. The linear map $\gamma_{\omega}\in \Bool Q^{\ast}$, and functor $\mcF$ takes the sequence $\omega$ on a half-interval pointing down to $f_{\omega}$, see Figure~\ref{fig_sofic-0010}. 

\vspace{0.1in} 

\begin{figure}
    \centering
\begin{tikzpicture}[scale=0.6]
\begin{scope}[shift={(0,0)}]
\draw[thick,dashed] (0,4) -- (2,4);
\draw[thick,dashed] (0,0) -- (2,0);
\node at (1,-0.5) {$+$};

\draw[thick,->] (1,0) -- (1,3);

\node at (0.25,2.25) {$\vdots$};
\node at (0.25,1.5) {$a_3$};
\node at (0.25,1.0) {$a_2$};
\node at (0.25,0.5) {$a_1$};

\node at (1.75,1.25) {$\omega$};

\draw[thick,fill] (1.15,2.25) arc (0:360:1.5mm);

\draw[thick,fill] (1.15,1.5) arc (0:360:1.5mm);

\draw[thick,fill] (1.15,1.0) arc (0:360:1.5mm);

\draw[thick,fill] (1.15,0.5) arc (0:360:1.5mm);

\begin{scope}[shift={(1,0)}]
\node at (2.50,4) {$\Bool$};
\node at (2.75,0) {$\Bool Q$};

\draw[thick,->] (2.50,0.5) -- (2.50,3.5);
\node at (3.00,2) {$\gamma_{\omega}$};
\end{scope}

\end{scope}

\begin{scope}[shift={(10,0)}]

\draw[thick,dashed] (0,4) -- (2,4);
\draw[thick,dashed] (0,0) -- (2,0);
\node at (1,4.5) {$+$};

\draw[thick,->] (1,1) -- (1,4);

\node at (0.25,3.5) {$a_n$};
\node at (0.25,2.75) {$\vdots$};
\node at (0.25,2.0) {$a_2$};
\node at (0.25,1.5) {$a_1$};


\draw[thick,fill] (1.15,3.5) arc (0:360:1.5mm);

\draw[thick,fill] (1.15,2.75) arc (0:360:1.5mm);

\draw[thick,fill] (1.15,2.0) arc (0:360:1.5mm);

\draw[thick,fill] (1.15,1.5) arc (0:360:1.5mm);

\begin{scope}[shift={(1,0)}]
\node at (2.75,4) {$\Bool Q$};
\node at (2.50,0) {$\Bool$};

\draw[thick,->] (2.50,0.5) -- (2.50,3.5);
\end{scope}

\node at (5,2) {$=$};
\end{scope}

\begin{scope}[shift={(16,0)}]

\draw[thick,dashed] (0,4) -- (2,4);
\draw[thick,dashed] (0,0) -- (2,0);
\node at (1,4.5) {$+$};

\draw[thick,->] (1,1) -- (1,4);

\node at (1.75,2.50) {$\omega$};

\draw[thick,fill] (1.15,2.50) arc (0:360:1.5mm);

\begin{scope}[shift={(1,0)}]
\node at (2.75,4) {$\Bool Q$};
\node at (2.50,0) {$\Bool$};

\draw[thick,->] (2.50,0.5) -- (2.50,3.5);
\end{scope}

\end{scope}

\end{tikzpicture}
    \caption{Left: an infinite word $\omega=a_1a_2\dots $ defines an element of $\mcF(-)\cong \Bool Q^{\ast}$ and a $\Bool$-linear functional $\gamma_\omega:\Bool Q\lra \Bool$.  Right: a finite word $\omega=a_1a_2\cdots a_n$  defines an element $\langle \omega |$ of $\mcF(+)=\Bool Q$.}
    \label{fig_sofic-0010}
\end{figure}

\vspace{0.1in} 

Note that the only way to produce an infinite sequence of defects on a connected component of a cobordism in $\Cob^{\h}_{\Sigma}$ is at an outward-oriented inner endpoint. In particular, a circle  carries only a finite sequence of defects (possibly empty). 

A floating circle with a finite sequence $\omega=a_1\cdots a_n$ evaluates to the trace of the corresponding endomorphism of $\Bool Q$, thus to $1$ if and only if there is a closed path in $(Q)$ for the sequence $\omega$. 

A floating interval necessarily carries a right-infinite sequence $\omega=a_1a_2\cdots\in \Sigma^{\h}$. From our construction, it is clear that it has a well-defined evaluation, independent of how the word (the floating interval) is chopped into several intervals with finitely-many defects and one interval with infinitely-many defects, when presenting the floating interval as the composition of elementary morphisms in $\Cob^{\h}_{\Sigma}$. Namely floating interval $\omega$  evaluates to $1$ if and only if the word $\omega$ is accepted by the B\"uchi automaton $Q$. 
\end{proof}

\begin{remark}  Reversing orientation of cobordisms results in the category $\Cob^{\t}_{\Sigma}$ where defects accumulate towards \emph{in-oriented} inner endpoints, not \emph{out-oriented}  inner endpoints. Monoidal categories $\Cob^{\t}_{\Sigma}$ and $\Cob^{\h}_{\Sigma}$ are isomorphic, via the involutive isomorphism that reverses signs of sequences and orientations of one-manifolds. A B\"uchi automaton $(Q)$ defines a TQFT 
$\mcF_{(Q)}: \Cob^{\t}_{\Sigma}\lra \Bool\mathrm{-fmod}$ as well, by passing to dual semimodules and left-infinite instead of right-infinite words. 
\end{remark}

\noindent 
{\it Types of $\omega$-automata.}
A \emph{run} $\rho$ of an automaton on semi-infinite words is a sequence of states $q_0,q_1,\ldots$ such that $q_0\in Q_{\init}$ and $q_{i+1}\in \delta_{a_i}(q_i)\subset Q$. Denote by $\mInf(\rho)\subset Q$ the set of states which appear in the run $\rho$ infinitely many times. 

Non-deterministic B\"uchi automata constitute one flavour of $\omega$-automata, that is, automata on semi-infinite words. We recall other types of $\omega$-automata, which use the following acceptance conditions:  
\begin{itemize}
    \item  {\it Rabin automaton:} For some set $\Omega$ of pairs $(B_i,G_i)$ of sets of states the automaton accepts a run $\rho$ if and only if there exists a pair $(B_i,G_i)$ in $\Omega$ such that $B_i \cap \mInf(\rho)$ is empty and $G_i \cap \mInf(\rho)$ is not empty.
    \item {\it Streett automaton:}
    For some set $\Omega$ of pairs $(B_i,G_i)$ of sets of states the automaton accepts a run $\rho$ if and only if for all pairs $(B_i,G_i)$ in $\Omega$ 
    such that $B_i \cap \mInf(\rho)$ is empty or $G_i \cap \mInf(\rho)$ is not empty.
    \item {\it A Muller automaton:} For a subset $F$ of $\mathcal{P}(Q)$ the automaton accepts exactly those runs for which $\mInf(\rho)$ is an element of $F$. 
\end{itemize}
B\"uchi, Rabin and Streett automata are special cases of Muller automata. It is an early result from the theory of $\omega$-automata that nondeterministic B\"uchi, Rabin, Street and Muller automata all recognize the same class of languages on semi-infinite words, known as the class of \emph{regular $\omega$-languages} (see~\cite{Wilke21,BC_Toolbox17}, for instance). Regular $\omega$-languages are exactly those recognized by \emph{$\omega$-regular expressions}, which are expressions of the form $r_0\cdot s_0^{\omega}+\ldots +r_{n-1}\cdot s_{n-1}^{\omega}$.  
Here $r_i,s_i$, $0\le i\le n-1$, are regular expressions and $s^{\omega}$, for a regular expression $s$, is the $\omega$-language consisting of infinite words $w_1w_2\cdots$ with each $w_j$, $j\ge 1$, a word in the language of the regular expression $s$. 

\begin{prop}
    A Muller automaton $(Q)$ determines a Boolean TQFT 
    \begin{equation}
        \mcF_{(Q)} \ : \ \Cob^{\h}_{\Sigma} \lra \Bool\mathrm{-fmod}
    \end{equation}
    valued in the category of free $\Bool$-semimodules. 
\end{prop}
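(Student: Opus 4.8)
The plan is to repeat the construction in the proof of Proposition~\ref{prop_buchi} almost verbatim, changing only the functional attached to an accumulation of defects at an out-oriented inner endpoint so that it implements the Muller acceptance condition $\mInf(\rho)\in F$ in place of the B\"uchi one. Concretely, I would set $\mcF(+)=\Bool Q$ and $\mcF(-)=\Bool Q^{\ast}$, and assign to the cup, the cap, the in-oriented half-interval ($1\longmapsto Q_{\init}$), the defect dots ($a\longmapsto m_a$), and the floating circle exactly the same data as in Proposition~\ref{prop_buchi}. None of these assignments refers to the acceptance condition, so they carry over unchanged; in particular a floating circle labelled by a finite word $\omega$ still evaluates to $\tr_{\Bool Q}(\omega)$, which is $1$ iff $(Q)$ has a closed path spelling $\omega$.

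The only new ingredient is the $\Bool$-linear functional $\gamma_\omega\in\Bool Q^{\ast}$ attached to a right-infinite accumulation $\omega=a_1a_2\cdots\in\Sigmar$ at an out-oriented inner endpoint. I would define $\gamma_\omega(q)=1$ iff there is an infinite run $\rho$ of $(Q)$ on $\omega$ starting at $q$ with $\mInf(\rho)\in F$, and $\gamma_\omega(q)=0$ otherwise, then extend $\Bool$-linearly.

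The key step, and essentially the only point that must be verified, is that this assignment is consistent with the decomposition of a floating interval into elementary morphisms, i.e. that the evaluation does not depend on how the semi-infinite word is chopped into a piece carrying finitely many defects followed by a piece carrying the accumulation. This reduces to the intertwining identity $\gamma_{a\omega}=\gamma_\omega\circ m_a$ in $\Bool Q^{\ast}$ for each $a\in\Sigma$. Indeed $\gamma_{a\omega}(q)=1$ iff some run on $a\omega$ from $q$ has $\mInf\in F$; since prepending a single transition leaves the set $\mInf(\rho)$ of infinitely-visited states unchanged, such a run exists iff there is $q'\in\delta_a(q)$ admitting a run on $\omega$ from $q'$ with $\mInf\in F$, which is exactly $\bigvee_{q'\in\delta_a(q)}\gamma_\omega(q')=(\gamma_\omega\circ m_a)(q)$. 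I expect this prefix-invariance/intertwining check to be the main substantive point: the B\"uchi construction was well-defined for precisely this reason, and the Muller condition, depending only on $\mInf(\rho)$, is likewise a tail property of the run, so the same argument applies.

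With the intertwining relation in hand, $\mcF_{(Q)}$ is a well-defined symmetric monoidal functor valued in free $\Bool$-semimodules, and iterating $\gamma_{\omega}\circ m_{b_k}\circ\cdots\circ m_{b_1}$ applied to $Q_{\init}$ shows that a floating interval labelled $\omega\in\Sigmar$ evaluates to $1$ iff $\omega$ is accepted by the Muller automaton $(Q)$, as desired. One could alternatively invoke the equivalence of nondeterministic Muller and B\"uchi automata recalled above to reduce directly to Proposition~\ref{prop_buchi}, but I would prefer the direct construction, since it produces the TQFT from the given automaton $(Q)$ itself and records its own trace and circular data.
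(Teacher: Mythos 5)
Your proposal is correct and is essentially the paper's intended argument: the paper's proof of this proposition is literally the single sentence that the constructions of Proposition~\ref{prop_buchi} extend to Muller automata in a straightforward way, and your write-up spells out exactly that extension (same $\mcF(\pm)$, cup/cap, initial-state and dot assignments, with only $\gamma_\omega$ redefined via the condition $\mInf(\rho)\in F$). Your verification of the intertwining identity $\gamma_{a\omega}=\gamma_\omega\circ m_a$, using that the Muller condition is a tail property of the run, is precisely the well-definedness check the paper leaves implicit.
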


Proof and constructions of Proposition~\ref{prop_buchi} extend to Muller automata in a straightforward way. 
$\square$

\vspace{0.07in} 

In the automata -- Boolean TQFT correspondence to the set $Q$ of states of an automaton there is associated the free Boolean module $\Bool Q$, which is a building block for the associated TQFT and the state space $\mcF(+)$ of a positively oriented 0-manifold. The notion of a quasi-automaton is briefly discussed in~\cite[Section 4.3]{GIKKL23}, generalizing the correspondence from free to projective $\Bool$-modules. 

Analogously, define a B\"uchi quasi-automaton $(P)=(P,\{m_a\}_{a\in \Sigma},q_{\init},b)$ to be a finitely-generated projective $\Bool$-module $P$, with an action of $\Sigma^{\ast}$ on $P$ given by $\Bool$-linear endomorphisms $m_a:P\lra P$, initial vector $q_{\init}\in P$ and a covector $b:P\lra \Bool$. To $(P)$ assign a language $L\subset \Sigma^{\h}$ as follows. Any finite word $\omega=a_1\cdots a_n\in\Sigma^{\ast}$ defines a vector 
\[q_{\init}\omega \in P. 
\]
A right-infinite word $\omega=a_1a_2\cdots$ is defined to be in $L$ if and only if $b(q_{\init}a_1\cdots a_n)=1$ for infinitely many $n>0$. Functional $b$ replaces the accepting subset $Q_{\ac}$ of $Q$ in the definition of a B\"uchi automaton in which the path must land infinitely many times.

\begin{prop}
    An $\omega$-language $L$ is accepted by a B\"uchi quasi-automaton if and only if it is accepted by a deterministic B\"uchi automaton.  
\end{prop}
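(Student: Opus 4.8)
The plan is to exploit the observation that, despite its linear-algebraic packaging, a B\"uchi quasi-automaton runs \emph{deterministically}. After reading a finite prefix $a_1\cdots a_n$ the machine occupies the single state $q_{\init}a_1\cdots a_n\in P$, and this state together with the next input letter determines the successor $m_{a_{n+1}}(q_{\init}a_1\cdots a_n)$. Since a finitely-generated projective $\Bool$-module is a retract of some finite free module $\Bool^n$, its underlying set embeds (via $\iota$) into the finite set $\Bool^n$ and is therefore finite. This is the crux: the state set is the finite set $P$, whose elements should be treated as atomic states rather than superpositions, because the run, seeded by the single vector $q_{\init}$ and propagated by the functions $m_a$, never produces a genuine join of several simultaneously active states.

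For the forward direction (quasi-automaton $\Rightarrow$ deterministic B\"uchi), I would take the deterministic B\"uchi automaton $D=(P,\delta,q_{\init},F)$ whose state set is the finite set $P$, with transition function $\delta(p,a)=m_a(p)$, initial state $q_{\init}$, and accepting set $F=b^{-1}(1)=\{p\in P: b(p)=1\}$. The unique run of $D$ on $\omega=a_1a_2\cdots\in\Sigma^{\h}$ is exactly the sequence $p_n=q_{\init}a_1\cdots a_n$, so $D$ accepts $\omega$ (it visits $F$ infinitely often) if and only if $b(q_{\init}a_1\cdots a_n)=1$ for infinitely many $n$, which is precisely the acceptance condition defining the language of $(P)$. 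Hence $L(D)$ equals the given language.

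For the converse (deterministic B\"uchi $\Rightarrow$ quasi-automaton), I would start from a deterministic B\"uchi automaton $D=(Q,\delta,q_0,F)$ and build a quasi-automaton on the \emph{free} (hence projective) module $P=\Bool Q$. Define each $m_a$ as the $\Bool$-linear extension of the map $q\mapsto\delta(q,a)$ on basis vectors, set $q_{\init}=q_0$, and let $b$ be the linear extension of the indicator function of $F$. Because $D$ is deterministic, $\delta(q,a)$ is a single state, so $m_a$ sends each basis vector to a basis vector; consequently the configuration $q_0a_1\cdots a_n$ remains a single basis vector, namely the unique state of $D$ reached on $a_1\cdots a_n$. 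Then $b(q_0a_1\cdots a_n)=1$ exactly when that state lies in $F$, and the two acceptance conditions again coincide.

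There is no real computational obstacle here; the one point that must be argued with care is that the apparent generality of allowing a projective module and arbitrary $\Bool$-linear maps does \emph{not} smuggle in nondeterminism. I would emphasize that this is guaranteed by the single-vector seeding and functional (rather than relational) propagation, so that at every finite stage the configuration is one element of $P$, in contrast to the set of simultaneously active states arising in the nondeterministic, free-module, subset-construction picture of Section~\ref{subsec_qreview}. The conceptual payoff worth recording is the resulting negative corollary: since deterministic B\"uchi automata are known to recognize only a proper subclass of the regular $\omega$-languages, B\"uchi quasi-automata are strictly weaker than the nondeterministic B\"uchi automata of Proposition~\ref{prop_buchi}, so enlarging the module from free to projective buys no additional expressive power in this setting.
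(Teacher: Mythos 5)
Your proof is correct and follows essentially the same route as the paper: both directions use exactly the constructions the paper gives (the deterministic B\"uchi automaton on the finite state set $P$ with accepting set $b^{-1}(1)$, and conversely the quasi-automaton on the free module $\Bool Q$ with $b$ the indicator of $F$), with your version simply spelling out the finiteness of $P$ and the coincidence of the two acceptance conditions in more detail. The closing observation that quasi-automata are therefore strictly weaker than nondeterministic B\"uchi automata is consistent with the remark the paper makes immediately after the proposition.
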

\begin{proof} 
B\"uchi quasi-automaton $(P)$ gives rise to a deterministic B\"uchi automaton $D_{(P)}$ with elements of $\Bool$-module $P$ as states and deterministic transition function. The set of accepting states consists of elements $x\in P$ with $b(x)=1$. Deterministic B\"uchi automaton $D_{(P)}$ defines the same $\omega$-language as $(P)$. Vice versa, a deterministic B\"uchi automaton $(Q)$ gives rise to a B\"uchi quasi-automaton with $\Bool$-module $P=\Bool Q$ the free module on the states of $Q$ and function $b:P\lra \Bool$ given by $b(q)=1$ if and only if $q$ is in the accepting set for $(Q)$. The B\"uchi quasi-automaton $(P)$ defines the same language as $(Q)$. 
\end{proof} 

Note that the $\omega$-language $L\subset \{a,b\}^{\omega}$ which consists of words with finitely many $a$'s is recognized by a non-deterministic B\"uchi automaton but cannot be recognized by a deterministic B\"uchi automaton, see~\cite[Example 6.2]{PePi04} and \cite[Lecture 7]{Kumar_notes} for instance. 

\begin{prop}\label{prop_buchi_quasi}
 A B\"uchi quasi-automaton $(P)$ determines a Boolean TQFT 
    \begin{equation}
        \mcF_{(P)} \ : \ \Cob^{\h}_{\Sigma} \lra \Bool\mathrm{-pmod}
    \end{equation}
    valued in the category of projective $\Bool$-semimodules.  
\end{prop}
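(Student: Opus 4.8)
The plan is to build $\mcF_{(P)}$ by following the construction of Proposition~\ref{prop_buchi} line for line, replacing the free module $\Bool Q$ by the projective module $P$, and to isolate the two places where projectivity is genuinely needed. First I would set $\mcF_{(P)}(+) = P$ and $\mcF_{(P)}(-) = P^{\ast}$, extending to sign sequences by tensor products; since $P$ is a finitely generated projective $\Bool$-semimodule, a retraction $\Bool^n \xrightarrow{p} P \xrightarrow{\iota} \Bool^n$ with $p\circ\iota=\id_P$ dualizes to exhibit $P^{\ast}$ as a retract of $(\Bool^n)^{\ast}\cong\Bool^n$, and tensor products of projectives are projective, so the functor stays valued in $\Bool\mathrm{-pmod}$. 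To the cup and cap cobordisms I assign the coevaluation and evaluation maps $\coev : \Bool \lra P \otimes P^{\ast}$ and $\ev : P^{\ast} \otimes P \lra \Bool$ furnished by this retraction; these exist precisely because $P$ is projective (this is the sole obstruction to carrying out the free-module argument, compare the decomposition-of-identity discussion and~\cite[Section 3.2]{IK-top-automata}) and satisfy the zigzag identities, so all planar isotopy relations hold automatically.

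On the defect generators I would send an in-oriented half-interval to the map $\Bool \lra P$, $1 \mapsto q_{\init}$, and a dot labelled $a\in\Sigma$ on an upward strand to $m_a : P \lra P$, exactly as in the automaton case. The new ingredient is the out-oriented half-interval carrying a right-infinite word $\omega = a_1 a_2 \cdots \in \Sigmar$: to it I assign the functional $\gamma_{\omega} : P \lra \Bool$ defined by the B\"uchi condition
\[
\gamma_{\omega}(x) = 1 \ \Longleftrightarrow \ b(x\, a_1 \cdots a_n) = 1 \ \text{ for infinitely many } n \geq 1,
\]
where $x\, a_1 \cdots a_n := m_{a_n}\circ\cdots\circ m_{a_1}(x)$. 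The crux is that $\gamma_{\omega}$ is $\Bool$-linear: since $b$ and each $m_a$ are $\Bool$-linear one has $b((x+y)\,a_1\cdots a_n) = b(x\, a_1 \cdots a_n) \vee b(y\, a_1 \cdots a_n)$, and a union of two subsets of $\N$ is infinite if and only if at least one of them is, whence $\gamma_{\omega}(x+y) = \gamma_{\omega}(x) \vee \gamma_{\omega}(y)$ and $\gamma_{\omega}(0)=0$.

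The main point left to verify is well-definedness of the evaluation under decomposition of a floating interval into elementary morphisms. If $\omega = u\omega'$ with $u = a_1 \cdots a_k \in \Sigma^{\ast}$ finite and $\omega' \in \Sigmar$, one needs $\gamma_{\omega}(x) = \gamma_{\omega'}(x\,u)$; this holds because truncating finitely many initial terms of the sequence $\bigl(b(x\, a_1 \cdots a_n)\bigr)_{n \geq 1}$ does not change whether it equals $1$ infinitely often. I expect this tail-invariance argument, together with the linearity lemma above, to be the only nontrivial step; everything else -- functoriality, symmetry, monoidality, and the evaluation of a floating circle $a_1\cdots a_n$ as the categorical trace $\tr_P(m_{a_1}\cdots m_{a_n})$, which is again well-defined because the dualizable (hence traceable) object $P$ is projective -- transfers word-for-word from Proposition~\ref{prop_buchi}. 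By construction a floating interval labelled $\omega$ then evaluates to $\gamma_{\omega}(q_{\init})$, which is $1$ exactly when $\omega\in L$, so $\mcF_{(P)}$ recovers the $\omega$-language of $(P)$.
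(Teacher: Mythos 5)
Your proposal is correct and takes essentially the same route as the paper, which disposes of this proposition in one line by declaring that the construction runs in parallel to that of Proposition~\ref{prop_buchi} with $P$ and $P^{\ast}$ in place of $\Bool Q$ and $\Bool Q^{\ast}$. You simply supply the details the paper leaves implicit (linearity of $\gamma_{\omega}$ via the fact that a union of two subsets of $\N$ is infinite iff one of them is, tail-invariance for well-definedness, and projectivity as the hypothesis enabling coevaluation), all of which are accurate.
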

Functor $\mcF_{(P)}$ associates $P$ to the positive 0-manifold and $P^{\ast}$ to the negative 0-manifold. The rest of the construction of $\mcF_{(P)}$ runs in parallel to that in the proof of Proposition~\ref{prop_buchi}. $\square$

\vspace{0.07in} 

\begin{remark}
Note that any $n$-dimensional TQFT functor into the category of semimodules over a commutative semiring $R$ takes $(n-1)$-manifolds to finitely-generated projective semimodules, see~\cite{GIKKL23}. A semimodule is finitely-generated projective if and only if it is a retract of the free semimodule $R^k$ for some $k$.
\end{remark}

\vspace{0.1in} 

The definition of B\"uchi automata above includes a set $Q_{\ac}\subset Q$ of accepting states. The language $L_{\h}$ associated to $(Q)$ consists of  semi-infinite word $\omega$  for which there exists a path in $(Q)$ that spells $\omega$, starts in an initial state, and goes infinitely many times through a state in $Q_{\ac}$. 

\begin{prop} \label{prop_closed} If $Q_{\ac}=Q$, i.e., every state is accepting, then $L_{\h}\subset \Sigma^{\h}$ is a closed subset.  
\end{prop}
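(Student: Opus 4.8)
The plan is to reduce the B\"uchi acceptance condition to a pure \emph{safety} condition and then exhibit the complement of $L_{\h}$ as a union of basic open (cylinder) sets. When $Q_{\ac}=Q$, the requirement that an accepting run pass infinitely often through $Q_{\ac}$ holds automatically for \emph{every} infinite run. Hence a right-infinite word $\omega=a_1a_2\cdots$ lies in $L_{\h}$ if and only if there is an infinite path in the graph of $(Q)$ that spells $\omega$ and starts in a state of $Q_{\init}$, with no further condition on which states are visited.

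First I would reformulate this membership condition in terms of finite prefixes by a K\"onig's lemma (equivalently, compactness) argument. For fixed $\omega$, let $T_{\omega}$ be the tree whose level-$n$ nodes are the valid partial runs $q_0 q_1\cdots q_n$ reading the prefix $a_1\cdots a_n$ and starting in $Q_{\init}$, with edges given by one-step extension. Since $Q$ is finite, each level of $T_{\omega}$ is finite and $T_{\omega}$ is finitely branching; moreover $T_{\omega}$ is prefix-closed, in that the ancestors of any node are again nodes. By K\"onig's lemma, $T_{\omega}$ has an infinite branch precisely when it is infinite, which (using prefix-closedness together with the finiteness of each level) happens precisely when $T_{\omega}$ has a node at every level. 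Therefore $\omega\in L_{\h}$ if and only if every finite prefix $a_1\cdots a_n$ admits at least one valid partial run starting in $Q_{\init}$.

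Next I would deduce closedness directly from this safety characterization. Suppose $\omega\notin L_{\h}$; then by the above some prefix $a_1\cdots a_N$ admits no valid partial run from an initial state. Every word $\omega'\in\Sigma^{\h}$ agreeing with $\omega$ on its first $N$ letters shares this prefix, and so also lies outside $L_{\h}$. The cylinder consisting of all words beginning with $a_1\cdots a_N$ is thus a basic open neighborhood of $\omega$ contained in $\Sigma^{\h}\setminus L_{\h}$, which shows the complement is open and hence that $L_{\h}$ is closed.

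The crux of the argument, and the only place where the hypothesis is used, is the K\"onig's lemma step together with the observation that $Q_{\ac}=Q$ collapses the B\"uchi condition to bare survival of the run. In general a B\"uchi language need not be closed: an infinite run may exist while meeting the accepting set only finitely often, and no condition on finite prefixes can detect such an infinitary requirement. It is exactly the degeneration of acceptance to ``the run never gets stuck'' that allows arbitrarily long partial runs to be upgraded, via compactness of $\Sigma^{\h}$, to a single infinite run.
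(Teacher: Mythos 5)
Your proof is correct, and since the paper declares this proposition ``straightforward'' and omits the argument entirely, your write-up supplies exactly the intended details: with $Q_{\ac}=Q$ the B\"uchi condition degenerates (by pigeonhole on the finite state set) to mere existence of an infinite run, K\"onig's lemma converts that to a condition on finite prefixes, and the complement of $L_{\h}$ is then a union of cylinder sets. Nothing further is needed.
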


Here $\Sigma^{\h}$ has the standard Cantor set topology. A proof is straightforward. $\square$  

\vspace{0.07in} 

\begin{remark}\label{remark_not_closed}
Consider the automaton $(Q)$ in Figure~\ref{sofic-0011} with the accepting set $Q_{\ac}=\{q_0\}$. This automaton appears in~\cite[Section 3.2]{GIKKL23}. One can form the circular language $L_{\circ}$ that consists of closed paths in this automaton that go through state $q_0$. This circular language is different from the circular language $L_{\circ}'$ which consists of all closed paths in $(Q)$, and 
\begin{equation}
\label{eq_L_circ}
 L_{\circ}' \ = \ L_{\circ} \cup a^{\ast}.
\end{equation}
Note that $a^n$, $n\ge 1$, is in $L_{\circ}'$ but not in $L_{\circ}$. 

\begin{figure}
    \centering
\begin{tikzpicture}[scale=0.6]
\begin{scope}[shift={(0,0)}]

\draw[thick] (0.15,2) arc (0:360:1.5mm);
\draw[thick,fill] (4.15,2) arc (0:360:1.5mm);

\draw[thick,<-] (0.25,2.25) .. controls (0.5,3.25) and (3.5,3.25) .. (3.75,2.25);

\draw[thick,->] (0.25,1.75) .. controls (0.5,0.75) and (3.5,0.75) .. (3.75,1.75);

\draw[thick,->] (-0.25,2.40) arc (30:330:0.75);

\node at (2,3.40) {$b$};
\node at (2,0.60) {$b$};

\node at (-2.05,2) {$a$};

\node at (0.1,1.15) {$q_1$};

\node at (4.65,2) {$q_0$};

\draw[thick,->] (0,3.20) -- (0,2.5);
\draw[thick,->] (4,3.20) -- (4,2.5);


\end{scope}

\end{tikzpicture}
    \caption{Two-state automaton $(Q)$ with the accepting set $Q_{\ac}=\{q_0\}$.}
    \label{sofic-0011}
\end{figure}
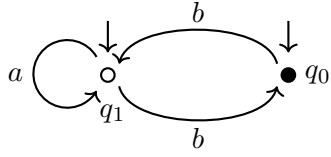

It is easy to see that $L_{\circ}$ can not be the language of circular paths of any FSA~\cite{GIKKL23}, and adding the condition that a circular path goes through a particular subset $Q_{\ac}$ of $Q$ enlarges possible circular languages one can associate with FSAs. 

The same automaton can be used for an example of an $\omega$-language and  a motivation for the B\"uchi condition. Treat the automaton in Figure~\ref{sofic-0011} as a B\"uchi automaton with $Q_{\init}=\{q_0,q_1\}$, $Q_{\ac}=\{q_0\}$. Then the language of semi-infinite paths  that go through $q_0$ infinitely many times is 
\begin{equation}\label{eq_lang_Q}
L_{\h} = (\emptyset + b)(a^{\ast}b^2(b^2)^{\ast})^{\omega},
\end{equation}
while the language of all semi-infinite paths is 
\[
L_{\h}' = (\emptyset + b)(a^{\ast}b^2(b^2)^{\ast}+a^{\ast})^{\omega},
\]
and it contains words with finitely many $b$'s as well. 
Note that $L_{\h}'$ is closed in $\Sigma^{\h}$, for its complement is the union of basis open sets 
$\Sigma^{k}ab(b^{2n})a\Sigma^{\omega}$, over $k,n\ge 0$.  Language $L_{\h}$ is not closed in $\Sigma^{\h}$. In particular, not all $\omega$-regular languages are closed subsets of $\Sigma^{\h}$, see also Proposition~\ref{prop_closed}.  

\end{remark}


\subsection{Topological theories from semi-infinite evaluations}

Suppose given an $\omega$-evaluation $\alpha_{\h}: \Sigma^{\h}\lra \Bool$, that is, a map from the set of right-infinite words to $\Bool$. Define $\Bool$-modules $A(-), A(+)$ using the concatenation  map $\Sigma^{\ast}\times \Sigma^{\h}\lra \Sigma^{\h}$ as follows. Start with free $\Bool$-modules $\mathsf{Fr}(+)=\Bool^{\Sigma^{\ast}},\mathsf{Fr}(-)=\Bool^{\Sigma^{\h}}$ with a basis $\{\langle \omega| \}_{\omega\in \Sigma^{\ast}}$ of finite words, respectively a basis of right-infinite words $\{|\omega'\rangle \}_{\omega'\in \Sigma^{\h}}$. Concatenation composed with evaluation 
\[
\Fr(+)\times \Fr(-)\lra \Bool, \ \ \langle \omega | \times |\omega'\rangle \lra \alpha_{\h}(\omega\omega') \in \Bool 
\]
is a bilinear pairing $\Fr(+)\otimes \Fr(-)\lra \Bool$. Define $A(+)$ and $A(-)$ to be quotients by the kernels of this pairing. Working over the Boolean semirings, they need to be defined set-theoretically: $A(+):=\Fr(+)/\sim$, where $\sum_i \langle\omega_i| \sim \sum_j \langle\omega_j|$ for $\omega_i,\omega_j\in \Sigma^{\ast}$ if and only if for any $\omega'\in \Sigma^{\h}$ 
\begin{equation}\label{eq_equiv_1}
 \sum_i  \alpha_{\h}(\omega_i \omega') = \sum_j \alpha_{\h}(\omega_j \omega')\in \Bool.
\end{equation}
     
Likewise, $A(-):=\Fr(-)/\sim$, where 
where $\sum_i |\omega'_i\rangle \sim \sum_j |\omega'_j\rangle $ for $\omega'_i,\omega'_j\in \Sigma^{\h}$ if and only if for any $\omega\in \Sigma^{\ast}$ 
\begin{equation}\label{eq_equiv_2}
 \sum_i  \alpha_{\h}(\omega \omega'_i) = \sum_j \alpha_{\h}(\omega \omega'_j).
\end{equation}
 $A(+),A(-)$ are $\Bool$-semimodules. Semimodule $A(+)$ is finite if and only if $A(-)$ is, in which case they are dual semimodules and have the same cardinality. 

Evaluation $\alpha_{\h}$ and the associated language $L_{\h}$ such that $A(+)$ is finite are called an \emph{$\omega$-finite evaluation} and an \emph{$\omega$-finite language}, see~\cite{Sta83} and references there. 

\vspace{0.07in} 

If, in addition, $A(+)$ is a projective semimodule (a retract of $\Bool^n$, for some $n$), then $A(+)$ with the  action of $\Sigma$ on it and evaluation $\alpha_{\h}$ gives rise to a TQFT 
\begin{equation}
        \mcF_{\alpha_{\h}} \ : \ \Cob^{\h}_{\Sigma} \lra \Bool\mathrm{-pmod},
\end{equation}
similar to the constructions explained earlier. 

\vspace{0.1in}

We do not attempt to state and prove the analogue of Theorem~\ref{theorem:regular_sofic} for semi-infinite languages, since the analogue of the condition that $\alphaZ^{-1}(1)$ is closed appears to be rather subtle for $\omega$-regular languages, see~\cite{Sta83,PePi04,Sta97} and references therein.

%
%

\section{Examples of topological theories for infinite automata} \label{sec_examples_automata}

In some examples below we choose a finite or regular language $W$ and define $L=W^{\perp}_{\h}$ to consist of semi-infinite words that do not contain an element of $W$ as a subword. Languages $L$ are $\omega$-regular, but notice that, unlike for bi-infinite words, not every $\omega$-regular language has this form.


\begin{example}
\label{ex:sofic-system-0000}
Let $\Sigma=\{ a\}$ and $L = \{ a^{+\infty} \}=\Sigma^{\h}$ consists of the unique possible semi-infinite word. Then $A(+)=\Bool \langle \varnothing|$, $A(-)=\Bool | a^{+\infty}\rangle$ are free rank one $\Bool$-modules, with the minimal automaton for this $\omega$-evaluation shown in Figure~\ref{sofic-0025} on the left, with the unique state being accepting. The identity decomposition is $\id_+=|a^{+\infty} \rangle \otimes \langle\emptyset |$.  

\begin{figure}
    \centering
\begin{tikzpicture}[scale=0.6]
\begin{scope}[shift={(0,0)}, decoration={markings,mark=at position 0.5 with {\arrow{>}}}]
\node at (-1,3.0) {$a$}; 
\draw[thick,postaction={decorate}] (-0.25,2.35) .. controls (-3.00,3.5) and (-3.00,0.5) .. (-0.25,1.65);

\draw[thick] (0.35,2) arc (0:360:3.5mm);
\draw[thick,->] (0.10,0.5) -- (0.10,1.5);
\end{scope}

\begin{scope}[shift={(7,0)}]
\node at (-1,2) {$A(+)$};
\node at (1.5,3.10) {$+$};
\draw[thick,dashed] (0.5,2.75) -- (2.5,2.75);
\draw[thick,->] (1.5,1.25) -- (1.5,2.75);
\draw[thick,fill] (1.65,2) arc (0:360:1.5mm);
\node at (2.1,2) {$\omega$};
\node at (1.5,0.5) {$\omega \in \Sigma^*$};
\end{scope}

\begin{scope}[shift={(15,0)}]
\node at (-1,2) {$A(-)$};
\node at (1.5,3.10) {$-$};
\draw[thick,dashed] (0.5,2.75) -- (2.5,2.75);
\draw[thick,<-] (1.5,1.25) -- (1.5,2.75);
\draw[thick,fill] (1.65,2) arc (0:360:1.5mm);
\node at (2.15,2) {$\omega'$};
\node at (1.5,0.5) {$\omega' \in \Sigma^{\h}$};
\node at (3,2) {$=$};
\node at (4.5,3.10) {$-$};
\draw[thick,dashed] (3.5,2.75) -- (5.5,2.75);
\draw[thick,<-] (4.5,1.25) -- (4.5,2.75);
\draw[thick,fill] (4.65,2.25) arc (0:360:1.5mm);
\draw[thick,fill] (4.65,1.75) arc (0:360:1.5mm);
\node at (5,1.75) {$\vdots$};
\end{scope}

\end{tikzpicture}
    \caption{Diagrams for  Example~\ref{ex:sofic-system-0000}. Left: an automaton for $W^{\perp}_{\h}$. Small arrow denotes the initial state. Notice that the states of $A(+)$ are represented by finite words (and their $\Bool$-linear combinations), while those of $A(-)$ are represented by right-infinite words in $\Sigma^{\h}$. In the diagram, letters are accumulating at the \emph{out-oriented} inner  boundary points but not at the \emph{in-oriented} inner points. }
    \label{sofic-0025}
\end{figure}
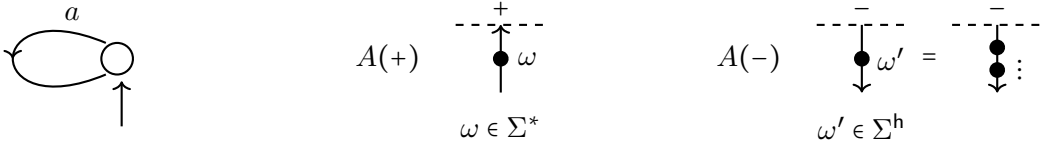

\end{example}

\begin{figure}
    \centering
\begin{tikzpicture}[scale=0.6]

\begin{scope}[shift={(0,0)}]

\node at (-1.65,1.15) {$A(+)$};
\node at (0.5,2.35) {$+$};
\draw[thick,dashed] (0,2) -- (1,2);
\draw[thick,<-] (0.5,2) -- (0.5,0.5);
\draw[thick,fill] (0.65,1.25) arc (0:360:1.5mm);
\node at (1,1.25) {$a$};

\node at (2,2.35) {$+$};
\draw[thick,dashed] (1.5,2) -- (2.5,2);
\draw[thick,<-] (2,2) -- (2,0.5);
\draw[thick,fill] (2.15,1.25) arc (0:360:1.5mm);
\node at (2.5,1.25) {$b$};
\end{scope}

\begin{scope}[shift={(8,0)}]
\node at (-1.65,1.15) {$A(-)$};
\node at (0.5,2.35) {$-$};
\draw[thick,dashed] (0,2) -- (1,2);
\draw[thick,->] (0.5,2) -- (0.5,0.5);
\draw[thick,fill] (0.65,1.25) arc (0:360:1.5mm);
\node at (1.25,1.25) {$b^{+\infty}$};

\node at (2.2,2.35) {$-$};
\draw[thick,dashed] (1.7,2) -- (2.7,2);
\draw[thick,->] (2.2,2) -- (2.2,0.5);

\draw[thick,fill] (2.35,1.5) arc (0:360:1.5mm);
\node at (2.65,1.5) {$a$};

\draw[thick,fill] (2.35,0.95) arc (0:360:1.5mm);
\node at (3,0.95) {$b^{+\infty}$};

\end{scope}

\end{tikzpicture}
    \caption{Example~\ref{ex:sofic-system-001}. Words for states in $A(+)$ are finite (elements of $\Sigma^*$). Words representing elements of $A(-)$ are right-infinite (elements of $\Sigma^{\h}$).
    }
    \label{sofic-0008}
\end{figure}
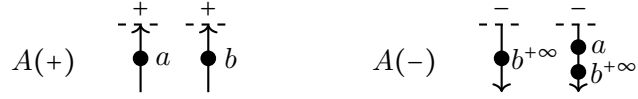

\begin{figure}
    \centering
\begin{tikzpicture}[scale=0.6]
\begin{scope}][shift={(0,0)}]

\node at (3.8,4.7) {$A(-)$};

\begin{scope}[shift={(0.15,0)}]
\node at (0.4,2.5) {$1$};
\node at (0.4,1.5) {$1$};
\node at (0.4,0.5) {$1$};

\node at (0.4,4.7) {$y_0$};
\node at (0.4,4.2) {$-$};
\draw[thick,dashed] (0.1,4.0) -- (0.7,4.0);
\draw[thick,->] (0.4,4.0) -- (0.4,3.0); 
\draw[thick,fill] (0.52,3.5) arc (0:360:1.15mm);
\node at (1.08,3.55) {$b^{+\infty}$};
\end{scope}

\begin{scope}[shift={(0.45,0)}]
\node at (1.6,2.5) {$1$};
\node at (1.6,1.5) {$0$};
\node at (1.6,0.5) {$0$};

\node at (1.6,4.7) {$y_1$};
\node at (1.6,4.2) {$-$};
\draw[thick,dashed] (1.3,4.0) -- (1.9,4.0);
\draw[thick,->] (1.6,4.0) -- (1.6,3.0);

\draw[thick,fill] (1.72,3.7) arc (0:360:1.15mm);
\node at (2.0,3.7) {$a$};

\draw[thick,fill] (1.72,3.35) arc (0:360:1.15mm);
\node at (2.40,3.25) {$b^{+\infty}$};
\end{scope}

\begin{scope}[shift={(-0.25,0)}]
\node at (-2.85,1.5) {$A(+)$};

\begin{scope}[shift={(0,0.1)}]
\node at (-1.25,2.5) {$x_0$};
\node at (-0.5,2.95) {$+$};
\draw[thick,dashed] (-0.8,2.8) -- (-0.2,2.8);
\draw[thick,<-] (-0.5,2.8) -- (-0.5,2.15);     
\end{scope}

\node at (-1.25,1.5) {$x_1$};
\node at (-0.5,1.95) {$+$};
\draw[thick,dashed] (-0.8,1.8) -- (-0.2,1.8);
\draw[thick,<-] (-0.5,1.8) -- (-0.5,1.15);
\draw[thick,fill] (-0.38,1.42) arc (0:360:1.15mm);
\node at (-0.13,1.42) {$a$};

\begin{scope}[shift={(0,-0.1)}]
\node at (-1.25,0.5) {$x_2$};
\node at (-0.5,0.95) {$+$};
\draw[thick,dashed] (-0.8,0.8) -- (-0.2,0.8);
\draw[thick,<-] (-0.5,0.8) -- (-0.5,0.15); 
\draw[thick,fill] (-0.38,0.42) arc (0:360:1.15mm);
\node at (-0.13,0.42) {$b$};
\end{scope}
 
\end{scope}

\begin{scope}[shift={(-0.1,0)}]
\draw[thick] (0.1,0.1) .. controls (0,0.5) and (0,2.5) .. (0.1,2.9);
\end{scope}

\begin{scope}[shift={(-0.3,0)}]
\draw[thick] (2.9,0.1) .. controls (3,0.5) and (3,2.5) .. (2.9,2.9);
\end{scope}

\begin{scope}[shift={(2,0)}]
\node at (4.0,2.00) {$x_0+x_1=x_0$};
\node at (4.65,1.25) {$x_2=x_1$};


\node at (7.25,1.6) {$\Rightarrow$};
\node at (12.65,1.6) {$A(+) = \langle x_0,x_1: x_0+x_1=x_0\rangle$};

\node at (4.05,0.5) {$y_0+y_1=y_0$};

\node at (7.25,0.5) {$\Rightarrow$};
\node at (12.5,0.5) {$A(-) = \langle y_0,y_1: y_0+y_1=y_0\rangle$};
\end{scope}

\end{scope}

\end{tikzpicture}
    \caption{Generators of $A(+),A(-)$ and the pairing matrix for Example~\ref{ex:sofic-system-001}.}
    \label{sofic-0009}
\end{figure}
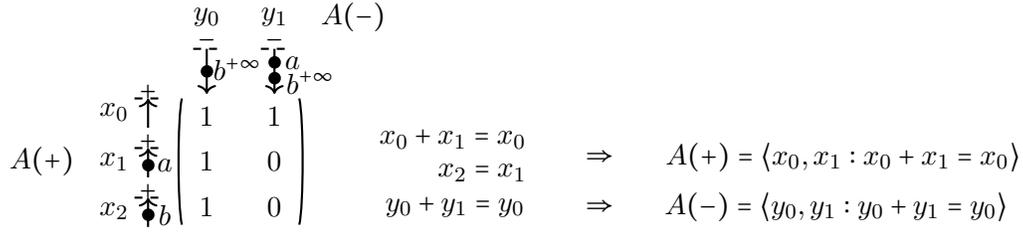


\begin{example}
\label{ex:sofic-system-001} 
Let $L= \{ab^{+\infty}, b^{+\infty} \}\subset \Sigma^{\h}$. Here $L=W^{\perp}_{\h}$ for $W=\{ba,aa\}$. 
It is clear that $A(+)$ is spanned by finite words $\emptyset, a,b$ and $A(-)$ by infinite words $b^{+\infty},ab^{+\infty}$, see Figure~\ref{sofic-0008} and Figure~\ref{sofic-0009}. The latter figure shows  the bilinear pairing matrix for these spanning sets. Consequently, $A(+)$ is a distributive but not a free $\Bool$-module with 3 elements $0,x_0,x_1$. Likewise, $A(-)$ consists of 3 elements $0,y_0,y_1$. 
The identity decomposition is  
\[ \id_+ \ = \ \langle a | \otimes  | b^{+\infty}\rangle   +  \langle \emptyset | \otimes | a b^{+\infty}\rangle , 
\]
shown in Figure~\ref{sofic-0010}. 
Language $L$ and the corresponding pairing can be upgraded to a TQFT $\mcF$ with $\mcF(+)=A(+)$ and $\mcF(-)=A(-)$, the corresponding action of $a,b$ on state spaces and the identity decomposition as above. 

\begin{figure}
    \centering
\begin{tikzpicture}[scale=0.6]
\begin{scope}[shift={(0.0,0)}, decoration={markings,mark=at position 0.5 with {\arrow{>}}}]
\draw[thick,dashed] (0,3) -- (3,3);
\node at (0.5,3.35) {$+$};
\node at (2.5,3.35) {$-$};
\draw[thick,postaction={decorate}] (2.5,3) .. controls (2.4,0) and (0.6,0) .. (0.5,3);

\node at (4.5,1.75) {$=$};
\end{scope}

\begin{scope}[shift={(6,0)}, decoration={markings,mark=at position 0.5 with {\arrow{>}}}]
\draw[thick,dashed] (0,3) -- (3,3);
\node at (0.5,3.35) {$+$};
\node at (2.5,3.35) {$-$};

\draw[thick,->] (0.5,0.5) -- (0.5,3);
\draw[thick,fill] (0.65,1.75) arc (0:360:1.5mm);
\node at (1.05,1.75) {$a$};

\draw[thick,->] (2.5,3) -- (2.5,0.5);
\draw[thick,fill] (2.65,1.75) arc (0:360:1.5mm);
\node at (3.45,1.75) {$b^{+\infty}$};

\node at (5.00,1.75) {$+$};
\end{scope}

\begin{scope}[shift={(12.25,0)}, decoration={markings,mark=at position 0.5 with {\arrow{>}}}]
\draw[thick,dashed] (0,3) -- (3,3);
\node at (0.5,3.35) {$+$};
\node at (2.5,3.35) {$-$};

\draw[thick,->] (0.5,0.5) -- (0.5,3);
\draw[thick,fill] (2.65,1.33) arc (0:360:1.5mm);
\node at (3.00,2.16) {$a$};
\draw[thick,fill] (2.65,2.16) arc (0:360:1.5mm);
\node at (3.40,1.33) {$b^{+\infty}$};

\draw[thick,->] (2.5,3) -- (2.5,0.5);

\end{scope}

\end{tikzpicture}
    \caption{Identity decomposition for the language in Example~\ref{ex:sofic-system-001}.}
    \label{sofic-0010}
\end{figure}
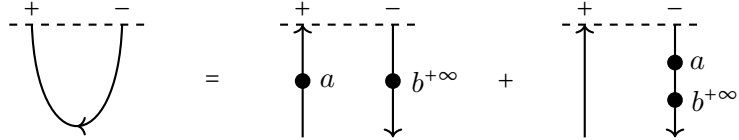

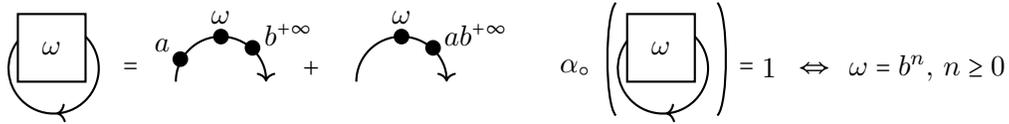
\begin{figure}
    \centering
\begin{tikzpicture}[scale=0.6]
\begin{scope}[shift={(0,0)}, decoration={markings,mark=at position 0.5 with {\arrow{>}}}]

\draw[thick] (1.5,0) rectangle (3,1.5);
\node at (2.25,0.75) {$\omega$};
\draw[thick,postaction={decorate}] (3,1) arc (45:-218:1);

\node at (4,0.30) {$=$};

\draw[thick,<-] (7,0) arc (0:180:1); 
\draw[thick,fill] (5.25,0.50) arc (0:360:1.5mm);
\node at (4.7,0.80) {$a$};
\draw[thick,fill] (6.15,1.0) arc (0:360:1.5mm);
\node at (6,1.43) {$\omega$};
\draw[thick,fill] (6.85,0.75) arc (0:360:1.5mm);
\node at (7.5,1.05) {$b^{+\infty}$};

\node at (8,0.30) {$+$};

\draw[thick,<-] (11,0) arc (0:180:1); 
\draw[thick,fill] (10.15,1.0) arc (0:360:1.5mm);
\node at (10,1.43) {$\omega$};
\draw[thick,fill] (10.85,0.75) arc (0:360:1.5mm);
\node at (11.65,1.05) {$a b^{+\infty}$};

\end{scope}

\begin{scope}[shift={(15,0)}, decoration={markings,mark=at position 0.5 with {\arrow{>}}}]
\node at (-1.15,0.30) {$\alpha_{\circ}$};
\draw[thick] (-0.25,-0.75) .. controls (-0.50,-0.50) and (-0.50,1.50) .. (-0.25,1.75);

\draw[thick] (2,-0.75) .. controls (2.25,-0.50) and (2.25,1.50) .. (2,1.75);

\draw[thick] (0,0) rectangle (1.5,1.5);
\node at (0.75,0.75) {$\omega$};
\draw[thick,postaction={decorate}] (1.5,1) arc (45:-218:1);

\node at (2.65,0.30) {$=$};
\node at (3.15,0.30) {$1$};
\node at (4.15,0.30) {$\Leftrightarrow$};

\node at (6.65,0.30) {$\omega=b^n$, $n\geq 0$};

\end{scope}

\end{tikzpicture}
    \caption{Computing the associated circular language $L_{\circ}$ for the $\omega$-language in Example~\ref{ex:sofic-system-001}. To evaluate a circle with a finite word $\omega$ apply decomposition of the identity anywhere along the circle. The circular language in this case is $L_{\circ}=b^{\ast}$.   }
    \label{sofic-0012}
\end{figure}

For this boolean TQFT the state spaces are projective but not free $\Bool$-modules. 
The circular language $L_{\circ}$ for this TQFT is $b^{\ast}=\{ b^n | n\ge 0\}$, computed in Figure~\ref{sofic-0012}.

The same $\omega$-evaluation can be described via a TQFT with the state space $A(+)=\Bool\langle v_1,v_2\rangle$ a free rank two $\Bool$-module, with the action of $a$ and $b$ shown in Figure~\ref{sofic-0007} and the initial state $q_{\init}=v_1+v_2$. The decomposition of the identity for this TQFT is $\id_+=v_1\otimes v_1^{\ast}+v_2\otimes v_2^{\ast}$ and the associated circular language is again $b^{\ast}$, which consists of all finite words $\omega$ so that $\tr_{A(+)}(\omega)=1$. 

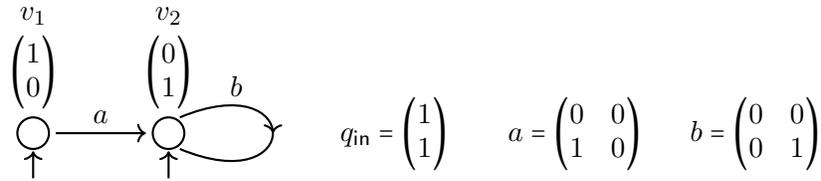
\begin{figure}
    \centering
\begin{tikzpicture}[scale=0.6]



\begin{scope}[shift={(8,0)}, decoration={markings,mark=at position 0.5 with {\arrow{>}}}]
\node at (1.5,2.35) {$a$};
\draw[thick] (0.35,2) arc (0:360:3.5mm);
\draw[thick,<-] (0.0,1.6) -- (0.0,1);
\draw[thick,->] (0.5,2) -- (2.5,2);
\node at (4.5,3.05) {$b$};
\draw[thick] (3.35,2.0) arc (0:360:3.5mm);
\draw[thick,<-] (3.0,1.6) -- (3.0,1);

\node at (0.0,4.6) {$v_1$};
\node at (0.0,3.35) {$\begin{pmatrix} 1\\0 \end{pmatrix}$};

\node at (3,4.6) {$v_2$};
\node at (3,3.35) {$\begin{pmatrix} 0\\1 \end{pmatrix}$};

\draw[thick,postaction={decorate}] (3.25,2.35) .. controls (6,3.5) and (6,0.5) .. (3.25,1.65);

\node at (8,2) {$q_{\init}=\begin{pmatrix} 1 \\ 1 \\  \end{pmatrix}$};

\node at (12,2) {$a=\begin{pmatrix} 0 & 0\\ 1 & 0\\ \end{pmatrix}$};

\node at (16,2) {$b=\begin{pmatrix} 0 & 0\\ 0 & 1\\ \end{pmatrix}$};

\end{scope}

\end{tikzpicture}
    \caption{Automaton and action of $a$ and $b$ in  Example~\ref{ex:sofic-system-001}.}
    \label{sofic-0007}
\end{figure}

\end{example}


\begin{example} 
\label{ex:sofic-system-002} 
Let $W=\{ba\}$, and set  
 $L=W^{\perp}_{\h}=\{ a^k b^{+\infty}: k\geq 0\}\sqcup \{ a^{+\infty}\}$. An automaton for this $\omega$-evaluation is shown in  Figure~\ref{sofic-0013}. 
Generating vectors for the state spaces $A(-)$ and $A(+)$ are given in Figure~\ref{sofic-0014}. Note that $|a b^{+\infty}\rangle = |a^{+\infty}\rangle$, hence the vector $|a^{+\infty}\rangle$ is not shown. 

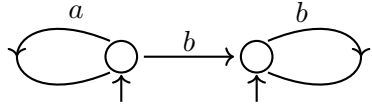
\begin{figure}
    \centering
\begin{tikzpicture}[scale=0.6]


\begin{scope}[shift={(10,0)}, decoration={markings,mark=at position 0.5 with {\arrow{>}}}]
\node at (-1,3.0) {$a$}; 
\draw[thick,postaction={decorate}] (-0.25,2.35) .. controls (-3.00,3.5) and (-3.00,0.5) .. (-0.25,1.65);

\node at (1.5,2.35) {$b$};
\draw[thick] (0.35,2) arc (0:360:3.5mm);
\draw[thick,<-] (0.0,1.6) -- (0.0,1);
\draw[thick,->] (0.5,2) -- (2.5,2);
\draw[thick] (3.35,2.0) arc (0:360:3.5mm);
\draw[thick,<-] (3.0,1.6) -- (3.0,1);

\node at (4,3) {$b$};
\draw[thick,postaction={decorate}] (3.25,2.35) .. controls (6,3.5) and (6,0.5) .. (3.25,1.65);
\end{scope}

\end{tikzpicture}
    \caption{The automaton for the $\omega$-language in Example~\ref{ex:sofic-system-002}. }
    \label{sofic-0013}
\end{figure}

\begin{figure}
    \centering
\begin{tikzpicture}[scale=0.6]
\begin{scope}[shift={(0,0)}]

\node at (0.5,2) {$A(+)$}; 

\node at (3.25,3.35) {$+$};
\draw[thick,dashed] (2.5,3) -- (4.0,3);
\draw[thick,<-] (3.25,3) -- (3.25,1);

\node at (4.5,2) {$=$};

\node at (5.75,3.35) {$+$};
\draw[thick,dashed] (5.0,3) -- (6.5,3);
\draw[thick,<-] (5.75,3) -- (5.75,1);
\draw[thick,fill] (5.90,2) arc (0:360:1.5mm);
\node at (6.23,2) {$a$};

\node at (7.0,2) {$=$};

\node at (8.25,3.35) {$+$};
\draw[thick,dashed] (7.5,3) -- (9.0,3);
\draw[thick,<-] (8.25,3) -- (8.25,1);
\draw[thick,fill] (8.40,2) arc (0:360:1.5mm);
\node at (9.65,2.1) {$a^k, k\geq 1$};
\end{scope}

\begin{scope}[shift={(13,0)}]

\node at (2.75,3.35) {$+$};
\draw[thick,dashed] (2,3) -- (3.5,3);
\draw[thick,<-] (2.75,3) -- (2.75,1);
\draw[thick,fill] (2.90,2) arc (0:360:1.5mm);
\node at (3.20,2) {$b$};

\node at (4.0,2) {$=$};

\node at (5.25,3.35) {$+$};
\draw[thick,dashed] (4.5,3) -- (6.0,3);
\draw[thick,<-] (5.25,3) -- (5.25,1);
\draw[thick,fill] (5.40,2.35) arc (0:360:1.5mm);
\node at (5.95,2.35) {$b^m$};
\draw[thick,fill] (5.40,1.5) arc (0:360:1.5mm);
\node at (5.95,1.65) {$a^k$};
\node at (8.5,2.0) {$m\geq 1, k\geq 0$};
\end{scope}

\begin{scope}[shift={(0.5,-4.0)}]

\node at (0,2) {$A(-)$}; 

\node at (2.75,3.35) {$-$};
\draw[thick,dashed] (2,3) -- (3.5,3);
\draw[thick,->] (2.75,3) -- (2.75,1);
\draw[thick,fill] (2.90,2) arc (0:360:1.5mm);
\node at (3.5,2) {$b^{+\infty}$};

\end{scope}

\begin{scope}[shift={(6.0,-4.0)}]

\node at (2.75,3.35) {$-$};
\draw[thick,dashed] (2,3) -- (3.5,3);
\draw[thick,->] (2.75,3) -- (2.75,1);
\draw[thick,fill] (2.90,2.35) arc (0:360:1.5mm);
\node at (3.25,2.35) {$a$};

\draw[thick,fill] (2.90,1.65) arc (0:360:1.5mm);
\node at (3.5,1.65) {$b^{+\infty}$};

\node at (4.5,2) {$=$};

\begin{scope}[shift={(0.5,0)}]
\node at (5.0,3.35) {$-$};
\draw[thick,dashed] (4.5,3) -- (5.5,3);
\draw[thick,->] (5.0,3) -- (5.0,1);
\draw[thick,fill] (5.15,2.35) arc (0:360:1.5mm);
\node at (5.75,2.35) {$a^k$,};
\node at (7.45,2.35) {$k\geq 1$};

\draw[thick,fill] (5.15,1.65) arc (0:360:1.5mm);
\node at (5.9,1.65) {$b^{+\infty}$};
\end{scope}

\end{scope}

\end{tikzpicture}
    \caption{Generators of $A(+)$ and $A(-)$ in  Example~\ref{ex:sofic-system-002}. }
    \label{sofic-0014}
\end{figure}
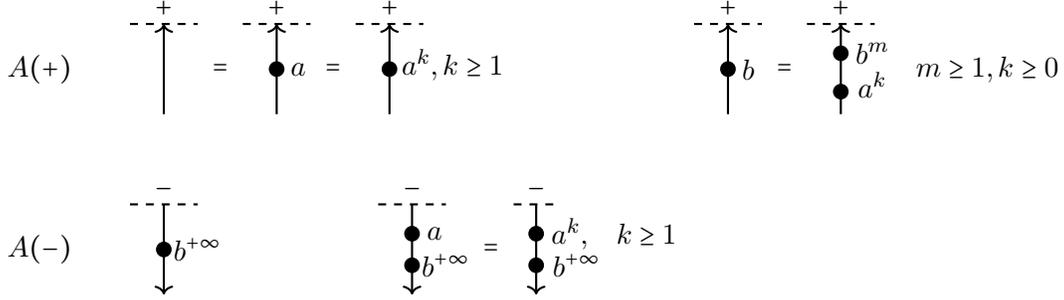

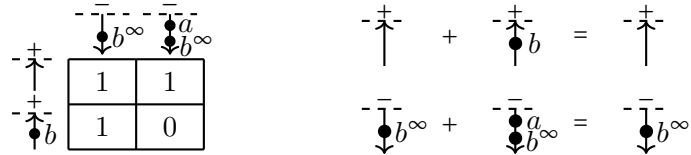
\begin{figure}
    \centering
\begin{tikzpicture}[scale=0.6]

\begin{scope}[shift={(0,0)}]

\draw[thick] (0,2) -- (3,2);
\draw[thick] (0,1) -- (3,1);
\draw[thick] (0,0) -- (3,0);
\draw[thick] (0,0) -- (0,2);
\draw[thick] (1.5,0) -- (1.5,2);
\draw[thick] (3,0) -- (3,2);

\node at (0.75,1.5) {$1$};
\node at (2.25,1.5) {$1$};
\node at (0.75,0.5) {$1$};
\node at (2.25,0.5) {$0$};

\node at (0.75,3.2) {$-$};
\draw[thick,dashed] (0.15,3) -- (1.35,3);
\draw[thick,->] (0.75,3) -- (0.75,2.1);
\draw[thick,fill] (0.85,2.5) arc (0:360:1.00mm);
\node at (1.35,2.5) {$b^{\infty}$};

\node at (2.25,3.2) {$-$};
\draw[thick,dashed] (1.65,3) -- (2.85,3); 
\draw[thick,->] (2.25,3) -- (2.25,2.1);
\draw[thick,fill] (2.35,2.75) arc (0:360:1.00mm);
\node at (2.6,2.75) {$a$};
\draw[thick,fill] (2.35,2.40) arc (0:360:1.00mm);
\node at (2.85,2.30) {$b^{\infty}$};

\begin{scope}[shift={(-0.25,0)}]
\node at (-0.5,2.2) {$+$};
\draw[thick,dashed] (-1,2) -- (0,2);
\draw[thick,<-] (-0.5,2.0) -- (-0.5,1.3);

\node at (-0.5,1.05) {$+$};
\draw[thick,dashed] (-1,0.8) -- (0,0.8); 
\draw[thick,<-] (-0.5,0.8) -- (-0.5,0);
\draw[thick,fill] (-0.40,0.35) arc (0:360:1.0mm);
\node at (-0.15,0.35) {$b$};
\end{scope}

\end{scope}

\begin{scope}[shift={(6.5,0.85)}]

\node at (0.5,2.2) {$+$};
\draw[thick,dashed] (0,2) -- (1,2);
\draw[thick,<-] (0.5,2) -- (0.5,1);

\node at (1.95,1.65) {$+$};

\begin{scope}[shift={(0.9,0)}]
\node at (2.5,2.2) {$+$};
\draw[thick,dashed] (2,2) -- (3,2);
\draw[thick,<-] (2.5,2) -- (2.5,1);
\draw[thick,fill] (2.63,1.5) arc (0:360:1.25mm);
\node at (2.92,1.52) {$b$};
\end{scope}

\node at (4.85,1.65) {$=$};

\begin{scope}[shift={(1.8,0)}]
\node at (4.5,2.2) {$+$};
\draw[thick,dashed] (4,2) -- (5,2);
\draw[thick,<-] (4.5,2) -- (4.5,1);
\end{scope}

\end{scope}

\begin{scope}[shift={(6.5,-1.1)}]

\node at (0.5,2.2) {$-$};
\draw[thick,dashed] (0,2) -- (1,2);
\draw[thick,->] (0.5,2) -- (0.5,1);
\draw[thick,fill] (0.63,1.5) arc (0:360:1.25mm);
\node at (1.12,1.5) {$b^{\infty}$};

\node at (1.95,1.65) {$+$};

\begin{scope}[shift={(0.9,0)}]
\node at (2.5,2.2) {$-$};
\draw[thick,dashed] (2,2) -- (3,2);
\draw[thick,->] (2.5,2) -- (2.5,1.0);
\draw[thick,fill] (2.63,1.73) arc (0:360:1.25mm);
\node at (2.92,1.70) {$a$};
\draw[thick,fill] (2.63,1.35) arc (0:360:1.25mm);
\node at (3.12,1.30) {$b^{\infty}$};
\end{scope}

\node at (4.85,1.65) {$=$};

\begin{scope}[shift={(1.8,0)}]
\node at (4.5,2.2) {$-$};
\draw[thick,dashed] (4,2) -- (5,2);
\draw[thick,->] (4.5,2) -- (4.5,1);
\draw[thick,fill] (4.63,1.5) arc (0:360:1.25mm);
\node at (5.12,1.5) {$b^{\infty}$};
\end{scope}

\end{scope}

\end{tikzpicture}
    \caption{Pairing matrix and skein relations for Example~\ref{ex:sofic-system-002}. }
    \label{sofic-0015}
\end{figure}

Figure~\ref{sofic-0015} shows the inner products of these generators and skein relations. The state spaces, as $\Bool$-modules, are isomorphic to those in the previous example, and they are projective modules. The identity decomposition, shown in Figure~\ref{sofic-0016}, is 
\[ \id_+ \ = \  \langle \emptyset | \otimes | a b^{+\infty}\rangle   + \langle b | \otimes | a b^{+\infty}\rangle . 
\]
The circular language is  $L_{\circ}=a^{\ast}+b^{\ast}$. 

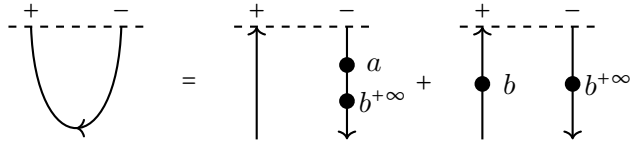
\begin{figure}
    \centering
\begin{tikzpicture}[scale=0.6]
\begin{scope}[shift={(0.0,0)}, decoration={markings,mark=at position 0.5 with {\arrow{>}}}]
\draw[thick,dashed] (0,3) -- (3,3);
\node at (0.5,3.35) {$+$};
\node at (2.5,3.35) {$-$};
\draw[thick,postaction={decorate}] (2.5,3) .. controls (2.4,0) and (0.6,0) .. (0.5,3);

\node at (4,1.75) {$=$};
\end{scope}

\begin{scope}[shift={(5,0)}, decoration={markings,mark=at position 0.5 with {\arrow{>}}}]
\draw[thick,dashed] (0,3) -- (3,3);
\node at (0.5,3.35) {$+$};
\node at (2.5,3.35) {$-$};

\draw[thick,->] (0.5,0.5) -- (0.5,3);

\draw[thick,->] (2.5,3) -- (2.5,0.5);
\draw[thick,fill] (2.65,2.15) arc (0:360:1.5mm);
\node at (3.1,2.15) {$a$};

\draw[thick,fill] (2.65,1.35) arc (0:360:1.5mm);
\node at (3.3,1.35) {$b^{+\infty}$};

\node at (4.2,1.75) {$+$};
\end{scope}

\begin{scope}[shift={(10,0)}, decoration={markings,mark=at position 0.5 with {\arrow{>}}}]
\draw[thick,dashed] (0,3) -- (3,3);
\node at (0.5,3.35) {$+$};
\node at (2.5,3.35) {$-$};

\draw[thick,->] (0.5,0.5) -- (0.5,3);
\draw[thick,fill] (0.65,1.73) arc (0:360:1.5mm);
\node at (1.10,1.73) {$b$};

\draw[thick,->] (2.5,3) -- (2.5,0.5);
\draw[thick,fill] (2.65,1.73) arc (0:360:1.5mm);
\node at (3.30,1.73) {$b^{+\infty}$};

\end{scope}

\end{tikzpicture}
    \caption{Identity decomposition in  Example~\ref{ex:sofic-system-002}.}
    \label{sofic-0016}
\end{figure}

\end{example}


\begin{example}
\label{ex:sofic-system-003}
Consider the one-sided shift, with $\Sigma=\{a,b\}$ and $W=\{bb\}$.  This is the golden mean subshift as in Example~\ref{ex_golden}, but in the one-sided case. 
 A word $\omega$ is in the language $L=W^{\perp}_{\h}$ if it does not contain $bb$ as a subword. Figure~\ref{sofic-0018} shows an automaton for that language, with $Q_{\ac}=Q$. 

\begin{figure}
    \centering
\begin{tikzpicture}[scale=0.6]




\begin{scope}[shift={(16,0)}, decoration={markings,mark=at position 0.5 with {\arrow{>}}}]
    
\node at (-2.40,2.65) {$a$};
\draw[thick,postaction={decorate}] (-0.25,2.35) .. controls (-3.00,3.5) and (-3.00,0.5) .. (-0.25,1.65);
\draw[thick] (0.35,2) arc (0:360:3.5mm);
\node at (0,2) {$0$}; 
\draw[thick,postaction={decorate}] (0.4,2.2) .. controls (0.5,2.75) and (2.5,2.75) .. (2.6,2.2);
\draw[thick,postaction={decorate}] (2.6,1.8) .. controls (2.5,1.5) and (0.5,1.5) .. (0.4,1.8);
\node at (1.80,3.00) {$b$};
\node at (1.20,1.15) {$a$};
\node at (3,2) {$1$};
\draw[thick] (3.35,2.0) arc (0:360:3.5mm);
\draw[thick,->] (0,0.7) -- (0,1.5);

\end{scope}

\end{tikzpicture}
    \caption{An automaton for  Example~\ref{ex:sofic-system-003}. An example of a word in this language is $abaaabaabaa\cdots$. }
    \label{sofic-0018}
\end{figure}
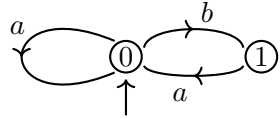
  
The generating sets for $A(-)$ and $A(+)$ are given in Figure~\ref{sofic-0019}, with the matrix of the inner product shown in Figure~\ref{sofic-0020}. 
Identity decomposition in  Figure~\ref{sofic-0021} implies that the circular language $L_{\circ}$ for this boolean TQFT consists of circular words without a consecutive pair $bb$. This language  can be described by a regular expression in Figure~\ref{sofic-0022}. 

\begin{figure}
    \centering
\begin{tikzpicture}[scale=0.6]
\begin{scope}[shift={(0,0)}]

\node at (0.5,2) {$A(+)$:}; 

\node at (3.25,3.35) {$+$};
\draw[thick,dashed] (2.5,3) -- (4.0,3);
\draw[thick,<-] (3.25,3) -- (3.25,1);

\node at (4.5,2) {$=$};

\node at (5.75,3.35) {$+$};
\draw[thick,dashed] (5.0,3) -- (6.5,3);
\draw[thick,<-] (5.75,3) -- (5.75,1);
\draw[thick,fill] (5.90,2) arc (0:360:1.5mm);
\node at (6.23,2) {$a$};

\node at (7.0,2) {$=$};

\node at (8.25,3.35) {$+$};
\draw[thick,dashed] (7.5,3) -- (9.0,3);
\draw[thick,<-] (8.25,3) -- (8.25,1);
\draw[thick,fill] (8.40,2) arc (0:360:1.5mm);
\node at (9.00,2.1) {$a^k$,};
\node at (10.75,2.1) {$k\geq 1$};
\end{scope}

\begin{scope}[shift={(13,0)}]

\node at (2.75,3.35) {$+$};
\draw[thick,dashed] (2,3) -- (3.5,3);
\draw[thick,<-] (2.75,3) -- (2.75,1);
\draw[thick,fill] (2.90,2) arc (0:360:1.5mm);
\node at (3.20,2) {$b$};

\node at (4.0,2) {$=$};

\node at (5.25,3.35) {$+$};
\draw[thick,dashed] (4.5,3) -- (6.0,3);
\draw[thick,<-] (5.25,3) -- (5.25,1);
\draw[thick,fill] (5.40,2.35) arc (0:360:1.5mm);
\node at (5.75,2.35) {$b$};
\draw[thick,fill] (5.40,1.5) arc (0:360:1.5mm);
\node at (5.95,1.65) {$a^k$};
\node at (8.00,2.0) {$k\geq 0$};
\end{scope}

\begin{scope}[shift={(0.5,-4.0)}]

\node at (0,2) {$A(-)$:}; 

\node at (2.75,3.35) {$-$};
\draw[thick,dashed] (2,3) -- (3.5,3);
\draw[thick,->] (2.75,3) -- (2.75,1);
\draw[thick,fill] (2.90,2) arc (0:360:1.5mm);
\node at (3.65,2) {$a^{+\infty}$};

\end{scope}

\begin{scope}[shift={(6.0,-4.0)}]

\node at (2.75,3.35) {$-$};
\draw[thick,dashed] (2,3) -- (3.5,3);
\draw[thick,->] (2.75,3) -- (2.75,1);
\draw[thick,fill] (2.90,2.35) arc (0:360:1.5mm);
\node at (3.25,2.35) {$b$};

\draw[thick,fill] (2.90,1.65) arc (0:360:1.5mm);
\node at (3.65,1.65) {$a^{+\infty}$};

\end{scope}

\end{tikzpicture}
    \caption{Spanning sets for $A(-)$ and $A(+)$ in  Example~\ref{ex:sofic-system-003}.}
    \label{sofic-0019}
\end{figure}
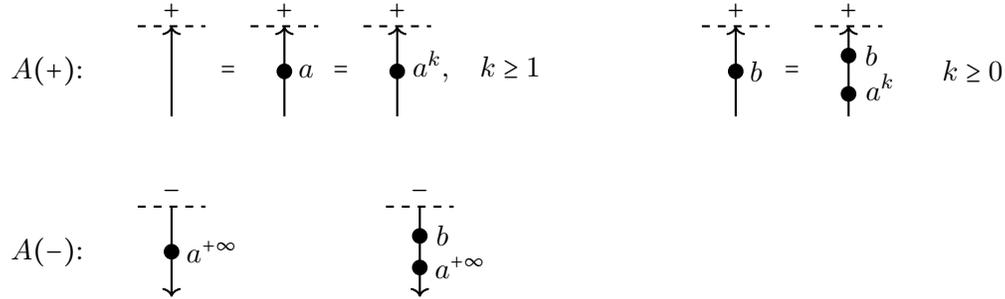

\begin{figure}
    \centering
\begin{tikzpicture}[scale=0.6]
    
\begin{scope}[shift={(0,0)}]

\draw[thick] (0,2) -- (3.5,2);
\draw[thick] (0,1) -- (3.5,1);
\draw[thick] (0,0) -- (3.5,0);

\draw[thick] (0,0) -- (0,2);
\draw[thick] (1.75,0) -- (1.75,2);
\draw[thick] (3.5,0) -- (3.5,2);

\node at (0.875,1.5) {$1$};
\node at (2.625,1.5) {$1$};
\node at (0.875,0.5) {$1$};
\node at (2.625,0.5) {$0$};

\node at (0.875,3.2) {$-$};
\draw[thick,dashed] (0.2,3.0) -- (1.55,3.0);
\draw[thick,->] (0.875,3.0) -- (0.875,2.1);
\draw[thick,fill] (0.975,2.6) arc (0:360:1.00mm);
\node at (1.60,2.6) {$a^{+\infty}$};

\node at (2.625,3.2) {$-$};
\draw[thick,dashed] (1.95,3.0) -- (3.3,3.0); 
\draw[thick,->] (2.625,3.0) -- (2.625,2.1);
\draw[thick,fill] (2.725,2.75) arc (0:360:1.00mm);
\node at (2.925,2.70) {$b$};
\draw[thick,fill] (2.725,2.40) arc (0:360:1.00mm);
\node at (3.45,2.40) {$a^{+\infty}$};

\node at (-0.5,2.2) {$+$};
\draw[thick,dashed] (-0.8,2.0) -- (-0.2,2.0);
\draw[thick,<-] (-0.5,2.0) -- (-0.5,1.2);

\node at (-0.5,0.97) {$+$};
\draw[thick,dashed] (-0.8,0.8) -- (-0.2,0.8); 
\draw[thick,<-] (-0.5,0.8) -- (-0.5,0.0);
\draw[thick,fill] (-0.40,0.35) arc (0:360:1.0mm);
\node at (-0.23,0.30) {$b$};
\end{scope}

\begin{scope}[shift={(8,0.75)}]
\node at (0.375,2.2) {$+$};
\draw[thick,dashed] (0,2) -- (0.75,2);
\draw[thick,<-] (0.375,2) -- (0.375,1.2);

\node at (1.85,1.65) {$+$};

\begin{scope}[shift={(1.25,0)}]
\node at (2.125,2.2) {$+$};
\draw[thick,dashed] (1.75,2) -- (2.50,2);
\draw[thick,<-] (2.125,2) -- (2.125,1.2);
\draw[thick,fill] (2.26,1.50) arc (0:360:1.25mm);
\node at (2.5,1.50) {$b$};
\end{scope}

\node at (4.85,1.65) {$=$};

\begin{scope}[shift={(2.25,0)}]
\node at (3.875,2.2) {$+$};
\draw[thick,dashed] (3.5,2) -- (4.25,2);
\draw[thick,<-] (3.875,2) -- (3.875,1.2);
\end{scope}

\end{scope}

\begin{scope}[shift={(8,-1)}]
\node at (0.375,2.2) {$-$};
\draw[thick,dashed] (0,2) -- (0.75,2);
\draw[thick,->] (0.375,2) -- (0.375,1.2);
\draw[thick,fill] (0.49,1.65) arc (0:360:1.25mm);
\node at (1.15,1.75) {$a^{+\infty}$};

\node at (1.85,1.65) {$+$};

\begin{scope}[shift={(1.25,0)}]
\node at (2.125,2.2) {$-$};
\draw[thick,dashed] (1.75,2) -- (2.50,2);
\draw[thick,->] (2.125,2) -- (2.125,1);
\draw[thick,fill] (2.265,1.70) arc (0:360:1.25mm);
\node at (2.55,1.70) {$b$};
\draw[thick,fill] (2.265,1.35) arc (0:360:1.25mm);
\node at (2.95,1.35) {$a^{+\infty}$};
\end{scope}

\node at (4.85,1.65) {$=$};

\begin{scope}[shift={(2.25,0)}]
\node at (3.875,2.2) {$-$};
\draw[thick,dashed] (3.5,2) -- (4.25,2);
\draw[thick,->] (3.875,2) -- (3.875,1.2);
\draw[thick,fill] (3.975,1.65) arc (0:360:1.25mm);
\node at (4.65,1.65) {$a^{+\infty}$};
\end{scope}
\end{scope}

\end{tikzpicture}
    \caption{For Example~\ref{ex:sofic-system-003}.  }
    \label{sofic-0020}
\end{figure}

\begin{figure}
    \centering
\begin{tikzpicture}[scale=0.6]
\begin{scope}[shift={(0.0,0)}, decoration={markings,mark=at position 0.5 with {\arrow{>}}}]
\draw[thick,dashed] (0,3) -- (3,3);
\node at (0.5,3.35) {$+$};
\node at (2.5,3.35) {$-$};
\draw[thick,postaction={decorate}] (2.5,3) .. controls (2.4,0) and (0.6,0) .. (0.5,3);

\node at (4,1.75) {$=$};
\end{scope}

\begin{scope}[shift={(5,0)}, decoration={markings,mark=at position 0.5 with {\arrow{>}}}]
\draw[thick,dashed] (0,3) -- (3,3);
\node at (0.5,3.35) {$+$};
\node at (2.5,3.35) {$-$};

\draw[thick,->] (0.5,0.5) -- (0.5,3);
\draw[thick,fill] (0.65,1.75) arc (0:360:1.5mm);
\node at (1.05,1.75) {$b$};

\draw[thick,->] (2.5,3) -- (2.5,0.5);
\draw[thick,fill] (2.65,1.75) arc (0:360:1.5mm);
\node at (3.2,1.75) {$a^{\infty}$};

\node at (4,1.75) {$+$};
\end{scope}

\begin{scope}[shift={(10,0)}, decoration={markings,mark=at position 0.5 with {\arrow{>}}}]
\draw[thick,dashed] (0,3) -- (3,3);
\node at (0.5,3.35) {$+$};
\node at (2.5,3.35) {$-$};

\draw[thick,->] (0.5,0.5) -- (0.5,3);

\draw[thick,->] (2.5,3) -- (2.5,0.5);
\draw[thick,fill] (2.7,1.33) arc (0:360:1.5mm);
\node at (3.3,1.33) {$a^{\infty}$};
\draw[thick,fill] (2.7,2.16) arc (0:360:1.5mm);
\node at (3.05,2.16) {$b$};

\end{scope}

\end{tikzpicture}
    \caption{Identity decomposition for  Example~\ref{ex:sofic-system-003}. }
    \label{sofic-0021}
\end{figure}
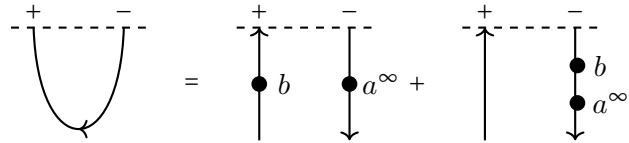 

\begin{figure}
    \centering
\begin{tikzpicture}[scale=0.6]
   \begin{scope}[shift={(0,0)}, decoration={markings,mark=at position 0.5 with {\arrow{>}}}]

\draw[thick] (1.5,0) rectangle (3,1.5);
\node at (2.25,0.75) {$\omega$};
\draw[thick,postaction={decorate}] (3,1) arc (45:-218:1);

\node at (4,0.30) {$=$};

\begin{scope}[shift={(0.5,0)}]

\draw[thick,<-] (7,0) arc (0:180:1); 
\draw[thick,fill] (5.25,0.50) arc (0:360:1.5mm);
\node at (4.7,0.80) {$b$};
\draw[thick,fill] (6.15,1.0) arc (0:360:1.5mm);
\node at (6,1.43) {$\omega$};
\draw[thick,fill] (6.85,0.75) arc (0:360:1.5mm);
\node at (7.5,1.07) {$a^{+\infty}$};

\node at (8.5,0.30) {$+$};
\end{scope}

\begin{scope}[shift={(1.5,0)}]
\draw[thick,<-] (11,0) arc (0:180:1); 
\draw[thick,fill] (9.25,0.50) arc (0:360:1.5mm);
\node at (8.7,0.80) {$\omega$};
\draw[thick,fill] (10.15,1.0) arc (0:360:1.5mm);
\node at (10,1.6) {$b$};
\draw[thick,fill] (10.85,0.75) arc (0:360:1.5mm);
\node at (11.57,1.07) {$a^{+\infty}$};
\end{scope}

\end{scope}

\begin{scope}[shift={(1.5,-3.5)}, decoration={markings,mark=at position 0.5 with {\arrow{>}}}]
\node at (-1.0,0.30) {$\alpha$};
\draw[thick] (-0.25,-0.75) .. controls (-0.50,-0.50) and (-0.50,1.50) .. (-0.25,1.75);

\draw[thick] (2,-0.75) .. controls (2.25,-0.50) and (2.25,1.50) .. (2,1.75);

\draw[thick] (0,0) rectangle (1.5,1.5);
\node at (0.75,0.75) {$\omega$};
\draw[thick,postaction={decorate}] (1.5,1) arc (45:-218:1);

\node at (2.65,0.30) {$=$};
\node at (3.20,0.32) {$1$};
\node at (4.15,0.30) {$\Leftrightarrow$};



\node at (4.15,0.30) {$\Leftrightarrow$};
\node at (8.1,0.30) {$\omega\in (a^*ab)^*a^* + (baa^*)^*$};

\node at (6.9,-1.20) {$L_\circ :=  (a^*ab)^*a^* + (baa^*)^*$};

\end{scope}

\end{tikzpicture}
    \caption{Computing the circular language in Example~\ref{ex:sofic-system-003}. }
    \label{sofic-0022}
\end{figure}

\end{example}

\begin{example}
Consider the B\"uchi automaton in Figure~\ref{sofic-0011} with the accepting set $Q_{\ac}=\{q_0\}$, see also Remark~\ref{remark_not_closed}. The $\omega$-language $L_{\h}$ is given by equation  \eqref{eq_lang_Q}. 
The associated TQFT has $\mcF(+)=\Bool q_0\oplus \Bool q_1$ and the identity decomposition 
\[
\id_+ = q_1\otimes q_1^{\ast}+q_0\otimes q_0^{\ast}=\langle  a | \otimes |(ab^2)^{\omega}\rangle + \langle ab | \otimes |b (ab^2)^{\omega} \rangle. 
\]
The circular language $L_{\circ}'$ of this TQFT is determined by the traces of $\omega\in \Sigma^{\ast}$ on $\mcF(+)$, also see equation \eqref{eq_L_circ}. A finite word $\omega\in L_{\circ}'$ if and only if any full string of consecutive $b$'s in it has even length (i.e., circular word $\omega$ has no subwords in $ab(b^2)^{\ast}a$). 

\end{example}

\begin{remark}
In the category $\mcCinfS$ defined in Section~\ref{subsec_accumulating} defects accumulate towards inner endpoints of both types: inward-oriented and outward-oriented endpoints. In the category $\mcC^{\h}_{\Sigma}$ defined in Section~\ref{subsec_omega} defects accumulate toward out-oriented endpoints only. In the the corresponding category considered in~\cite{IK-top-automata,GIKKL23} there is no defect accumulation at inner endpoints. In general, there are nine possible categories one can introduce, by independently specifying what is allowed at in-oriented and out-oriented inner endpoints: accumulation of defects, no accumulation of defects, or both.   

To build Boolean topological theories  for one of the nine corresponding categories one needs a suitable collection of evaluation functions for circles with finitely many defects and intervals with finitely or infinitely many defects and suitable accumulation conditions. Table~\ref{tab:nine} shows types of finite and infinite words involved in the corresponding evaluations. Likewise, to define a TQFT valued in the category of free $\Bool$-modules, one needs  suitable versions of automata and $\omega$-automata to account for various types of boundary behaviour at inner endpoints of cobordisms.  
\end{remark}

\begin{table}
    \centering
    \begin{tabular}{|c|c|c|c|}
    \hline 
     & right finite & right infinite  &  both  \\ 
    \hline 
      left finite &  $\Sigma^*$  & $\Sigma^\h$  & $\Sigma^* \sqcup \Sigma^\h$ \\
    \hline 
      left infinite  & $\Sigma^\t$  & $\Sigma^{\mathbb{Z}}$ & $\Sigma^\t \sqcup \Sigma^{\mathbb{Z}}$\\
    \hline 
     both   &  $\Sigma^* \sqcup \Sigma^\t$  &  $\Sigma^\h \sqcup \Sigma^{\mathbb{Z}}$  & $\Sigma^* \sqcup \Sigma^\h \sqcup  \Sigma^\t \sqcup \Sigma^{\mathbb{Z}}$ \\
    \hline         
    \end{tabular}
    \caption{Correspondence between pairs of positive and negative cardinality constraints and types of finite and infinite words.}
    \label{tab:nine}
\end{table} 




\bibliographystyle{amsalpha}

\bibliography{top-automata}

\newcommand{\etalchar}[1]{$^{#1}$}
\providecommand{\bysame}{\leavevmode\hbox to3em{\hrulefill}\thinspace}
\providecommand{\MR}{\relax\ifhmode\unskip\space\fi MR }
\providecommand{\MRhref}[2]{%
  \href{http://www.ams.org/mathscinet-getitem?mr=#1}{#2}
}
\providecommand{\href}[2]{#2}
\begin{thebibliography}{BBEP21}

\bibitem[Adl98]{adler1998symbolic}
Roy Adler, \emph{Symbolic dynamics and {M}arkov partitions}, Bulletin of the American Mathematical Society \textbf{35} (1998), no.~1, 1--56.

\bibitem[BBEP21]{BBE21}
Marie-Pierre B\'{e}al, Jean Berstel, Soren Eilers, and Dominique Perrin, \emph{Symbolic dynamics}, Handbook of automata theory. {V}ol. {II}. {A}utomata in mathematics and selected applications, EMS Press, Berlin, 2021, pp.~987--1030.

\bibitem[BC18]{BC_Toolbox17}
Mikolaj Boja\'nczyk and Wojciech Czerwi\'nski, \emph{An {A}utomata {T}oolbox}, Online Lecture Notes \href{https://www.mimuw.edu.pl/~bojan/upload/automata-toolbox.pdf}{https://www.mimuw.edu.pl/$\sim$bojan/upload/automata-toolbox.pdf} (2018), 1--273.

\bibitem[GH83]{GH83}
John Guckenheimer and Philip Holmes, \emph{Nonlinear oscillations, dynamical systems, and bifurcations of vector fields}, Applied Mathematical Sciences, vol.~42, Springer-Verlag, New York, 1983.

\bibitem[GIK{\etalchar{+}}23]{GIKKL23}
Paul Gustafson, Mee~Seong Im, Remy Kaldawy, Mikhail Khovanov, and Zachary Lihn, \emph{Automata and one-dimensional {TQFT}s with defects}, Lett. Math. Phys. \textbf{113} (2023), no.~93, 1--38.

\bibitem[IK22]{IK-top-automata}
Mee~Seong Im and Mikhail Khovanov, \emph{Topological theories and automata}, arXiv preprint \href{https://arxiv.org/abs/2202.13398}{arXiv:2202.13398} (2022), 1--70.

\bibitem[IK23]{IK_TQFTjourney23}
\bysame, \emph{From finite state automata to tangle cobordisms: a {TQFT} journey from one to four dimensions}, arXiv preprint \href{https://arxiv.org/abs/2309.00708}{arXiv:2309.00708}, to appear in Contemporary Mathematics (2023), 1--40.

\bibitem[IK24]{IK_22_linear}
\bysame, \emph{One-dimensional topological theories with defects: the linear case}, Contemporary Mathematics \textbf{791} (2024), 105--146.

\bibitem[IKO23]{IKV23}
Mee~Seong Im, Mikhail Khovanov, and Victor Ostrik, \emph{Universal construction in monoidal and non-monoidal settings, the {B}rauer envelope, and pseudocharacters}, arXiv preprint \href{https://arxiv.org/abs/2303.02696}{arXiv:2303.02696} (2023), 1--59.

\bibitem[IZ22]{IZ}
Mee~Seong Im and Paul Zimmer, \emph{One-dimensional topological theories with defects and linear generating functions}, Involve \textbf{15} (2022), no.~2, 319--331.

\bibitem[Kit98]{Kit98}
Bruce~P. Kitchens, \emph{Symbolic dynamics}, Universitext, Springer-Verlag, Berlin, 1998, One-sided, two-sided and countable state Markov shifts.

\bibitem[Kum]{Kumar_notes}
K.~Narayan Kumar, \emph{Notes on {A}utomata, {L}ogic, {G}ames and {A}lgebra}, Online lecture notes, \href{https://www.cmi.ac.in/~kumar/research.html}{https://www.cmi.ac.in/$\sim$kumar/research.html}.

\bibitem[LM21]{lind2021introduction}
Douglas Lind and Brian Marcus, \emph{An introduction to symbolic dynamics and coding}, second ed., Cambridge Mathematical Library, Cambridge University Press, Cambridge, 2021.

\bibitem[PP04]{PePi04}
Dominique Perrin and Jean-\'Eric Pin, \emph{Infinite {W}ords: {A}utomata, {S}emigroups, {L}ogic and {G}ames}, vol. 141, Pure and Applied Mathematics, Academic Press, 2004.

\bibitem[Sta83]{Sta83}
Ludwig Staiger, \emph{Finite-state {$\omega $}-languages}, J. Comput. System Sci. \textbf{27} (1983), no.~3, 434--448.

\bibitem[Sta97]{Sta97}
\bysame, \emph{{$\omega$}-languages}, Handbook of formal languages, {V}ol. 3, Springer, Berlin, 1997, pp.~339--387.

\bibitem[Wei73]{weiss1973subshifts}
Benjamin Weiss, \emph{Subshifts of finite type and sofic systems}, Monatshefte f{\"u}r Mathematik \textbf{77} (1973), no.~5, 462--474.

\bibitem[Wil04]{Williams04}
Susan~G. Williams, \emph{Introduction to symbolic dynamics}, Symbolic dynamics and its applications, Proc. Sympos. Appl. Math., vol.~60, Amer. Math. Soc., Providence, RI, 2004, pp.~1--11.

\bibitem[Wil21]{Wilke21}
Thomas Wilke, \emph{{$\omega$}-automata}, Chapter 6 in Handbook of {A}utomata {T}heory. {V}ol. {I}. {T}heoretical foundations, EMS Press, Berlin, 2021, Revised by Sven Schewe, pp.~189--234.

\end{thebibliography}

\end{document}